\DeclareMathOperator{\tors}{tors}
\DeclareMathOperator{\Mod}{Mod}
\DeclareMathOperator{\lep}{lep}
\theoremstyle{plain}
\newtheorem{thm}{Theorem}[section]
\newtheorem{cor}[thm]{Corollary}
\newtheorem{lem}[thm]{Lemma}
\newtheorem{prop}[thm]{Proposition}
\theoremstyle{definition}
\newtheorem{dfn}[thm]{Definition}
\newtheorem{obs}[thm]{Remark}
\newtheorem{ej}[thm]{Example}
\begin{document}

\title{On some operators and dimensions in modular meet-continuous lattices}

\author{Mauricio Medina B\'arcenas, Jos\'e R\'\i os Montes and Angel Zald\'\i var Corichi}

\address{Instituto de Matem\'aticas\\ 
UNIVERSIDAD NACIONAL AUTONOMA DE MEXICO \'Area de la Investigaci\'on Cient\'ifica, Circuito exterior, Ciudad Universitaria\\ 04510 M\'exico, D.F\\M\'exico}

\email{mmedina@matem.unam.mx\\ jrios@matem.unam.mx\\ zaldivar@matem.unam.mx}

\keywords{lattices,inflators, preradicals, radicals}

\def\xypic{\hbox{\rm\Xy-pic}}

\maketitle

\begin{abstract}
Given a complete modular meet-continuous lattice $A$, an inflator on $A$ is a monotone function $d\colon A\rightarrow A$such that $a\leq d(a)$ for all $a\in A$.  If $I(A)$ is the set of all inflators on $A$, then $I(A)$ is a complete lattice. Motivated by preradical theory we introduce two operators, the totalizer and the equalizer. We obtain some properties of these operators and see how they are related to the structure of the lattice $A$ and with the concept of dimension.
\keywords{lattices \and inflators \and preradicals \and radicals}

\end{abstract}

\section{Introduction}
\label{sec:sec1}
In the study of rings and module categories certain kind of lattices emerge naturally. These lattices control many behaviors that appear on rings and module categories, for instance, the lattices of submodules, lattices of classes of modules (specially hereditary torsion classes) and the lattices of preradicals. Most of these lattices have an extra operation, via this operation one can define interesting elements. For example it is known that in the case of the lattice of preradicals over an associative ring with unit, there is a product and a coproduct. The product of preradicals $\tau$ and $\sigma$ is just the composition $(\sigma\tau)(M)=\sigma(\tau(M))$ and the coproduct is the preradical $(\sigma\colon\tau)$ given by the submodule $(\sigma\colon\tau)(M)$ such that $(\sigma\colon\tau)(M)/\sigma(M)=\tau(M/\sigma(M))$ with $M$ a module. The authors in \cite{17} introduce for a preradical $\sigma$ two prearadicals associated to each one of these two operation respectively, these preradicals are the \emph{equalizer} of $\sigma$, the \emph{annihilator} of $\sigma$ (these two associated to the product); the \emph{co-equalizer} of $\sigma$ and \emph{the totalizer} of $\sigma$ (these associated to the coproduct). In particular for the coproduct the following comparison holds $\sigma\vee \tau\leq (\sigma\colon\tau)$ as in \cite{17} and \cite{18} this new preradicals are useful for describing certain kind of intervals of preradicals and characterize left exact preradicals in terms of some classical elements in a lattice for example pseudocomplements. Now starting with a complete modular meet-continuous lattice $A$ consider a monotone function $d\colon A\rightarrow A$ such that $a\leq d(a)$ for all $a\in A$ call this function an \emph{inflator} and let $I(A)$ be the set of all inflators on $A$. $I(A)$ is a complete lattice in the usual way and $I(A)$ have an extra operation namely the \emph{product} of inflators this product is the composition.
With his product $I(A)$ becomes a non-commutative ordered monoid and for two inflators $d$ and $k$ the comparison $d\vee k\leq dk, kd$ holds. Thus in some-way the product of inflators acts like the coproduct of preradicals. This analogy suggest most of the content in the paper. Now we give a brief organization of this: Section \ref{pre} is devoted to recall most of the concepts that are required for the rest of the investigation, in section \ref{sec:sec2} we introduce the \emph{equalizer} and the \emph{totalizer} of an inflator, we prove some properties of these and after that, we specialize in totalizers. Via this we give certain partitions in the set of inflators and we see that this correspond with some intervals of inflators and it is observed that these intervals are the key to give a characterization of \emph{strongly atomic} lattices in our sense. In section \ref{sec3}, we link the totalizers with the concept of dimension of an inflator. In section \ref{sec:sec4} we give some observations related with the uses of totalizers and some insights of the uses of equalizers. 

\section{Preliminaries}
\label{pre}

In this paper we assume that each lattice $(A,\leq,\vee,\wedge,\bar{1},\underline{0})$ is complete, it has a top $\bar{1}$ and a bottom $\underline{0}$, we write $\vee$ and $\wedge$ the binary supremum and infimum operations respectively, on $A$ and $\leq$ for the carried comparison. 
We deal with \emph{modular} lattices, that is, \[(a\vee c)\wedge b=a\vee(c\wedge b)\] for all $a,b,c\in A$ such that $a\leq b$. Now recall that a subset $X\subseteq A$ is directed if it is non-empty and for each $x,y\in X$ there is some $z\in X$ with $x\leq z$ and $y\leq z$.

A lattice $A$ is {\em meet-continuous} if 
\[a\wedge (\bigvee X)=\bigvee\{a\wedge x\mid x\in X\}\leqno({\rm IDL})\] 
for all $a\in A$ and $X\subseteq A$ any directed set; here, for a set $X$, $\bigvee X=\bigvee_{x\in X}x$; similarly for $\bigwedge X$. 
Following Simmons \cite{23} we call a complete, modular, meet continuous lattice $A$ an \emph{idiom}.
Two fundamental examples are the following: 
Given a ring $R$ and any left $R$-module $M$, the lattice $\text{Sub}_{R}(M)$ of all submodules of $M$ is modular and   meet-continuous, hence it is an idiom. Much of the analysis we will describe is inspired by this lattice. We remark that since the lattice of submodules of a given module in general is not a distributive lattice. 

An idiom is a distributive lattice precisely when it is a \emph{frame}, i.e., a complete lattice $A$ that satisfies
\[a\wedge (\bigvee X)=\bigvee\{a\wedge x\mid x\in X\}\leqno({\rm FDL})\]
for all $a\in A$ and $X\subseteq A$ any subset. It follows that each frame is an idiom.
Frames are the algebraic version of a topological space. Indeed if $S$ is a topological space then its topology, $\mathcal{O}(S)$ is a frame, these kind of frames have been extensively studied, for example see \cite{11} and \cite{13}.

Remember that in any lattice $A$ an \emph{implication} in $A$ is a two placed operation $(\_\succ \_)$ given by $x\leq (a\succ b)\Leftrightarrow x\wedge b\leq a$, for all $a,b\in A$. For a proof of the following fact, see \cite{24}.

\begin{prop}\label{03}
A complete lattice $A$ is a frame if and only if $A$ has an implication.
\end{prop}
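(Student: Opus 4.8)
The plan is to recognize the implication condition $x\le(a\succ b)\Leftrightarrow x\wedge b\le a$ as an adjunction: for each fixed $b\in A$, the map $(\_\wedge b)\colon A\to A$ is left adjoint to $(\_\succ b)\colon A\to A$. Since left adjoints preserve arbitrary joins, the existence of an implication should force exactly the frame distributive law (FDL), and conversely (FDL) should let us build the implication as a supremum. I would organize the argument around these two directions, each resting on one use or other of this adjunction.

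For the ``only if'' direction (frame $\Rightarrow$ implication), I would define $(a\succ b):=\bigvee\{x\in A\mid x\wedge b\le a\}$ and verify the defining biconditional. If $x\wedge b\le a$ then $x$ lies in the indexing set, so $x\le(a\succ b)$ immediately. Conversely, if $x\le(a\succ b)$, then $x\wedge b\le(a\succ b)\wedge b$; writing $S=\{y\mid y\wedge b\le a\}$ and applying (FDL) gives $(a\succ b)\wedge b=(\bigvee S)\wedge b=\bigvee\{y\wedge b\mid y\in S\}\le a$, since every $y\in S$ satisfies $y\wedge b\le a$. Hence $x\wedge b\le a$, as required. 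The only nontrivial input here is (FDL) itself, used to push the meet past the possibly infinite join.

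For the ``if'' direction (implication $\Rightarrow$ frame), I would fix $a\in A$ and $X\subseteq A$ and prove $a\wedge\bigvee X=\bigvee\{a\wedge x\mid x\in X\}$. One inequality is automatic in any complete lattice: each $a\wedge x\le a\wedge\bigvee X$, so $\bigvee\{a\wedge x\mid x\in X\}\le a\wedge\bigvee X$. For the reverse inequality, set $m=\bigvee\{a\wedge x\mid x\in X\}$. For each $x\in X$ we have $x\wedge a\le m$, hence by the implication property $x\le(m\succ a)$; taking the join over $x\in X$ yields $\bigvee X\le(m\succ a)$, and one more application of the implication property gives $(\bigvee X)\wedge a\le m$. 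This is exactly the missing inequality.

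The main obstacle, and really the conceptual heart, is the reverse direction, where full distributivity must be extracted from the mere existence of the operation. The key manoeuvre is using the biconditional twice: once on each member $x\in X$ to transfer the bound into the ``$\succ$'' form, where joins may be taken freely, and once on $\bigvee X$ to transfer back. I expect the ``only if'' direction to be essentially bookkeeping with (FDL), so the whole proof reduces to making the adjunction explicit and then applying it carefully in the harder direction.
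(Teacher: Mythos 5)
Your proof is correct and complete. Note that the paper itself does not prove Proposition~\ref{03} at all: it defers to Simmons's notes \cite{24}, so there is no in-paper argument to compare against. What you have written is the standard proof of this classical fact --- the observation that $(\_\wedge b)\dashv(\_\succ b)$ is a Galois connection, with (FDL) giving the implication as $\bigvee\{x\mid x\wedge b\le a\}$ in one direction, and the two-fold use of the biconditional (transfer each $x\in X$ into the $\succ$ side, take the join there, transfer back) recovering (FDL) in the other --- and both directions are carried out without gaps, including the only place where distributivity is genuinely needed.
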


It follows that in a frame $A$ any element $a\in A$ has a \emph{pseduocomplement} or \emph{negation} $(a\succ \underline{0})$ or simply $\neg a$.
For general background on idioms and frames, see \cite{28}  and \cite{30}.

An \emph{inflator} on an idiom $A$ is a function $d\colon A\rightarrow A$ such that $x\leq d(x)$ and $x\leq y \Rightarrow d(x)\leq d(y)$. 
A \emph{prenucleus} $d$ on $A$ is an inflator such that $d(x\wedge y)=d(x)\wedge d(y)$. 
A \emph{stable} inflator on $A$ is an inflator such that $d(x)\wedge y\leq d(x\wedge y)$ for all $x,y\in A$.   
Let us denote by $I(A)$ the set of all inflators on $A$ and let $P(A)$ be the set of all prenuclei and $S(A)$ the set of all stable inflators. Clearly, $P(A)\subseteq S(A)\subseteq I(A)$. 
Note that from the definition of inflator, the composition of any two inflators is again an inflator. $I(A)$ is a poset with the order given for $d,d'\in I(A)$ by $d\leq d'\Leftrightarrow d(a)\leq d'(a)$, for all $a\in A$. The identity of $A$, denoted by $d_{\underline{0}}$ and the constant function $\bar{d}(a)=\bar{1}$ for all $a\in A$, are inflators (in fact these two are prenuclei). 
 
For an arbitrary idiom $A$ and any subset $\mathcal{I}$ of inflators on $A$, the \emph{infimum} $\bigwedge\mathcal{I}$ of $\mathcal{I}$ is the function on $A$ given by $(\bigwedge\mathcal{I})(a)=\bigwedge\left\{f(a)\mid f\in\mathcal{I}\right\}$ for each $a\in A$. It is immediate that this function is again an inflator, and in fact it is the infimum of the family $\mathcal{I}$. Hence, the poset $I(A)$ is a complete lattice. Moreover:

\begin{enumerate}
\item If $\mathcal{I}\subseteq S(A)$, then $\bigwedge\mathcal{I}\in S(A)$.
\item If $\mathcal{I}\subseteq P(A)$, then $\bigwedge\mathcal{I}\in P(A)$.
\end{enumerate}  
The top of $I(A)$ is $\bar{d}$ and the bottom is $d_{\underline{0}}$. 
For an idiom $A$ and a non-empty subset $\mathcal{S}\subseteq I(A)$, the \emph{supremum} of $\mathcal{S}$ is the function given by $(\bigvee\mathcal{S})(a)=\bigvee\left\{f(a)\mid f\in\mathcal{S}\right\}$, for each $a\in A$. Also, if  $\mathcal{S}$ is directed, then: 

\begin{enumerate}
 \setcounter{enumi}{2}
\item If $\mathcal{S}\subseteq S(A)$, then $\bigvee\mathcal{S}\in S(A)$.
\item If $\mathcal{S}\subseteq P(A)$, then $\bigvee\mathcal{S}\in P(A)$.
\end{enumerate}
We require that $\mathcal{S}\neq\emptyset$ because the supremum of the empty set is not an inflator.  
For two inflators $d$ and $d'$ on $A$ and for all $a\in A$ we have $a\leq d(a)$. From the monotonicity of $d'$ it follows that $d'(a)\leq d'(d(a))$ and we have that $d(a)\leq dd'((a))$. So,  for any two inflators $d,d'\in I(A)$ we have that $d'\vee d\leq d'd$. It follows that if  $A$ is an idiom and $d$, $d'$, $k$ are inflators on $A$, then:

\begin{enumerate}
 \setcounter{enumi}{4}
\item If $d\leq d'$, then $kd\leq kd'$ and $dk\leq d'k$.
\item $kd'\vee kd\leq k(d'\vee d)$ and $ k(d'\wedge d)\leq kd'\wedge kd$. 
\item Moreover, if $\mathcal{D}\subseteq I(A)$ is non-empty, then:\label{item7}
\begin{itemize}
\item $(\bigvee \mathcal{D})k=\bigvee\{dk\mid d\in \mathcal{D}\}$,
\item $(\bigwedge \mathcal{D})k=\bigwedge\{dk\mid d\in \mathcal{D}\}$.
\end{itemize}
\end{enumerate}

Given an inflator $d\in I(A)$, let $d^{0}:=d_{\underline{0}}$,  $d^{\alpha+1}:=d\circ d^{\alpha}$ for a non-limit ordinal $\alpha$, and let  $d^{\lambda}:=\bigvee\{d^{\alpha}\mid \alpha<\lambda\}$ for a limit ordinal $\lambda$.
These are inflators, and from the comparison $d\vee d'\leq dd'$, we have a chain of inflators 
\[d\leq d^{2}\leq d^{3}\leq\ldots \leq d^{\alpha}\leq\ldots.\] 
By a cardinality argument, there exists an ordinal $\gamma$ such that $d^{\alpha}=d^{\gamma}$, for $\alpha\geq \gamma$. In fact we can choose the least of these ordinals, say $\infty$. Thus, $d^{\infty}$ is an inflator such that $d\leq d^{\infty}$ but more important, this inflator satisfies $d^{\infty}d^{\infty}=d^{\infty}$, that is, $d^{\infty}$ is an idempotent or a closure operator on $A$. Denote by $C(A)$ the set of all closure operators. This is a poset in which the infimum of a set of closure operators is again a closure operator. Hence, it is a complete lattice, and   the construction we have just made defines an operator $(\_)^{\infty}\colon I(A)\rightarrow C(A)$. It is clear that this operator (in the second level) is inflatory and monotone. The supremum  in $C(A)$ of an arbitrary family of closure operators can be computed as follows:  first, take a non-empty subset of closure operators $\mathcal{C}$ 
on $A$.  The supremum, $\bigvee \mathcal{C}$ is an inflator so we can apply the construction above and obtain a closure operator $(\bigvee \mathcal{C})^{\infty}$;  this is the supremum of the family $\mathcal{C}$ in $C(A)$.  For an inflator $j$ on an idiom $A$, we say that $j$ is a \emph{nucleus} if it is an idempotent pre-nucleus. By induction and idiom distributivity it follows that if $A$ is an idiom, then: 
\begin{enumerate}
 \setcounter{enumi}{7}
\item If $f$ is a pre-nucleus on $A$, then $f^{\alpha}$ is a pre-nucleus for all ordinal $\alpha$; in particular $f^{\infty}$ is a nucleus.
\item If $f$ is a stable inflator, then each $f^{\alpha}$ is a stable for all ordinal $\alpha$, and for limit ordinals $\lambda$, $f^{\lambda}$ is a pre-nucleus; in particular $f^{\infty}$ is a nucleus.
\end{enumerate}

The set $N(A)$ of all nuclei on $A$ is a poset, and the infimum of a family of nuclei is again a nucleus; thus it is a complete lattice. For the supremum of any family of nuclei we require the constructions above and item 4 in the list before: first we take a family of nuclei $\mathcal{N}$, then we consider the family of all compositions of elements of $\mathcal{N}$, say $\mathcal{N}^{\circ}$; this is a direct family, hence $\bigvee \mathcal{N}^{\circ}$ is the supremum in $P(A)$;  then we apply the $\infty$ construction, and this new inflator $(\bigvee \mathcal{N}^{\circ})^{\infty}$ is a nucleus. In fact, it is the supremum of the family $\mathcal{N}$. 
 

Each nucleus $j$ determines a quotient of $A$ given by the set of fixed points of $j$ in $A$, that is, $A_{j}=\{x\in A\quad j(x)=x\}$. 
There is a surjective idiom morphism $j^{*}\colon A\rightarrow A_{j}$ given by $j^{*}(a)=j(a)$.  When the idiom $A$ is a frame then $A_{j}$ is a frame (in this way one constructs sublocales of a locale see \cite{11}).
We will need the following:
\begin{thm}\label{014}
For any idiom $A$, the complete lattice $N(A)$ is a frame.
\end{thm}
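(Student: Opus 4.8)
The plan is to use the standard criterion that a complete lattice is a frame exactly when binary meet distributes over arbitrary joins; since we already know $N(A)$ is a complete lattice, it remains only to verify, for a nucleus $j$ and an arbitrary family $\mathcal{K}\subseteq N(A)$, the frame distributive law
\[
j\wedge\bigvee\mathcal{K}=\bigvee\{\,j\wedge k\mid k\in\mathcal{K}\,\}.
\]
The inequality $\geq$ is automatic from monotonicity of $\wedge$, so everything rests on $\leq$. The difficulty is that meets in $N(A)$ are computed pointwise (infima of nuclei are pointwise), whereas joins are \emph{not}: by the construction recalled above, $\bigvee\mathcal{K}=(\bigvee\mathcal{K}^{\circ})^{\infty}$ involves the transfinite $(\_)^{\infty}$ iteration. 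Thus the main obstacle is to control this iterated join after meeting with the fixed nucleus $j$. My strategy is to translate the inequality into the language of fixed points, where both sides become transparent.

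Two facts about fixed points drive the argument. First, for any nucleus $N$ the set $A_{N}$ is closed under arbitrary meets (it contains $\bar 1$ and is $\wedge$-closed because $N$ preserves $\wedge$), and $N(a)$ is the least fixed point above $a$, that is, $N(a)=\bigwedge\{x\in A_{N}\mid a\leq x\}$. Second, for a family of nuclei the join corresponds to the intersection of fixed-point sets, $A_{\bigvee\mathcal{K}}=\bigcap_{k\in\mathcal{K}}A_{k}$ (one inclusion is immediate from $k\leq\bigvee\mathcal{K}$; the other holds because an element fixed by every $k$ is fixed by every composite, hence by $(\bigvee\mathcal{K}^{\circ})^{\infty}$). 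Writing $q=\bigvee\{j\wedge k\mid k\in\mathcal{K}\}$ and using that $j\wedge k$ is the pointwise meet, these give $A_{q}=\bigcap_{k}\{x\mid j(x)\wedge k(x)\leq x\}$ and $q(a)=\bigwedge\{x\in A_{q}\mid a\leq x\}$. Hence to prove $j(a)\wedge(\bigvee\mathcal{K})(a)\leq q(a)$ it suffices to show $j(a)\wedge(\bigvee\mathcal{K})(a)\leq x$ for every $x\in A_{q}$ with $a\leq x$; and since $a\leq x$ forces $j(a)\leq j(x)$ and $(\bigvee\mathcal{K})(a)\leq(\bigvee\mathcal{K})(x)$, it is enough to prove the pointwise statement
\begin{equation*}
j(x)\wedge k(x)\leq x\ \text{ for all }k\in\mathcal{K}\quad\Longrightarrow\quad j(x)\wedge(\textstyle\bigvee\mathcal{K})(x)\leq x. \tag{$\star$}
\end{equation*}

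To establish $(\star)$, fix such an $x$, put $c=j(x)$, and consider the set $T=\{z\in A\mid x\leq z,\ c\wedge z\leq x\}$. I will show that $T$ contains $x$, is closed under each $k\in\mathcal{K}$, and is closed under directed joins; since $(\bigvee\mathcal{K})(x)=(\bigvee\mathcal{K}^{\circ})^{\infty}(x)$ is built from $x$ by precisely these two operations, it must then lie in $T$, which is exactly the conclusion $c\wedge(\bigvee\mathcal{K})(x)\leq x$. Membership of $x$ is the hypothesis. For closure under $k$, the prenucleus identity does the work: if $z\in T$ then, using $c\leq k(c)$ and $k(c)\wedge k(z)=k(c\wedge z)$, one gets $c\wedge k(z)\leq k(c)\wedge k(z)=k(c\wedge z)\leq k(x)$, whence $c\wedge k(z)\leq c\wedge k(x)\leq x$ by the hypothesis of $(\star)$, so $k(z)\in T$. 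For closure under directed joins one invokes meet-continuity (IDL): if $\{z_{s}\}\subseteq T$ is directed then $c\wedge\bigvee z_{s}=\bigvee(c\wedge z_{s})\leq x$.

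I expect the delicate point to be exactly the passage through composites inside the $\infty$-iteration: a priori the hypothesis bounds $c\wedge k(x)$ only for the generators $k\in\mathcal{K}$, not for their composites nor for the intermediate values $p^{\alpha}(x)$ produced along the way. The device that dissolves this obstacle is the set $T$ with its two closure properties, which make a routine transfinite induction on $p^{\alpha}(x)$ go through: the successor stage $p^{\alpha+1}(x)=\bigvee\{k(p^{\alpha}(x))\mid k\in\mathcal{K}^{\circ}\}$ stays in $T$ because $T$ is stable under the members of $\mathcal{K}$ (hence under all composites) and under the directed join, while limit stages are themselves directed joins. It is worth noting that only the prenucleus property of the elements of $\mathcal{K}$ and the meet-continuity of $A$ enter this argument. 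Finally, via Proposition \ref{03} the distributive law just proved is equivalent to the existence of an implication on $N(A)$, so the theorem simultaneously equips $N(A)$ with a negation and the attendant Heyting structure.
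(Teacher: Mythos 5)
Your proof is correct --- I checked each step: the fixed-point facts ($A_{N}$ is closed under arbitrary meets, $N(a)$ is the least fixed point above $a$, and $A_{\bigvee\mathcal{K}}=\bigcap_{k}A_{k}$), the reduction of the frame law to the pointwise implication $(\star)$, and the verification of $(\star)$ via the auxiliary set $T$, which is indeed stable under each $k\in\mathcal{K}$ (by the prenucleus identity), hence under all composites, and under directed joins (by (IDL)), so the transfinite induction does reach $(\bigvee\mathcal{K}^{\circ})^{\infty}(x)$. (Two cosmetic slips that do not affect correctness: closure of $A_{N}$ under \emph{arbitrary} meets follows from monotonicity alone, since the prenucleus identity only handles binary meets; and $x\in T$ is trivial rather than ``the hypothesis'', which in fact enters in the closure-under-$k$ step.) However, your route is genuinely different from the one the paper relies on: the paper does not prove Theorem \ref{014} at all, but quotes it from \cite{11}, \cite{24}, \cite{26}, and the proof it points to --- via the machinery recalled immediately after the statement --- identifies $N(A)$ with the lattice $\EuScript{D}(A)$ of division sets, exhibited as the quotient (fixed-point set) of the nucleus $\EuScript{D}vs$ on the base frame $\EuScript{B}(A)$ of intervals; since a nucleus-quotient of a frame is a frame, $N(A)$ is a frame. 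That route costs the whole interval/base-frame apparatus, but it yields the isomorphism $\EuScript{D}(A)\cong N(A)$ as a by-product, which the paper needs anyway (e.g.\ to relate $soc^{\infty}$ to $\EuScript{S}\EuScript{A}$). Your argument is self-contained and more elementary, working entirely inside $A$ with nuclei; notably it uses only the prenucleus law and meet-continuity and never invokes modularity, so it actually establishes the theorem for every complete meet-continuous lattice, not just for idioms.
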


For more details, we refer the reader to \cite{11}, \cite{24} and \cite{26}.

There is another approach to congruences for an idiom, we will recall some of that material. In \cite{23} and \cite{27} the author describes a \emph{module theoretic like} technique to construct inflators, stable inflators, prenuclei and nuclei, via the \emph{base frame} of the idiom $A$.
To construct the base frame of an idiom $A$ one needs the set of all intervals of $A$, that is, given $a,b\in A$ such that $a\leq b$, the \emph{interval} $[a,b]$ is the set $[a,b]=\{x\in A\mid a\leq x \leq b\}$.  Denote by $\EuScript{I}(A)$ the set of all intervals of $A$. Now to compare two intervals $I, J$, we say that $I$ is a \emph{subinterval} of $J$, denoted by $I\rightarrow J$,  if $I\subseteq J$, that is, if $I=[a,b]$ and $J=[a',b']$ with $a'\leq a\leq b\leq b'$ in $A$. We say that $J$ and $I$ are \emph{similar}, denoted by  $J\sim I$, if there are $l,r\in A$ with associated intervals \[L=[l,l\vee r]\quad [l\wedge r,r]=R\] where $J=L$ and $I=R$ or $J=R$ and $I=L$. Clearly, this a reflexive and symmetric relation. Moreover, if $A$ is modular, this relation is just the canonical lattice isomorphism between $L$ and $R$. Now we are going to impose some closure proprieties in sets of intervals:

We say that a set of intervals $\mathcal{A}\subseteq {\EuScript I}(A)$ is \emph{abstract} if is not empty and it is closed under $\sim$, that is, 
\[J\sim I\in\mathcal{A}\Rightarrow J\in\mathcal{A}.\] 
An abstract set $\mathcal{B}$ is a \emph{basic} set of intervals if it is closed by subintervals, that is, 
\[J\rightarrow I\in\mathcal{B}\Rightarrow J\in\mathcal{B}\] 
for all intervals $I,J$. A set of intervals $\mathcal{C}$ is a \emph{congruence} set if it is basic and closed under abutting intervals, that is, 
\[[a,b][b,c]\in \mathcal{C}\Rightarrow [a,c]\in\mathcal{C}\]
for elements $a,b,c\in A$.  A basic set of intervals $\mathcal{B}$ is a \emph{pre-division} set if \[\forall x\in X[a,x]\in\mathcal{B}\Rightarrow [a,\bigvee X]\in\mathcal{B}\] for each $a\in A$ and $X\subseteq [a,\bar{1}]$. A set of intervals $\mathcal{D}$ is a \emph{division} set if it is a congruence set and a pre-division set.
Put $\EuScript{D}(A)\subseteq\EuScript{C}(A)\subseteq\EuScript{B}(A)\subseteq\EuScript{A}(A)$ the set of all division, congruence, basic and abstract set of intervals in $A$. 

Note that $\EuScript{B}(A)$ is closed under arbitrary intersections and unions, hence it is a frame. The top of this frame is $\EuScript{I}(A)$ and the bottom is the set of all trivial intervals of $A$, denoted by $\EuScript{O}(A)$ or simply by $\EuScript{O}$. The frame $\EuScript{B}(A)$ is the \emph{base} frame of the idiom $A$. 

The family $\EuScript{C}(A)$ is closed under arbitrary intersections, but suprema are not unions; to correct this we take in any basic set $\mathcal{B}$ the least congruence set that contains it. From this one can show that $\EuScript{C}(A)$ is a frame. 

For the set $\EuScript{D}(A)$ and for any $\mathcal{B}\in\EuScript{B}(A)$ we can describe the least division set that contains it. Since $\EuScript{D}(A)$ is closed under arbitrary intersections, denote by $\EuScript{D}vs(\mathcal{B})$ that division set that contains it. In \cite{23} it is proved that $\EuScript{D}vs(\mathcal{B})$ is a nucleus on $\EuScript{B}(A)$ and the quotient of this nucleus is $\EuScript{D}(A)$. In fact, there is a relation with this frame and the frame $N(A)$: To describe this relation, take any basic set $\mathcal{B}$ and $a\in A$; define $|\mathcal{B}|(a)=\bigvee X$, where $x\in X\Leftrightarrow [a,x]\in\mathcal{B}$. This produces the associated inflator of $\mathcal{B}$. Moreover, if the basic set $\mathcal{B}$ is a congruence set, then $|\mathcal{B}|$ is a pre-nucleus in $A$, and if it is a division set, then $|\mathcal{B}|$ is a nucleus. In this way we have  for every division set  a nucleus. Now, given a nucleus $j$ we can construct a division set $[a,b]\in\mathcal{D}_{j}\Leftrightarrow j(a)=j(b)$.  This correspondences are bijections and moreover they define an isomorphism between $\EuScript{D}(A)$ and $N(A)$. The details of all these  are in \cite{23}, and a more recent account is given in \cite{27} and \cite{28}. 

There are interesting examples of all this: An interval $[a,b]$ is \emph{simple} if there is no $a< x< b$ that is $[a,b]=\left\{a,b\right\}$. Denote by $\EuScript{S}mp$ be the set of all simple intervals. An interval $[a,b]$ of $A$ is \emph{complemented} if it is a complemented lattice, that is, for each $a\leq x\leq b$ there exist $a\leq y\leq b$ such that $a=x\wedge y$ and  $b=x\vee y$.  Let $\EuScript{C}mp$ be the set of all complemented intervals. In fact, for every $\mathcal{B}$ we can define $\EuScript{S}mp(\mathcal{B})$ and $\EuScript{C}mp(\mathcal{B})$: the former is the set of intervals that are $\mathcal{B}$-simple, that is, the set of all $[a,b]$ such that for each $a\leq x\leq b$, $[a,x]\in\mathcal{B}$ or $[x,b]\in\mathcal{B}$, and the latter is the set of all intervals that are $\mathcal{B}$-complemented, that is, $[a,b]$ such that for every $a\leq x\leq b$ exists $a\leq y\leq b$ such that $[a,x\wedge y]\in\mathcal{B}$ and $[x\vee y,b]\in\mathcal{B}$.  With this, we have that $\EuScript{S}mp=\EuScript{S}mp(\EuScript{O})$ and $\EuScript{C}mp=\EuScript{C}mp(\EuScript{O})$.

In fact one can generalize the above sets of intervals: Given any $\mathcal{B}\in\EuScript{B}(A)$ denote by $\EuScript{C}rt(\mathcal{B})$ the set of intervals such that for all $a\leq x\leq b$ we have $a=x$ or $[x,b]\in\mathcal{B}$; this is the set of all $\mathcal{B}$-\emph{critical} intervals. Note that $\EuScript{S}mp(\EuScript{O})=\EuScript{C}rt(\EuScript{O})$ and for any $\mathcal{B}\in\EuScript{B}(A)$, $\EuScript{C}rt(\mathcal{B})\leq \EuScript{S}mp(\mathcal{B})$.

We are interested in the \emph{simple like} intervals: 

A interval $[a,b]$ is \emph{atomic} if for each $a<d\leq b$ exists $a<z\leq d$ such that $[a,z]\in\EuScript{S}mp$.
A interval $[a,b]$ is \emph{strongly atomic} if each subinterval $a\leq c< d\leq b$ is atomic, that is, exists $c<z\leq d$ with $[c,z]\in\EuScript{S}mp$ and we say that the idiom $A$ is \emph{strongly atomic} if each interval is strongly atomic. Let $\EuScript{S}\EuScript{A}$ be the set of all strongly atomic intervals in $A$.

Now if we set $soc=|\EuScript{S}mp|$ and $cdb=|\EuScript{C}mp|$ call this inflators the \emph{socle derivative} and the \emph{Cantor-Bendixson derivative} of the idiom $A$, this are the fundamental inflators associated to every idiom and one can see that $soc$ and $cbd$ are stable inflators on $A$ (6.13 and 6.17 of \cite{27}), even more if the idiom $A$ is a frame the $cdb$ it is know that this inflator is a pre-nucleus. We know that $soc^{\infty}$ is a nucleus so by the assignment mentioned before, it corresponds a division set $\mathcal{D}\in\EuScript{D}(A)$, this division set is exactly the set $\EuScript{S}\EuScript{A}$ (7.11 of \cite{27}).

%

\section{Operators in $I(A)$}
\label{sec:sec2}

In \cite{17} the authors introduce four operators in the (big) lattice $R$-pr. They associate two operators to the product of preradicals $\tau\cdot \sigma$, and two operators to the coproduct of preradicals $(\tau:\sigma)$. For a general lattice $A$ we introduce two operators, $t(\_)$ and $e(\_)$ on $I(A)$.

\begin{dfn}\label{i1}
Let $d\in I(A)$ an inflator we define:
\[\mathcal{I}_{e}(d)=\{z\in I(A)\mid zd=d\}\] and \[\mathcal{I}_{t}(d)=\{z\in I(A)\mid zd=\bar{d}\}\]
and we call them the set of \emph{equalizers} of $d$ and the set of \emph{totalizers} of $d$, respectively. Note that $\mathcal{I}_{e}(d)$ and $\mathcal{I}_{t}(d)$ are non-empty, since $d_{\underline{0}}\in\mathcal{I}_{e}(d)$ and $\bar{d}\in \mathcal{I}_{t}(d)$. We can consider
\[\bigvee \mathcal{I}_{e}(d):=e(d)\] and \[\bigwedge \mathcal{I}_{t}(d):=t(d)\] 
 and we call these inflators the \emph{equalizer} of $d$, and the \emph{totalizer} of $d$, respectively. Note that from the definition and item \ref{item7} of Section \ref{sec:sec2}, we have that $t(d)d=\bar{d}$ and $e(d)d=d$. 
\end{dfn}

\begin{lem}\label{i2}
Let $A$ be any idiom, $d,d'\in I(A)$ any two inflators, and $\mathcal{I}$ a non-empty family of inflators.  Then:

\begin{enumerate}
\item If $d\leq d'$ then $t(d')\leq t(d)$.

\item $t(d_{\underline{0}})=\bar{d}$ and $t(\bar{d})=d_{\underline{0}}$.

\item $t(\bigvee \mathcal{I})\leq \bigwedge \{t(d)\mid d\in\mathcal{I}\}$.

\item $t(\bigwedge \mathcal{I})\geq \bigvee\{ t(d)\mid d\in\mathcal{I}\}$.

\item $e(d)$ is an idempotent inflator.

\item $e(d)\leq d$.

\item $e(d)=d$ if and only if $d$ is idempotent.
\end{enumerate}
\end{lem}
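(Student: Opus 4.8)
The plan is to split the seven claims into the totalizer statements (1)--(4), which all flow from one monotonicity observation, and the equalizer statements (5)--(7), which all flow from the remark following Definition \ref{i1} that $e(d)d=d$. For (1) I would first establish the set inclusion $\mathcal{I}_t(d)\subseteq\mathcal{I}_t(d')$ whenever $d\le d'$: if $z\in\mathcal{I}_t(d)$, i.e. $zd=\bar d$, then left-monotonicity of the product gives $\bar d=zd\le zd'\le\bar d$, so $zd'=\bar d$ and $z\in\mathcal{I}_t(d')$. Since a larger set has a smaller infimum, $t(d')=\bigwedge\mathcal{I}_t(d')\le\bigwedge\mathcal{I}_t(d)=t(d)$. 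Parts (3) and (4) are then formal consequences of (1): from $d\le\bigvee\mathcal{I}$ for each $d\in\mathcal{I}$ we get $t(\bigvee\mathcal{I})\le t(d)$ for all such $d$, hence $t(\bigvee\mathcal{I})\le\bigwedge\{t(d)\mid d\in\mathcal{I}\}$; dually, from $\bigwedge\mathcal{I}\le d$ we get $t(d)\le t(\bigwedge\mathcal{I})$ for all $d$, hence $\bigvee\{t(d)\mid d\in\mathcal{I}\}\le t(\bigwedge\mathcal{I})$.

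For (2) I would compute the two totalizer sets directly from the description of $d_{\underline{0}}$ and $\bar d$. Since $d_{\underline{0}}$ is the identity, $zd_{\underline{0}}=z$, so the equation $zd_{\underline{0}}=\bar d$ forces $z=\bar d$; thus $\mathcal{I}_t(d_{\underline{0}})=\{\bar d\}$ and $t(d_{\underline{0}})=\bar d$. Since $\bar d$ is the constant map at the top and every inflator fixes $\bar 1$, we have $z\bar d(a)=z(\bar 1)=\bar 1=\bar d(a)$ for every inflator $z$ and every $a$; hence $\mathcal{I}_t(\bar d)=I(A)$ and $t(\bar d)=\bigwedge I(A)=d_{\underline{0}}$.

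The equalizer claims are where the content sits. The pivotal prior fact is that $e(d)\in\mathcal{I}_e(d)$, i.e. $e(d)d=d$, so $e(d)$ is the \emph{largest} element of $\mathcal{I}_e(d)$. For (6) I would note that for any $z\in\mathcal{I}_e(d)$, monotonicity of $z$ together with $a\le d(a)$ gives $z(a)\le z(d(a))=d(a)$, whence $z\le d$; taking the supremum over $\mathcal{I}_e(d)$ yields $e(d)\le d$. For (5), the inequality $e(d)\le e(d)e(d)$ is the general comparison $k\vee k\le kk$ applied to $k=e(d)$. The reverse inequality is the one genuinely non-formal step: I would verify that $e(d)e(d)$ again belongs to $\mathcal{I}_e(d)$, since by associativity of composition $\bigl(e(d)e(d)\bigr)d=e(d)\bigl(e(d)d\bigr)=e(d)d=d$; therefore $e(d)e(d)\le\bigvee\mathcal{I}_e(d)=e(d)$, and combining the two inequalities gives idempotency.

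Finally (7) just packages the previous two items together with the observation that $d$ is idempotent exactly when $d\in\mathcal{I}_e(d)$: if $d$ is idempotent then $d\le\bigvee\mathcal{I}_e(d)=e(d)$, which with (6) forces $e(d)=d$; conversely, if $e(d)=d$ then $d$ is idempotent by (5). The main obstacle, such as it is, is the closure of $\mathcal{I}_e(d)$ under the product $e(d)e(d)$ used in (5); this is what upgrades the trivial inequality $e(d)\le e(d)e(d)$ to an equality, and it is the only place where more than bare monotonicity and the identities $e(d)d=d$, $t(d)d=\bar d$ is needed.
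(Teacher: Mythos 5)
Your proposal is correct and follows essentially the same route as the paper: for the equalizer items (5)--(7) you use the identical associativity computation $[e(d)e(d)]d=e(d)[e(d)d]=e(d)d=d$ to get $e(d)e(d)\in\mathcal{I}_{e}(d)$, the same observation that every $z\in\mathcal{I}_{e}(d)$ satisfies $z\leq d$, and the same characterization of idempotents via $d\in\mathcal{I}_{e}(d)$. The only difference is that you write out the totalizer items (1)--(4), which the paper dismisses as straightforward; your arguments there (the inclusion $\mathcal{I}_{t}(d)\subseteq\mathcal{I}_{t}(d')$ for $d\leq d'$, and the explicit computation of $\mathcal{I}_{t}(d_{\underline{0}})$ and $\mathcal{I}_{t}(\bar{d})$) are correct.
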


\begin{proof}
The properties concerning the totalizers are straightforward.

For 5, we have $e(d)\leq e(d)e(d)$ and $[e(d)e(d)]d=e(d)[e(d)d]=e(d)d=d$, that is, $e(d)e(d)\leq e(d)$. Now note that for each $z\in\mathcal{I}_{e}(d)$, from the comparison of supremum and the product of inflators, we have that  
$z\leq d$. 

Finally, 6 and 7 follow directly from 5 and the fact that if $d$ is idempotent then $d^{2}=d$, and so $d\leq e(d)$.

\end{proof}

\begin{obs}\label{remark6}
Notice that the operator $e:I(A)\rightarrow C(A)$ and $t:I(A)\rightarrow I(A)$, is not monotone.
\end{obs}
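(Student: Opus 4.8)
The statement is a failure-of-monotonicity claim, so the natural route is to produce explicit counterexamples: a single idiom $A$ carrying comparable inflators $d\leq d'$ for which $e(d)\not\leq e(d')$, together with a pair witnessing the same failure for $t$. The plan is to work in the smallest idiom where such behaviour can already be detected, namely a finite chain, where $I(A)$ is finite and every product $zd$ can be evaluated pointwise by hand. Recall that a finite chain is distributive, hence a frame, hence an idiom, so any counterexample found there is legitimate.

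For $t$ almost no work is needed, and I would dispose of it first. By Lemma \ref{i2}(1) the assignment $t$ is order-reversing, and by Lemma \ref{i2}(2) we have $t(d_{\underline 0})=\bar d$ and $t(\bar d)=d_{\underline 0}$. In any nontrivial idiom ($\underline 0\neq\bar 1$) one has $d_{\underline 0}<\bar d$ strictly, yet $t(d_{\underline 0})=\bar d\not\leq d_{\underline 0}=t(\bar d)$; this single comparison already shows $t$ is not monotone. So for the totalizer the argument reduces to citing Lemma \ref{i2}.

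For $e$ I would take the three-element chain $\underline 0<m<\bar 1$. Writing an inflator $z$ as the pair $(z(\underline 0),z(m))$ (its value at $\bar 1$ being forced to equal $\bar 1$), the monotone inflationary maps are exactly $d_{\underline 0}=(\underline 0,m)$, $d_1=(\underline 0,\bar 1)$, $d_2=(m,m)$, $d_3=(m,\bar 1)$ and $\bar d=(\bar 1,\bar 1)$. I would then read off the equalizer index sets directly from Definition \ref{i1}: membership $z\in\mathcal{I}_e(d)$ forces $z$ to fix every value in the image of $d$. This gives $\mathcal{I}_e(d_1)=\{d_{\underline 0},d_1\}$ (the only constraint being $z(\underline 0)=\underline 0$) and $\mathcal{I}_e(d_3)=\{d_{\underline 0},d_2\}$ (the only constraint being $z(m)=m$). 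Taking pointwise suprema yields $e(d_1)=d_1=(\underline 0,\bar 1)$ and $e(d_3)=d_2=(m,m)$. Since $d_1\leq d_3$ but $(\underline 0,\bar 1)\not\leq(m,m)$ (the second coordinate fails, as $\bar 1\not\leq m$), this exhibits $e(d_1)\not\leq e(d_3)$ and proves that $e$ is not monotone; note that $d_1$ and $d_2$ are genuinely idempotent, so these values indeed lie in $C(A)$ and the incomparability is measured in the codomain $C(A)$ as well.

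There is no genuine obstacle in this argument; the only points requiring care are pure bookkeeping: verifying that the five listed maps exhaust $I(A)$, and that each pointwise supremum $\bigvee\mathcal{I}_e(d)$ is computed correctly and really equals the claimed inflator. The one conceptual remark I would make is that for $e$ the failure is an incomparability of values rather than a strict reversal, which is precisely why a chain exhibiting an incomparable pair $d_1,d_2$ inside $I(A)$ is needed even though $A$ itself is totally ordered.
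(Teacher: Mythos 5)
Your argument is correct, and it is worth noting up front that the paper offers no proof of this remark at all---it is stated as a bare observation immediately after Lemma \ref{i2}---so there is nothing to compare your proof against; what you have produced is a genuine verification that the paper omits. Both halves check out. For $t$, the strict reversal $d_{\underline{0}}<\bar{d}$ with $t(d_{\underline{0}})=\bar{d}\not\leq d_{\underline{0}}=t(\bar{d})$ (Lemma \ref{i2}, items 1 and 2) is the shortest possible witness; equivalently one could invoke Theorem \ref{i3}, since $t(d)=O_{d(\underline{0})}$ visibly reverses order as $d(\underline{0})$ grows. For $e$, your bookkeeping on the three-element chain $\underline{0}<m<\bar{1}$ is accurate: the five maps you list do exhaust $I(A)$ (the value at $\bar{1}$ is forced, and monotonicity rules out the pair $(\bar{1},m)$); the condition $zd=d$ is indeed equivalent to $z$ fixing every element of the image of $d$, which gives $\mathcal{I}_{e}(d_{1})=\{d_{\underline{0}},d_{1}\}$ and $\mathcal{I}_{e}(d_{3})=\{d_{\underline{0}},d_{2}\}$; and the pointwise suprema are $e(d_{1})=d_{1}$ and $e(d_{3})=d_{2}$, which are incomparable although $d_{1}\leq d_{3}$. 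Your computed values are also consistent with Lemma \ref{i2}, item 7: $e(d_{1})=d_{1}$ because $d_{1}$ is idempotent, while $e(d_{3})=d_{2}<d_{3}$ because $d_{3}$ is not, and since both values are idempotent the failure of monotonicity is witnessed inside the stated codomain $C(A)$. One small bonus you could state explicitly: because $e(d_{1})$ and $e(d_{3})$ are incomparable in both directions, your single example shows that $e$ is neither monotone nor antitone, which is slightly stronger than what the remark asserts (and contrasts with $t$, which by Lemma \ref{i2} is genuinely order-reversing).
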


Now, fix $b\in A$ and let $a\in A$. We define:
\begin{displaymath}
O_b(a)=\begin{cases}
  \bar{1} &  \text{if $a\geq b$} \\
a & \text{otherwise}.
\end{cases}
\end{displaymath}
Clearly $O_{b}$ is an inflator.

\begin{thm}\label{i3}
Let $A$ be an idiom. Then, 
\[O_{d(\underline{0})}=t(d)\] for each $d\in I(A)$.
\end{thm}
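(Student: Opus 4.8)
The plan is to show that $O_{d(\underline{0})}$ is in fact the least element of the set $\mathcal{I}_{t}(d)$; since $t(d)$ is by definition $\bigwedge\mathcal{I}_{t}(d)$, the presence of a minimum immediately forces $t(d)$ to equal it. Writing $b:=d(\underline{0})$ throughout, I would split the argument into two verifications: first that $O_{b}$ itself belongs to $\mathcal{I}_{t}(d)$, and second that $O_{b}$ lies below every member of $\mathcal{I}_{t}(d)$.

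For membership I need $O_{b}d=\bar{d}$, that is, $O_{b}(d(a))=\bar{1}$ for all $a\in A$. Since $\underline{0}\leq a$ and $d$ is monotone, we have $d(a)\geq d(\underline{0})=b$, so by the very definition of $O_{b}$ the value $O_{b}(d(a))$ is $\bar{1}$. Hence $O_{b}\in\mathcal{I}_{t}(d)$, and in particular $t(d)=\bigwedge\mathcal{I}_{t}(d)\leq O_{b}$.

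For the lower-bound property I would fix any $z\in\mathcal{I}_{t}(d)$, so that $z(d(c))=\bar{1}$ for every $c\in A$; taking $c=\underline{0}$ gives $z(b)=\bar{1}$. I then compare $O_{b}(a)$ with $z(a)$ pointwise. If $a\geq b$, monotonicity of $z$ forces $z(a)\geq z(b)=\bar{1}$, so $z(a)=\bar{1}=O_{b}(a)$. If $a\not\geq b$, then $O_{b}(a)=a\leq z(a)$ because $z$ is an inflator. Either way $O_{b}(a)\leq z(a)$, so $O_{b}\leq z$. Taking the infimum over all $z\in\mathcal{I}_{t}(d)$ yields $O_{b}\leq t(d)$, and together with the reverse inequality from the previous step we conclude $t(d)=O_{b}=O_{d(\underline{0})}$.

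I do not anticipate a genuine obstacle here: the mathematical content of the statement is precisely the observation that the infimum defining $t(d)$ is attained, and that the attaining inflator is exactly $O_{d(\underline{0})}$. The only routine points that still need recording are that $O_{b}$ is an inflator (already noted in the text) and the two monotonicity/inflatory checks above. The guiding idea is simply to exploit $d(a)\geq d(\underline{0})$ in the one direction and $z(d(\underline{0}))=\bar{1}$ together with the inflatory inequality $a\leq z(a)$ in the other.
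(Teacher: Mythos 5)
Your proof is correct and follows essentially the same route as the paper's: you show $O_{d(\underline{0})}$ totalizes $d$ via $d(a)\geq d(\underline{0})$, and that it lies below any totalizer $z$ by the pointwise case analysis using $z(d(\underline{0}))=\bar{1}$ and $a\leq z(a)$. The only difference is cosmetic: the paper routes the second inequality through an auxiliary inflator $h=\bigwedge\{z\in I(A)\mid z(d(\underline{0}))=\bar{1}\}$ and shows $h=t(d)=O_{d(\underline{0})}$, whereas you establish directly that $O_{d(\underline{0})}$ is the minimum of $\mathcal{I}_{t}(d)$, which is a clean streamlining of the same argument.
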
 

\begin{proof}
Take any inflator $d$ on the idiom $A$ and consider its value $d(\underline{0})$,  and then take the inflator $O_{d(\underline{0})}$. Then, $O_{d(\underline{0})}d=\bar{d}$, because $d(\underline{0})\leq d(a)$ for any $a\in A$ and hence $t(d)\leq O_{d(\underline{0})}$. 

On the other hand, consider 
\[h=\bigwedge\{z\in I(A)\mid z(d(\underline{0}))=\bar{1}\}\] 
and note that this $h$ totalizes $d$. Therefore, $t(d)\leq h$, but clearly $h\leq t(d)$ so $h=t(d)$.  Now observe that for any $a\in A$, by the definition of $O_{d(\underline{0})}$ we have that for an inflator $z$ such that $zd(\underline{0})=\bar{d}$ then $O_{d(\underline{0})}(a)\leq z(a)$ , thus $O_{d(\underline{0})}\leq h$, that is, $h=O_{d(\underline{0})}$.

\end{proof}

Denote by $\text{\rm Tot}(I(A))$ the set of all the inflators of the form $O_{d(\underline{0})}$. Observe that this set is a poset. 

\begin{prop}\label{i4}
Let $A$ be an idiom, and $\text{\rm Tot}(I(A))$ the set of all totalizers, let $\mathcal{G}$ be a non-empty family of totalizers with $\mathcal{D}$ its set of associated inflators then:

\begin{enumerate}
\item $O_{\bigwedge\mathcal{D}(\underline{0})}$ is the supremum of the family $\mathcal{G}$.
\item $O_{\bigvee\mathcal{D}(\underline{0})}$ is the infimum fo the family $\mathcal{G}$.
\end{enumerate}
\end{prop}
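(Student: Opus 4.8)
The mechanism behind both statements is that the assignment $b\mapsto O_{b}$ reverses order, and my first step is to prove the equivalence
\[O_{b}\leq O_{b'}\iff b'\leq b\]
for all $b,b'\in A$. For the implication from right to left, if $b'\leq b$ and $a\geq b$ then also $a\geq b'$, so both values equal $\bar{1}$, while if $a\not\geq b$ then $O_{b}(a)=a\leq O_{b'}(a)$ because $O_{b'}$ is an inflator; hence $O_{b}\leq O_{b'}$. For the converse, evaluating at $a=b$ gives $\bar{1}=O_{b}(b)\leq O_{b'}(b)$, which forces $O_{b'}(b)=\bar{1}$, i.e. $b\geq b'$. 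This base equivalence is the only computational point, and it is routine once one tests the definition of $O_{(-)}$ at the element $a=b$.

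Granting this, $b\mapsto O_{b}$ is an order-reversing bijection from $A$ onto $\text{\rm Tot}(I(A))$, so that poset is anti-isomorphic to the complete lattice $A$; in particular suprema and infima in $\text{\rm Tot}(I(A))$ are read off from infima and suprema in $A$, respectively. I would then translate the two claims directly. Writing $\mathcal{G}=\{O_{d(\underline{0})}\mid d\in\mathcal{D}\}$, an element $O_{c}$ is an upper bound of $\mathcal{G}$ exactly when $O_{d(\underline{0})}\leq O_{c}$ for every $d\in\mathcal{D}$, i.e. when $c\leq d(\underline{0})$ for all $d$, i.e. when $c\leq\bigwedge\{d(\underline{0})\mid d\in\mathcal{D}\}=(\bigwedge\mathcal{D})(\underline{0})$. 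Since $b\mapsto O_{b}$ reverses order, the least such upper bound is attained at the largest admissible $c$, namely $c=(\bigwedge\mathcal{D})(\underline{0})$, giving $\bigvee\mathcal{G}=O_{(\bigwedge\mathcal{D})(\underline{0})}$; this is a totalizer by Theorem \ref{i3} because $\bigwedge\mathcal{D}$ is again an inflator. The dual computation, with lower bounds of $\mathcal{G}$ corresponding to upper bounds of $\{d(\underline{0})\mid d\in\mathcal{D}\}$ and least such upper bound $(\bigvee\mathcal{D})(\underline{0})$, yields $\bigwedge\mathcal{G}=O_{(\bigvee\mathcal{D})(\underline{0})}$.

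The point I would emphasise, and the only place where care is genuinely needed, is that these suprema and infima are taken inside the poset $\text{\rm Tot}(I(A))$. This matters for the supremum: the join of $\mathcal{G}$ computed in the ambient lattice $I(A)$ is the pointwise join, which equals $\bar{1}$ at $a$ precisely when $a\geq d(\underline{0})$ for some $d\in\mathcal{D}$, whereas $O_{(\bigwedge\mathcal{D})(\underline{0})}$ equals $\bar{1}$ on the generally larger set of $a$ with $a\geq(\bigwedge\mathcal{D})(\underline{0})$; thus the pointwise join need not be a totalizer nor agree with $\bigvee\mathcal{G}$ as computed in $\text{\rm Tot}(I(A))$. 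The antitone correspondence is exactly what lets us avoid computing joins in $I(A)$ and instead obtain them as meets in $A$, where completeness is immediate. For the infimum there is no such discrepancy: the pointwise meet of $\mathcal{G}$ in $I(A)$ already equals $O_{(\bigvee\mathcal{D})(\underline{0})}$, so $\bigwedge\mathcal{G}$ coincides whether computed in $\text{\rm Tot}(I(A))$ or in $I(A)$, and this can be checked directly from the definition of $O_{(-)}$ for a reader who prefers an argument not routed through the anti-isomorphism. Hence the only real obstacle is bookkeeping the direction of the order reversal; no property of idioms beyond completeness of $A$ is required.
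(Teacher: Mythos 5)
Your proof is correct and follows essentially the same route as the paper's: both arguments rest on the order-reversal $O_{b}\leq O_{b'}\iff b'\leq b$ (which the paper uses implicitly in both directions and you prove explicitly) and both identify the candidate bounds as the totalizers of $\bigwedge\mathcal{D}$ and $\bigvee\mathcal{D}$ via Theorem \ref{i3}. Your closing observation that the supremum must be computed inside $\text{\rm Tot}(I(A))$ --- since the pointwise join in $I(A)$ need not be a totalizer, while the pointwise meet already is one --- is a worthwhile clarification that the paper leaves tacit.
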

\begin{proof}
Take a non-empty family of these inflators $\mathcal{G}$ and let $\mathcal{D}$ be the set of inflators given by $d\in\mathcal{D}\Leftrightarrow O_{d(\underline{0})}\in\mathcal{G}$. Then, consider the supremum , $\bigvee\mathcal{D}$ of $\mathcal{D}$. This inflator has  totalizer $O_{\bigvee\mathcal{D}(\underline{0})}$, and by  construction we have that $O_{\bigvee\mathcal{D}(\underline{0})}\leq O_{d(\underline{0})}$ for every $d\in\mathcal{D}$. Note that if there is a totalizer $O_{z(\underline{0})}$ such that $O_{z(\underline{0})}\leq O_{d(\underline{0})}$, for all $d\in\mathcal{D}$, then $d(\underline{0})\leq z(\underline{0})$ for all $d\in\mathcal{D}$. Thus, $\bigvee\mathcal{D}(\underline{0})\leq z(\underline{0})$ and from this we conclude that $O_{\bigvee\mathcal{D}(\underline{0})}\leq O_{z(\underline{0})}$, that is, $O_{\bigvee\mathcal{D}(\underline{0})}$ is the infimum of the family $\mathcal{G}$ in $\text{\rm Tot}(I(A))$. For the description of the supremum of an arbitrary family of totalizers we proceed symmetrically.
\end{proof}

\begin{obs}\label{remark2.6}
From the description of  totalizer, $t(t(d))=\bar{1}$. We also note that $(z\vee O_{a})(b)=z(b)\vee O_{a}(b)$, from where it follows that the supremum $(z\vee O_{a})(b)$ is $\bar{1}$ or $z(b)$.  Therefore, this supremum is actually $zO_{a}$ for any inflator $z$. These observations tell us that the product of two totalizers is commutative, as inflators. Another important observation is that $\text{\rm Tot}(I(A))$ satisfies a reversed idiom distributivity law of $A$, and if $A$ is a frame then $\text{\rm Tot}(I(A))$ also satisfies a reversed frame distributivity law. 
\end{obs}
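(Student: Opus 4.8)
The plan is to derive every assertion in the remark from Theorem~\ref{i3}, which identifies $t(d)$ with $O_{d(\underline{0})}$, together with the pointwise formulas for suprema and products of inflators recorded in the Preliminaries.

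First I would dispatch the purely computational claims. For $t(t(d))$, I would evaluate $t(d)=O_{d(\underline{0})}$ at $\underline{0}$: since $O_{d(\underline{0})}(\underline{0})=\bar{1}$ holds exactly when $\underline{0}\geq d(\underline{0})$, one gets $t(d)(\underline{0})=\underline{0}$ whenever $d(\underline{0})>\underline{0}$, and then Theorem~\ref{i3} yields $t(t(d))=O_{\underline{0}}=\bar{d}$ (the top inflator, written $\bar{1}$ in the remark); I would flag the degenerate case $d(\underline{0})=\underline{0}$, where $t(d)=\bar{d}$ and $t(t(d))=d_{\underline{0}}$ by Lemma~\ref{i2}(2). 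The identity $(z\vee O_{a})(b)=z(b)\vee O_{a}(b)$ is just the pointwise description of the supremum of inflators; splitting on whether $b\geq a$, the right-hand side is $z(b)\vee\bar{1}=\bar{1}$ when $b\geq a$ and $z(b)\vee b=z(b)$ (using $b\leq z(b)$) otherwise. To identify this supremum with the product $zO_{a}$, I would compute $(zO_{a})(b)=z(O_{a}(b))$ by the same case split: when $b\geq a$ it is $z(\bar{1})=\bar{1}$ (as $z$ is an inflator), and when $b\not\geq a$ it is $z(b)$; comparing term by term gives $z\vee O_{a}=zO_{a}$. Commutativity of two totalizers is then immediate: taking $z=O_{c}$ gives $O_{c}O_{a}=O_{c}\vee O_{a}$ and taking $z=O_{a}$ gives $O_{a}O_{c}=O_{a}\vee O_{c}$, and these agree because join is commutative.

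For the two distributivity statements I would first make explicit the order-theoretic content of Proposition~\ref{i4}. The assignment $a\mapsto O_{a}$ is injective (if $O_{a}=O_{c}$ then $\{b\mid b\geq a\}=\{b\mid b\geq c\}$, forcing $a=c$), order-reversing ($O_{a}\leq O_{c}\Leftrightarrow a\geq c$), and surjective onto the totalizers since every $a\in A$ equals $d(\underline{0})$ for the inflator $x\mapsto x\vee a$. Proposition~\ref{i4} then says precisely that $\bigvee$ in $\text{\rm Tot}(I(A))$ corresponds to $\bigwedge$ in $A$ and $\bigwedge$ in $\text{\rm Tot}(I(A))$ to $\bigvee$ in $A$; in other words $a\mapsto O_{a}$ is an isomorphism of complete lattices $A^{\op}\cong\text{\rm Tot}(I(A))$. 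I would then transport the law (IDL) across this anti-isomorphism: a directed $X\subseteq A$ becomes a codirected family $\{O_{x}\}$ in $\text{\rm Tot}(I(A))$, meets turn into joins and joins into meets, so $c\wedge\bigvee X=\bigvee_{x}(c\wedge x)$ becomes the reversed law
\[O_{c}\vee\Bigl(\bigwedge_{x\in X}O_{x}\Bigr)=\bigwedge_{x\in X}\bigl(O_{c}\vee O_{x}\bigr)\]
valid for codirected families. When $A$ is moreover a frame, (FDL) holds for arbitrary $X$, and the identical transport yields the reversed frame law for arbitrary families.

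The block of computations is genuinely routine; the one point needing care is the direction-reversal in the last paragraph. The hard part will be getting straight that directedness in $A$ corresponds to \emph{co}directedness in $\text{\rm Tot}(I(A))$, so that the dual of meet-continuity reads ``join distributes over codirected meets'' rather than over arbitrary ones—which is exactly why the frame hypothesis is what removes the codirectedness restriction. I would also verify that the formulas of Proposition~\ref{i4} really do furnish \emph{all} suprema and infima, so that $\text{\rm Tot}(I(A))$ is a complete lattice and the transported laws are meaningful; this holds because every family of totalizers is indexed by a family of elements $d(\underline{0})$ of $A$, whose meet and join exist in the complete lattice $A$.
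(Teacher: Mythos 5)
Your proposal is correct and follows essentially the route the remark itself sketches: every claim is read off from Theorem~\ref{i3} (the identification $t(d)=O_{d(\underline{0})}$), the pointwise formulas for suprema and products of inflators, and the order-reversing bijection $a\mapsto O_{a}$ whose interaction with meets and joins is exactly Proposition~\ref{i4}, so that (IDL)/(FDL) transport to the reversed laws on $\text{\rm Tot}(I(A))$. Your flagging of the degenerate case $d(\underline{0})=\underline{0}$ is a genuine (and correct) refinement: there $t(d)=\bar{d}$ and $t(t(d))=d_{\underline{0}}$ by Lemma~\ref{i2}, so the paper's unconditional claim $t(t(d))=\bar{d}$ implicitly assumes $d(\underline{0})\neq\underline{0}$.
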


Given any $a\in A$ define
\begin{displaymath}
\iota_{a}(b)=
\begin{cases}
a & \text{if }b=\underline{0} \\
\bar{1} & \text{if }b \neq \underline{0}.
\end{cases}
\end{displaymath}
This is clearly an inflator. Moreover the assignment $a\mapsto \iota(a)$ is an embedding from $A$ to $I(A)$. Now, note that $\iota_{a}\iota_{b}=\bar{d}$, and for any inflator $d$, $d\iota_{a}=\iota_{d(a)}$, and then $d\iota_{\underline{0}}=\iota_{d(\underline{0})}$. By definition of these inflators we have that $d\leq \iota_{d(\underline{0})}$, in particular $\iota_{d(\underline{0})}d=\bar{d}$.  Then, by construction of the totalizer $t(d)\leq \iota_{d(\underline{0})}$.

To proceed with the study of totalizers as in \cite{17}, we following relation in $I(A)$:

Given two inflators $d$ and $d'$ we say that $d\sim_{t} d'\Leftrightarrow t(d)=t(d')$. This is clearly an equivalence relation.  Denote by $[d]_{t}$ an equivalence class of this relation. 

Observe that $d\in[d_{\underline{0}}]_{t}\Leftrightarrow t(d)=t(d_{\underline{0}})=\bar{d}$, and $d\in[\bar{d}]_{t}\Leftrightarrow t(d)=d_{\underline{0}}$, that is, $d=d_{\underline{0}}d=\bar{d}$. Hence, $[\bar{d}]_{t}=\{\bar{d}\}$.

\begin{prop}\label{i5}
If $A$ is any idiom and $d$ an inflator on $A$, then $[d]_{t}$ is an interval of the form \[[\bigwedge\mathcal{T}_{d},\bigvee\mathcal{T}_{d}]\] 
Moreover, there is a bijective correspondence between $\text{\rm Tot}(I(A))$ and the set of these intervals.
\end{prop}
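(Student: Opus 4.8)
The plan is to make the relation $\sim_t$ completely explicit via Theorem \ref{i3} and then read off both assertions. Since $t(d)=O_{d(\underline{0})}$, I would first note that $O_b=O_{b'}$ forces $b=b'$: the locus where $O_b$ takes the value $\bar{1}$ is exactly the up-set $\{a\mid a\geq b\}$, whose least element is $b$, so equality of the two inflators recovers equality of the two elements. Consequently $d\sim_t d'$ if and only if $d(\underline{0})=d'(\underline{0})$, and the class is $[d]_t=\{d'\in I(A)\mid d'(\underline{0})=d(\underline{0})\}$. I write $b:=d(\underline{0})$ and take $\mathcal{T}_d:=[d]_t$.

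Next I would show $\mathcal{T}_d$ is the interval $[\bigwedge\mathcal{T}_d,\bigvee\mathcal{T}_d]$. The key tool is that infima and suprema of non-empty families of inflators are computed pointwise, as recorded in the preliminaries. Setting $\ell=\bigwedge\mathcal{T}_d$ and $u=\bigvee\mathcal{T}_d$ and evaluating at $\underline{0}$ gives $\ell(\underline{0})=\bigwedge\{d'(\underline{0})\mid d'\in\mathcal{T}_d\}=b$ and likewise $u(\underline{0})=b$, so both endpoints again lie in $\mathcal{T}_d$. For convexity, any inflator $d''$ with $\ell\leq d''\leq u$ satisfies $b=\ell(\underline{0})\leq d''(\underline{0})\leq u(\underline{0})=b$, whence $d''(\underline{0})=b$ and $d''\in\mathcal{T}_d$; the reverse inclusion $\mathcal{T}_d\subseteq[\ell,u]$ is immediate from the definitions of infimum and supremum. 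This yields $[d]_t=[\ell,u]$.

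Finally, for the bijection I would send each class $[d]_t$ to the totalizer $t(d)=O_{d(\underline{0})}\in\text{Tot}(I(A))$. This map is well defined and injective precisely because $\sim_t$ is the relation $t(d)=t(d')$, and it is surjective since every element of $\text{Tot}(I(A))$ has the form $O_{d(\underline{0})}=t(d)$ for some $d$. Equivalently, both sides are naturally indexed by the elements $b\in A$: every such $b$ occurs as $d(\underline{0})$, for instance for the inflator $a\mapsto a\vee b$, so $b\mapsto O_b$ is a bijection from $A$ onto $\text{Tot}(I(A))$ matching the indexing $b\mapsto[d]_t$ of the classes.

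I do not expect a serious obstacle here; the content is essentially bookkeeping once Theorem \ref{i3} is invoked. The one point requiring care is the injectivity of $b\mapsto O_b$, so that distinct values at $\underline{0}$ produce distinct totalizers and distinct classes, together with the legitimacy of the pointwise formulas for $\bigwedge$ and $\bigvee$ of inflators; both are already available in the preliminaries.
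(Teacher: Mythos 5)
Your proof is correct, and it rests on the same pivot as the paper's: Theorem \ref{i3} reduces everything to the value $d(\underline{0})$. The difference is in the routing and in completeness. The paper computes the totalizers of $\bigvee\mathcal{T}_{d}$ and $\bigwedge\mathcal{T}_{d}$ by invoking Proposition \ref{i4} (the description of infima and suprema in $\text{\rm Tot}(I(A))$) and concludes only that these two endpoints lie in $[d]_{t}$; you instead evaluate the pointwise formulas for $\bigwedge$ and $\bigvee$ directly at $\underline{0}$, which is more elementary and bypasses Proposition \ref{i4} entirely. More importantly, your write-up supplies two steps the paper leaves implicit: (i) the injectivity of $b\mapsto O_{b}$ (via the observation that the set where $O_{b}$ equals $\bar{1}$ is exactly the up-set of $b$, whose least element is $b$), which is what actually justifies the paper's chain ``$[d]_{t}=[d']_{t}$ iff $d(\underline{0})=d'(\underline{0})$ iff $O_{d(\underline{0})}=O_{d'(\underline{0})}$''; and (ii) the convexity of the class, i.e.\ that any $d''$ with $\bigwedge\mathcal{T}_{d}\leq d''\leq\bigvee\mathcal{T}_{d}$ satisfies $d''(\underline{0})=d(\underline{0})$ and hence lies in $[d]_{t}$ --- without this the statement that $[d]_{t}$ \emph{is} an interval is not fully proved, and the paper's proof omits it. Your surjectivity remark (every $b\in A$ arises as $d(\underline{0})$, e.g.\ for $a\mapsto a\vee b$) also makes the bijection with $A$ itself explicit, which the paper only gestures at. So: same underlying idea, but your version is the more self-contained and rigorous of the two.
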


\begin{proof}
Now, let $\mathcal{T}_{d}=[d]_{t}$ and consider the supremum $\bigvee\mathcal{T}_{d}$ and  infimum $\bigwedge\mathcal{T}_{d}$ of this family.  By the above description of the totalizers, we know that the totalizers of these two inflators are $O_{(\bigvee\mathcal{T}_{d})(\underline{0})}$ and $O_{(\bigwedge\mathcal{T}_{d})(\underline{0})}$. But we also know that these two inflators are the infimum and the supremum of the family of totalizers $O_{d'(\underline{0})}$ with $d'\in\mathcal{T}_{d}$. This means that $O_{(\bigvee\mathcal{T}_{d})(\underline{0})}=\bigwedge\{O_{d'(\underline{0})}\mid d'\in\mathcal{T}_{d}\}$ and $O_{(\bigwedge\mathcal{T}_{d})(\underline{0})}=\bigvee\{O_{d'(\underline{0})}\mid d'\in\mathcal{T}_{d}\}$.  But $\bigvee\{O_{d'(\underline{0})}\mid d'\in\mathcal{T}_{d}\}=O_{d(\underline{0})}$ and $\bigwedge\{O_{d'(\underline{0})}\mid d'\in\mathcal{T}_{d}\}=O_{d(\underline{0})}$. This shows that $\bigvee\mathcal{T}_{d}$ and $\bigwedge\mathcal{T}_{d}$ are in $[d]_{t}$.

For the bijection, observe that $[d]_{t}=[d']_{t}$ if and only if $d(\underline{0})=d'(\underline{0})$ if and only if $O_{d(\underline{0})}=O_{d'(\underline{0})}$.
\end{proof}

\begin{thm}\label{i6}
Let $A$ be an idiom and $d$ any inflator on $A$. Then the intervals of the form $[u_{d(\underline{0})},\iota_{d(\underline{0})}]$ are collapsed by taking totalizers. Moreover, these intervals are just the blocks of the partition $\sim_{t}$, and there is a bijection between $\text{\rm Tot}(I(A))$ and the set of intervals of the form $[u_{d(\underline{0})},\iota_{d(\underline{0})}]$.
\end{thm}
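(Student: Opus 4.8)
The plan is to make the two endpoints of each block $[d]_{t}$ explicit and then deduce all three assertions from Proposition~\ref{i5} and Theorem~\ref{i3}; throughout, $u_{c}$ denotes the inflator $b\mapsto b\vee c$. The key preliminary observation, recorded at the end of the proof of Proposition~\ref{i5}, is that two inflators lie in the same block exactly when they agree at the bottom, so that
\[[d]_{t}=\{d'\in I(A)\mid d'(\underline{0})=d(\underline{0})\}.\]
Thus the whole theorem reduces to identifying the least and the greatest inflator having a prescribed value $c:=d(\underline{0})$ at $\underline{0}$.

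First I would show that $\iota_{d(\underline{0})}$ is the top of the block and $u_{d(\underline{0})}$ its bottom. For the top, if $d'(\underline{0})=c$ then $d'(\underline{0})=c=\iota_{c}(\underline{0})$ while $d'(a)\leq\bar{1}=\iota_{c}(a)$ for every $a\neq\underline{0}$, so $d'\leq\iota_{c}$; since $\iota_{c}(\underline{0})=c$ the inflator $\iota_{c}$ itself lies in the block, whence $\iota_{d(\underline{0})}=\bigvee\mathcal{T}_{d}$. For the bottom, any $d'$ with $d'(\underline{0})=c$ satisfies both $d'(a)\geq a$ and, by monotonicity, $d'(a)\geq d'(\underline{0})=c$, hence $d'(a)\geq a\vee c=u_{c}(a)$; as $u_{c}$ is itself an inflator in the block, it is its least element and $u_{d(\underline{0})}=\bigwedge\mathcal{T}_{d}$. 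Together with the displayed description this gives $[d]_{t}=[u_{d(\underline{0})},\iota_{d(\underline{0})}]$, which is exactly the statement that these intervals are the blocks of $\sim_{t}$.

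For the collapse claim, I would note that any $d'\in[u_{d(\underline{0})},\iota_{d(\underline{0})}]$ is squeezed at the bottom, $c=u_{c}(\underline{0})\leq d'(\underline{0})\leq\iota_{c}(\underline{0})=c$, so $d'(\underline{0})=d(\underline{0})$; Theorem~\ref{i3} then yields $t(d')=O_{d'(\underline{0})}=O_{d(\underline{0})}$, so the totalizer is constant on the interval with common value $O_{d(\underline{0})}$. Finally, the bijection of Proposition~\ref{i5} between $\text{\rm Tot}(I(A))$ and the set of blocks transports, through the identification $[d]_{t}=[u_{d(\underline{0})},\iota_{d(\underline{0})}]$, to the asserted bijection $O_{d(\underline{0})}\leftrightarrow[u_{d(\underline{0})},\iota_{d(\underline{0})}]$; well-definedness and injectivity in both directions come down to the chain $O_{d(\underline{0})}=O_{d'(\underline{0})}\Leftrightarrow d(\underline{0})=d'(\underline{0})\Leftrightarrow[u_{d(\underline{0})},\iota_{d(\underline{0})}]=[u_{d'(\underline{0})},\iota_{d'(\underline{0})}]$ already available from Proposition~\ref{i5}.

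The only genuinely delicate point is the identification of the lower endpoint: one must use both defining properties of an inflator at once—the inflationary inequality $d'(a)\geq a$ and monotonicity $d'(a)\geq d'(\underline{0})$—to force $d'\geq u_{c}$, and check that $u_{c}(b)=b\vee c$ is indeed an inflator (monotonicity and $b\leq b\vee c$ are immediate). Everything else is bookkeeping layered on top of Proposition~\ref{i5} and Theorem~\ref{i3}, so I expect no further obstacle.
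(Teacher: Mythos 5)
Your proof is correct and follows essentially the same route as the paper: both rest on Theorem~\ref{i3} (the totalizer depends only on the value at $\underline{0}$), the explicit inflators $u_{c}$ and $\iota_{c}$, and the bijection of Proposition~\ref{i5}. Your write-up is in fact more complete than the paper's own, which only checks that $u_{d(\underline{0})}\leq d\leq \iota_{d(\underline{0})}$ and that the two endpoints share the totalizer of $d$, leaving implicit the identification of each block of $\sim_{t}$ with the full order-interval $[u_{d(\underline{0})},\iota_{d(\underline{0})}]$ --- the two inclusions you verify explicitly.
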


\begin{proof}
Take any element $a\in A$ with $A$ an idiom, and set $u_{a}(b)=a\vee b$. It is immediate that $u_{a}$ is an idempotent inflator on $A$ (if $A$ is a frame this inflator is a nucleus). Now suppose we have an inflator $d$ on $A$ and  consider $d(\underline{0})$ and  $u_{d(\underline{0})}$.  By the construction of the totalizer we know that $t(u_{d(\underline{0})})=O_{u_{d(\underline{0})}(\underline{0})}$. Now, since $u_{d(\underline{0})}(\underline{0})=d(\underline{0})$, then $t(u_{d(\underline{0})})=t(d)$.  For the inflator $\iota_{d(\underline{0})}$, its  totalizer is just $O_{d(\underline{0})}$. Hence, from the definitions we have $d\leq \iota_{d(\underline{0})}$ and $u_{d(\underline{0})}\leq d$.

By last Proposition we have the bijection.
\end{proof}

\begin{ej}\label{ex1}
For any associative ring with unit $R$ consider the category $R$-$\Mod$ of left $R$-modules. Let $R$-$\tors$ be the frame of all hereditary torsion theories. Most of the interplay between the structure of the ring $R$ and the category $R$-$\Mod$ is related to this frame. See \cite{8} and \cite{30}. We are interested on the various notions of dimension associated to the category $R$-$\Mod$ as in \cite{1}, \cite{2} and \cite{27}. Now recall that for $\tau\in R-\tors$ a module $M$ is $\tau$\emph{-cocritical} if $M$ is $\tau$-torsion free and $M/N$ is  $\tau$-torsion for every $0\neq N\subseteq M$. $M$ is said \emph{cocritical} if it is $\tau$-cocritical for some $\tau\in R$-$\tors$. Put $R$-sp=$\left\{\chi(M)\mid M \text{ is cocritical} \right\}$.

Let $\mathfrak{g}:R$-$\tors\to R$-$\tors$ given by
\[\mathfrak{g}(\tau)=\tau\vee(\bigvee\{\xi(N)\mid N \;\text{is $\tau$-cocritical}\}).\]
This is an inflator in $R$-$\tors$. Moreover, it is not hard to see that $\mathfrak{g}$ is a pre-nucleus, in fact if $\mathfrak{g}^{\infty}=\bar{d}$ then $R$ has Gabriel dimension. The next two propositions characterize $\mathfrak{g}(\tau)$:

\begin{prop}\label{i7}
Let $R$ be an associative ring with unit. The next statements are equivalent:
\begin{enumerate}
\item  $u_{\mathfrak{g}(\xi)}=\mathfrak{g}$.
\item $R\text{\rm -sp}=\{\chi(S)\mid S\in R\text{\rm -simp}\}$, where $R\text{\rm-simp}$ is a complete set of representatives of isomorphism classes of simple $R$-modules.
\end{enumerate}
\end{prop}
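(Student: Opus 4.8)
The plan is to read condition (1) as an equality of inflators on $R$-$\tors$ and to peel off the half that is automatic. Writing $\xi$ for the least torsion theory $\underline{0}$, I would first note that a nonzero module is $\xi$-cocritical exactly when it is simple (because $M/N$ being $\xi$-torsion forces $M/N=0$), so that $\mathfrak{g}(\xi)=\bigvee\{\xi(S)\mid S\in R\text{-simp}\}$. Theorem \ref{i6} supplies $u_{\mathfrak{g}(\xi)}\leq\mathfrak{g}$ for free; alternatively one checks it directly, since for any $\tau$ and any simple $S$ either $S$ is $\tau$-torsion (so $\xi(S)\leq\tau$) or $S$ is $\tau$-torsion-free and hence $\tau$-cocritical (so $\xi(S)\leq\mathfrak{g}(\tau)$). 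Consequently (1) is equivalent to the reverse inequality $\mathfrak{g}\leq u_{\mathfrak{g}(\xi)}$, i.e. to the single assertion
\[\xi(N)\leq \tau\vee\mathfrak{g}(\xi)\qquad\text{for every }\tau\text{ and every }\tau\text{-cocritical }N.\]

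My second step would be to introduce the intermediate condition $(\ast)$: \emph{every cocritical module has nonzero socle}, and to prove (1)$\Leftrightarrow(\ast)\Leftrightarrow$(2). The torsion-theoretic ingredients I would import (Golan; cf. \cite{8},\cite{30}) are standard: a $\tau$-cocritical module is uniform, since two nonzero submodules meeting in $0$ would be simultaneously $\tau$-torsion and $\tau$-torsion-free, so every nonzero submodule is essential; the torsion class of $\chi(M)$ is $\{X\mid \mathrm{Hom}(X,E(M))=0\}$, depending only on $E(M)$; and $M$ is $\chi(M)$-cocritical. I would also use closure of torsion classes under extensions and that the torsion class of a join is generated by the union of the two torsion classes. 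For $(\ast)\Leftrightarrow$(2): the inclusion $\{\chi(S)\mid S\in R\text{-simp}\}\subseteq R\text{-sp}$ is automatic; if $(\ast)$ holds and $N$ is cocritical, a simple $S\subseteq\mathrm{soc}(N)$ is essential, so $E(N)=E(S)$ and hence $\chi(N)=\chi(S)$, giving the reverse inclusion and (2). Conversely a zero-socle cocritical $N$ admits no simple submodule in $E(N)$, so every simple is $\chi(N)$-torsion and $\chi(N)\neq\chi(S)$ for all $S$, so $\chi(N)\in R\text{-sp}$ witnesses the failure of (2).

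For (1)$\Leftrightarrow(\ast)$ I would route everything through the reduced inequality above. Assuming $(\ast)$, given a $\tau$-cocritical $N$ I pick an essential simple $S\subseteq N$; from $0\to S\to N\to N/S\to 0$, with $N/S$ being $\tau$-torsion and $S$ being $\xi(S)$-torsion, closure under extensions gives $\xi(N)\leq\tau\vee\xi(S)\leq\tau\vee\mathfrak{g}(\xi)$, which is precisely the reduced inequality and hence (1). For the contrapositive, a zero-socle cocritical $N$ is $\chi(N)$-cocritical, so $\xi(N)\leq\mathfrak{g}(\chi(N))$; but every simple is $\chi(N)$-torsion, so $\mathfrak{g}(\xi)\leq\chi(N)$ and thus $u_{\mathfrak{g}(\xi)}(\chi(N))=\chi(N)\vee\mathfrak{g}(\xi)=\chi(N)$, while $N$ being $\chi(N)$-torsion-free and $\xi(N)$-torsion forces $\xi(N)\not\leq\chi(N)$. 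Hence $\mathfrak{g}(\chi(N))\neq u_{\mathfrak{g}(\xi)}(\chi(N))$ and (1) fails.

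The main obstacle I anticipate is not the lattice bookkeeping but correctly marshaling the torsion-theoretic dictionary: pinning down $\mathfrak{g}(\xi)\leq\chi(N)\Leftrightarrow\mathrm{soc}(N)=0$, the identification $\chi(N)=\chi(S)$ through $E(N)=E(S)$, and the fact that $N$ is $\chi(N)$-cocritical, which is what lets a single module $N$ evaluated at $\tau=\chi(N)$ genuinely detect the failure of the global equality (1).
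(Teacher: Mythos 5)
Your proof is correct, and it rests on the same torsion-theoretic dictionary as the paper's (cocritical modules are uniform; $\chi(M)$ depends only on $E(M)$; a cocritical $M$ is $\chi(M)$-cocritical; extension-closure of torsion classes; a simple module is either $\tau$-torsion or $\tau$-cocritical), but your organization is genuinely different. You first discard the automatic half $u_{\mathfrak{g}(\xi)}\leq\mathfrak{g}$ (Theorem \ref{i6}) and route both implications through the intermediate condition $(\ast)$ that every cocritical module has nonzero socle, whereas the paper proves $(1)\Rightarrow(2)$ and $(2)\Rightarrow(1)$ directly, with $(\ast)$ appearing only implicitly as the simple submodule $S\leq M$ produced midway through each implication. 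The substantive difference is in how the failure of $(\ast)$ is detected: the paper's $(1)\Rightarrow(2)$ argues positively, extracting from $\xi(M)\leq\chi(M)\vee\mathfrak{g}(\xi)$ a simple $S$ with $t_{\xi(S)}(M)\neq 0$ and hence $S\leq M$, a step that quietly uses the structure of the torsion class of a join of hereditary torsion theories (a nonzero torsion module that is $\chi(M)$-torsion-free must contain a nonzero $\xi(S)$-torsion submodule) together with the fact that a nonzero $\xi(S)$-torsion module contains a copy of $S$. Your contrapositive replaces that machinery with the cleaner observation that a zero-socle cocritical $N$ has $\mathrm{Hom}(S,E(N))=0$ for every simple $S$, so that $\mathfrak{g}(\xi)\leq\chi(N)$ and $u_{\mathfrak{g}(\xi)}$ fixes $\chi(N)$, while $\xi(N)\not\leq\chi(N)$ forces $\mathfrak{g}(\chi(N))>\chi(N)$. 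So your route trades Golan's join-structure lemma for an explicit auxiliary condition and a one-point witness argument; the paper's route is shorter but imports more of the structure theory from \cite{8}. Both arguments are complete.
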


\begin{proof}
$(1)\Rightarrow(2)$: 
Let $\tau\in R$-sp, then there is an $M$, $\tau$-cocritical such that $\chi(M)=\tau$. By the definition of $\mathfrak{g}$, we have: 
$$\tau\vee(\bigvee\{\xi(N)\mid N \;\text{is $\tau$-cocritical}\})=\mathfrak{g}(\chi(M))=\chi(M)\vee \mathfrak{g}(\xi)=\chi(M)\vee( \vee_{S\in R-simp} \xi(S)).$$ 
Then, there exists $S$ simple such that $t_{\xi(s)}(M)\neq 0$ and so $S\leq M$. Now,  since $S$ is essential in $M$, then $E(S)=E(M)$, that is, $\chi(S)=\chi(M)$.

$(2)\Rightarrow(1)$: 
Again take any proper torsion theory $\tau$, and let $M$ be a $\tau$-cocritical module.  Then $\chi(M)\in R$-sp, and by  $(2)$ there exist a simple $S$ such that $\chi(M)=\chi(S)$. Hence, $S\in \mathcal{F}_{\tau}$ and so $S$ is $\tau$-cocritical.  Therefore, $\tau\vee \xi(M)=\tau\vee\xi(S)$. Notice that a simple $R$-module is $\tau$-torsion or $\tau$-torsionfree, hence $u_{\mathfrak{g}(\xi)}=\mathfrak{g}$.

\end{proof}

Recall that in a left semiartinian ring $R$, every hereditary torsion theory $\xi\neq\tau\in R$-tors is a supremum of atoms.

\begin{cor}\label{i8}
For any ring $R$ the following are equivalent.:
\begin{itemize}
\item[{\rm (1)}]  $R$ is left semiartinian.
\item[{\rm (2)}] $\mathfrak{g}=\iota_{\mathfrak{g}(\xi)}$.
\item[{\rm (3)}] $u_{\mathfrak{g}(\xi)}=\mathfrak{g}=\iota_{\mathfrak{g}(\xi)}$
\end{itemize}

\end{cor}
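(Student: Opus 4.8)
The plan is to prove the cycle $(1)\Rightarrow(3)\Rightarrow(2)\Rightarrow(1)$, after first pinning down the element $\mathfrak{g}(\xi)$, where $\xi$ is the bottom of $R\text{-}\tors$. Since the $\xi$-torsion submodules are all zero, a module is $\xi$-cocritical precisely when it is simple, so the definition collapses to $\mathfrak{g}(\xi)=\bigvee\{\xi(S)\mid S\in R\text{-simp}\}$, the Dickson (Loewy) torsion theory. Consequently $\mathfrak{g}(\xi)=\bar{1}$ if and only if every nonzero module is torsion for this theory, i.e. if and only if $R$ is left semiartinian; this is the bridge tying (1) to the \emph{value} $\mathfrak{g}(\xi)$. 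I would also record, from Theorem \ref{i6}, that $\mathfrak{g}$ sits inside its own block: $u_{\mathfrak{g}(\xi)}\le\mathfrak{g}\le\iota_{\mathfrak{g}(\xi)}$. Thus (2) asserts that $\mathfrak{g}$ is the top of this interval, and (3) that it is simultaneously the top and the bottom.

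For $(1)\Rightarrow(3)$, assuming $R$ left semiartinian gives $\mathfrak{g}(\xi)=\bar{1}$, and since $\mathfrak{g}$ is an inflator, monotonicity forces $\mathfrak{g}(\tau)\ge\mathfrak{g}(\xi)=\bar{1}$ for every $\tau$, so $\mathfrak{g}=\bar{d}$. As $u_{\bar{1}}=\iota_{\bar{1}}=\bar{d}$, all three inflators coincide and (3) holds. The step $(3)\Rightarrow(2)$ is a pure weakening, so nothing is needed there.

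The substance is $(2)\Rightarrow(1)$. First I would prove the lattice fact that, in $R\text{-}\tors$ (which has atoms because $R\ne 0$), one has $u_a=\iota_a$ exactly when $a=\bar{1}$: if $a\ne\bar{1}$, evaluating at $b=a$ (when $a\ne\xi$) or at an atom (when $a=\xi$) separates $u_a$ from $\iota_a$. Hence it suffices to upgrade (2) to $\mathfrak{g}(\xi)=\bar{1}$. To extract this I would use that $\mathfrak{g}$ is a pre-nucleus: if one can find $\sigma,\tau\ne\xi$ with $\sigma\wedge\tau=\xi$, then (2) gives $\mathfrak{g}(\sigma)=\mathfrak{g}(\tau)=\bar{1}$, and $\mathfrak{g}(\xi)=\mathfrak{g}(\sigma\wedge\tau)=\mathfrak{g}(\sigma)\wedge\mathfrak{g}(\tau)=\bar{1}$, so $R$ is semiartinian.

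The main obstacle is precisely producing such a disjoint pair $\sigma\wedge\tau=\xi$ with $\sigma,\tau\ne\xi$: when $R\text{-}\tors$ is chain-like no such pair exists and the meet trick collapses, so this is the genuinely delicate point. Here I would fall back on the two results already established: use Proposition \ref{i7} to convert the extremality of $\mathfrak{g}$ in its block into the identity $R\text{-sp}=\{\chi(S)\mid S\in R\text{-simp}\}$, and then combine it with the recalled fact that over a left semiartinian ring every $\tau\ne\xi$ is a supremum of atoms $\chi(S)$. The heart of the write-up will be the socle/essential-extension bookkeeping — showing, via $E(S)=E(M)$ as in the proof of Proposition \ref{i7}, that the cocritical modules detected by (2) force every nonzero module to have nonzero socle, so that no nonzero $\mathfrak{g}(\xi)$-torsionfree module can survive. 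That reduction, rather than the formal manipulations with $u_{\mathfrak{g}(\xi)}$ and $\iota_{\mathfrak{g}(\xi)}$, is where I expect the real work and the subtleties to concentrate.
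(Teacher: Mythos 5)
Your reduction of (1) to the single equality $\mathfrak{g}(\xi)=\bar{1}$ is correct (the $\xi$-cocritical modules are exactly the simple ones, so $\mathfrak{g}(\xi)$ is the Dickson torsion theory, which is improper precisely when $R$ is left semiartinian), and with it your chain $(1)\Rightarrow(3)\Rightarrow(2)$ works: semiartinian gives $\mathfrak{g}(\xi)=\bar{1}$, and monotonicity of the inflator $\mathfrak{g}$ gives $\mathfrak{g}=\bar{d}=u_{\bar{1}}=\iota_{\bar{1}}$. One small repair: your ``lattice fact'' that $u_a=\iota_a$ forces $a=\bar{1}$ is false when $R$-$\tors=\{\xi,\chi\}$ (e.g.\ $R$ simple artinian), since then the only atom is $\bar{1}$ itself; for $(3)\Rightarrow(1)$ one should instead evaluate both inflators at $b=\mathfrak{g}(\xi)$, which is $\neq\xi$ because $R\neq 0$ has simple modules, so $(3)$ forces $\mathfrak{g}(\xi)=\iota_{\mathfrak{g}(\xi)}(\mathfrak{g}(\xi))=\bar{1}$.

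The genuine gap is $(2)\Rightarrow(1)$, exactly where you located it, and it cannot be closed. Your fallback fails twice: Proposition \ref{i7} has as hypothesis $u_{\mathfrak{g}(\xi)}=\mathfrak{g}$, i.e.\ that $\mathfrak{g}$ is the \emph{bottom} of its block $[u_{\mathfrak{g}(\xi)},\iota_{\mathfrak{g}(\xi)}]$, whereas (2) says it is the \emph{top}, so that proposition is simply not available; and the recalled fact (``over a left semiartinian ring every $\tau\neq\xi$ is a supremum of atoms'') has as hypothesis precisely the conclusion (1) you are trying to prove, so invoking it is circular. Worse, no bookkeeping can rescue the implication, because it is false: take $R=\mathbb{Z}_{(p)}$, or any discrete valuation ring with maximal ideal $m$. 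Then $R$-$\tors$ is the three-element chain $\xi<\tau_{p}<\chi$ (with $\tau_{p}$ the usual torsion theory), $\mathfrak{g}(\xi)=\tau_{p}$ is the Dickson theory, and $\mathfrak{g}(\tau_{p})=\chi$ because $R$ itself is $\tau_{p}$-cocritical (every proper quotient $R/m^{k}$ is torsion) and $\xi(R)=\chi$. Hence $\mathfrak{g}=\iota_{\mathfrak{g}(\xi)}$, so (2) holds, while $R$ is not left semiartinian and (3) fails. This is precisely the ``chain-like'' situation in which your meet-trick has no disjoint pair to exploit; it is not merely a delicate point but a counterexample. (The paper states the corollary without proof; what can actually be established is $(1)\Leftrightarrow(3)$ together with $(1)\Rightarrow(2)$, so any honest write-up has to either drop condition (2) or add to it the hypothesis $u_{\mathfrak{g}(\xi)}=\mathfrak{g}$ of Proposition \ref{i7}.)
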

\end{ej}
Now we give a generalization of the above result into the idiomatic case, for that purpose we need to recall some facts about certain kind of nuclei associated to any interval on an idiom $A$. 
In \cite{23} and \cite{25} the author introduces the following inflator, given an interval $[a,b]$ in an idiom $A$, consider the set $\mathcal{F}=\{f\in P(A)\mid f(a)\wedge b=a\}$, this is non-empty and in \cite{23} Lemma 4.3 or Lemma 5.1 of \cite{25} it is proved that $\mathcal{F}$ satisfy:
\begin{enumerate}
\item $\mathcal{F}$ is directed.
\item $\bigvee\mathcal{F}\in\mathcal{F}$.
\item $\bigvee\mathcal{F}$ is a nucleus.
\end{enumerate}
Name this nucleus $\chi(a,b)$, from the above $j\leq\chi(a,b)\Leftrightarrow j(a)\wedge b=a$ for all $j\in N(A)$. This nucleus is the idiomatic analogue of the torsion theory cogenerated by a module $M$. This nucleus gives some interesting intervals.
\begin{dfn}\label{i91}
An interval $[a,b]$ with $a<b$ is \emph{inert} if for any $a<x\leq b$ then $\chi(a,x)=\chi(a,b)$
\end{dfn} 

\begin{dfn}
An interval $[a,b]$ is \emph{uniform} if it is non-trivial and if $x\wedge y=a$ then $x=a$ or $y=a$ for all $x,y\in [a,b]$.
\end{dfn}

\begin{obs}
\begin{enumerate}
	\item For each inert interval $[a,b]$, the nucleus $\chi(a,b)$ is a point (a $\wedge$-irreducible element) in $N(A)$.
	\item All uniform interval are inert.
	\item Let $\mathcal{C}$ be a congruence set in $A$. If $[a,b]\in\EuScript{C}rt(\mathcal{C})-\mathcal{C}$ then $[a,b]$ is uniform and hence inert.
\end{enumerate}
\end{obs}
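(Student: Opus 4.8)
The plan is to treat the three assertions in turn, relying throughout on the Galois-type characterization $j\le\chi(a,b)\Leftrightarrow j(a)\wedge b=a$ valid for every $j\in N(A)$, together with the fact that $\chi(a,b)$ itself belongs to $\mathcal F$, so that $\chi(a,b)(a)\wedge b=a$. I record one preliminary inequality: whenever $a<x\le b$, every prenucleus $f$ with $f(a)\wedge b=a$ also satisfies $f(a)\wedge x=a$ (since $a\le f(a)\wedge x\le f(a)\wedge b=a$), so $\mathcal F_{[a,b]}\subseteq\mathcal F_{[a,x]}$ and hence $\chi(a,b)\le\chi(a,x)$ always holds; inertness is precisely the reverse comparison.

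For (1), the aim is to show that $\chi(a,b)$ is $\wedge$-irreducible in $N(A)$. I would suppose $\chi(a,b)=j\wedge k$ with $j,k\in N(A)$ and, for a contradiction, that $\chi(a,b)<j$ and $\chi(a,b)<k$. Since infima of nuclei are computed pointwise, $j(a)\wedge k(a)\wedge b=(j\wedge k)(a)\wedge b=\chi(a,b)(a)\wedge b=a$. Setting $y=k(a)\wedge b$, the strict inequality $\chi(a,b)<k$ forces $k(a)\wedge b>a$ via the characterization, so $a<y\le b$; moreover $j(a)\wedge y=j(a)\wedge k(a)\wedge b=a$, whence $j\le\chi(a,y)$. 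Inertness now gives $\chi(a,y)=\chi(a,b)$, so $j\le\chi(a,b)$, contradicting $\chi(a,b)<j$. This is the step I expect to be the main obstacle: the non-obvious move is to test the factor $j$ against the shrunken interval $[a,y]$ cut out by the \emph{other} factor $k$, and it is exactly inertness that lets one pull $\chi(a,y)$ back down to $\chi(a,b)$.

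For (2), fix $a<x\le b$; by the preliminary remark it suffices to prove $\chi(a,x)\le\chi(a,b)$, i.e. $\chi(a,x)(a)\wedge b=a$. I would set $w=\chi(a,x)(a)\wedge b$, note $w\in[a,b]$, and compute $w\wedge x=\chi(a,x)(a)\wedge(b\wedge x)=\chi(a,x)(a)\wedge x=a$, the last equality because $\chi(a,x)\in\mathcal F_{[a,x]}$. Since $x\neq a$, uniformity applied to $w\wedge x=a$ forces $w=a$, which is exactly $\chi(a,x)\le\chi(a,b)$; combined with the preliminary inequality this gives $\chi(a,x)=\chi(a,b)$.

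For (3), I would first rule out $a=b$: as $\EuScript O$ is the bottom of the frame $\EuScript B(A)$ we have $\EuScript O\subseteq\mathcal C$, so no trivial interval can lie in $\EuScript{C}rt(\mathcal C)\setminus\mathcal C$, forcing $a<b$. For uniformity, take $x,y\in[a,b]$ with $x\wedge y=a$ and suppose $x>a$ and $y>a$. Criticality of $[a,b]$ yields $[x,b],[y,b]\in\mathcal C$. From $x\wedge y=a$ the intervals $[a,y]$ and $[x,x\vee y]$ are similar, and $[x,x\vee y]$ is a subinterval of $[x,b]$ because $x\vee y\le b$; hence basicness gives $[x,x\vee y]\in\mathcal C$ and then closure under $\sim$ gives $[a,y]\in\mathcal C$. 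Finally, closure under abutting intervals applied to $[a,y],[y,b]\in\mathcal C$ produces $[a,b]\in\mathcal C$, contradicting $[a,b]\notin\mathcal C$; therefore $x=a$ or $y=a$, so $[a,b]$ is uniform, and it is inert by (2).
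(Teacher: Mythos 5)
Your proof is correct; note that the paper states this remark without any proof at all, so your write-up supplies a verification the authors omit rather than paralleling an existing argument. Checking it against the paper's definitions: in (1), the pointwise computation of infima in $N(A)$ and the characterization $j\le\chi(a,b)\Leftrightarrow j(a)\wedge b=a$ are both available from the paper, and your key move --- testing the factor $j$ against the shrunken interval $[a,\,k(a)\wedge b]$ cut out by the other factor $k$, then using inertness to collapse $\chi(a,y)$ back to $\chi(a,b)$ --- is exactly what makes the claim go through. In (2) the computation $w\wedge x=\chi(a,x)(a)\wedge x=a$ followed by uniformity is complete, and together with your preliminary inequality $\chi(a,b)\le\chi(a,x)$ it yields equality. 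In (3) you correctly dispose of the degenerate case (trivial intervals lie in every basic set, since any basic set contains some $[c,c]$, which is similar to $[\underline{0},\underline{0}]$, which in turn is similar to every $[e,e]$; hence $[a,b]\notin\mathcal{C}$ forces $a<b$), and the chain $[a,y]\sim[x,x\vee y]\rightarrow[x,b]\in\mathcal{C}$, followed by the abutting-intervals rule applied to $[a,y]$ and $[y,b]$, uses precisely the three closure properties defining a congruence set. One small point worth making explicit in (1): a point is normally required to be a proper element of $N(A)$, and this holds here because $\chi(a,b)(a)\wedge b=a<b$ shows $\chi(a,b)$ is not the top nucleus.
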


\begin{obs}
 Given an interval $[a,b]$ on $A$ by the completeness of $\EuScript{D}(A)$ we can consider the least division set that contain the interval $[a,b]$, denote this division set by $\mathcal{D}(a,b)$ and the corresponding nucleus $\xi(a,b)\in N(A)$ then $\xi(a,b)\leq k\Leftrightarrow b\leq k(a)$.
\end{obs}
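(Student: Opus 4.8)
The plan is to pass through the order isomorphism $\EuScript{D}(A)\cong N(A)$ recalled above (due to Simmons \cite{23}), under which a nucleus $k$ corresponds to the division set $\mathcal{D}_{k}=\{[c,d]\mid k(c)=k(d)\}$, and under which $\xi(a,b)$ is by construction the image of the least division set $\mathcal{D}(a,b)$ containing $[a,b]$. First I would verify that this bijection is order-preserving. Given nuclei $j\leq j'$, the identity $j'j=j'$ holds: monotonicity gives $j'(x)\leq j'(j(x))$, while $j\leq j'$ together with idempotence gives $j'(j(x))\leq j'(j'(x))=j'(x)$. Consequently $j(c)=j(d)$ forces $j'(c)=j'(d)$, i.e. $\mathcal{D}_{j}\subseteq\mathcal{D}_{j'}$, and the inverse correspondence is treated the same way. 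Hence $\xi(a,b)\leq k$ is equivalent to $\mathcal{D}(a,b)\subseteq\mathcal{D}_{k}$.

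Next I would reduce this inclusion to a membership and then to the desired inequality. Because $\mathcal{D}(a,b)$ is by definition the \emph{least} division set containing $[a,b]$ and $\mathcal{D}_{k}$ is itself a division set, the inclusion $\mathcal{D}(a,b)\subseteq\mathcal{D}_{k}$ holds precisely when $[a,b]\in\mathcal{D}_{k}$, that is, when $k(a)=k(b)$. It then remains to check the elementary equivalence $k(a)=k(b)\Leftrightarrow b\leq k(a)$: the forward direction is $b\leq k(b)=k(a)$, and for the converse, $b\leq k(a)$ together with $a\leq b$ and the monotonicity and idempotence of $k$ gives $k(b)\leq k(k(a))=k(a)\leq k(b)$. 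Chaining these equivalences yields $\xi(a,b)\leq k\Leftrightarrow b\leq k(a)$.

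The only genuinely substantive step is the first one, namely that the correspondence $\EuScript{D}(A)\cong N(A)$ is an order isomorphism with $\xi(a,b)$ the image of $\mathcal{D}(a,b)$; for the isomorphism itself I would simply cite \cite{23}, and supply the short computation $j'j=j'$ above only to pin down the direction of the order. Everything after that is a routine unwinding of the definition of $\mathcal{D}_{k}$ and of the fact that a nucleus is an idempotent monotone inflator, so I do not expect any real obstacle beyond this bookkeeping.
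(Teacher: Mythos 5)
Your proof is correct and follows precisely the route the paper intends: the remark is stated without proof because it is regarded as immediate from the isomorphism $\EuScript{D}(A)\cong N(A)$ recalled in the preliminaries, and your argument just makes that reduction explicit (order-compatibility of $k\mapsto\mathcal{D}_{k}$, minimality of $\mathcal{D}(a,b)$, and the elementary equivalence $k(a)=k(b)\Leftrightarrow b\leq k(a)$ for an idempotent monotone inflator $k$). There is no gap; the one step you gloss over, that the correspondence reflects order, follows either from citing the isomorphism itself or from the evident monotonicity of $\mathcal{B}\mapsto|\mathcal{B}|$.
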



We can consider the inflators $\EuScript{D}vs$, $\EuScript{C}rt$ on the base frame $\EuScript{B}(A)$. Denote $\EuScript{G}ab=\EuScript{D}vs\circ \EuScript{C}rt$. We know that for every nucleus $j$ on $A$ correspond a unique division set on $A$, $\mathcal{D}_{j}$. Consider the division set $\EuScript{G}ab(\mathcal{D}_{j})=\mathcal{D}_{k}$ for some $k\in N(A)$. This defines a pre-nucleus on $N(A)$, denote this two pre-nuclei by $Gab$.

In in \cite{28} Theorem 3.7 is probed next description of $Gab$:

\begin{thm}\label{i92}
Let $A$ be an idiom and consider any nucleus $j\in N(A)$. Then \[Gab(j)=\bigvee\{\xi(a,b)\mid [a,b]\in\EuScript{C}rt(\mathcal{D}_{j})\}\]
\end{thm}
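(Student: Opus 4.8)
The plan is to transport the statement to the frame $\EuScript{D}(A)$ of division sets, via the isomorphism $N(A)\cong\EuScript{D}(A)$ recalled in Section \ref{pre}, and then reduce everything to the two ``adjunction'' characterizations already available: $[a,b]\in\mathcal{D}_k\Leftrightarrow k(a)=k(b)$ for the division set of a nucleus $k$, and $\xi(a,b)\leq k\Leftrightarrow b\leq k(a)$ for the nucleus attached to a single interval. Write $g:=\bigvee\{\xi(a,b)\mid [a,b]\in\EuScript{C}rt(\mathcal{D}_j)\}$, which is a nucleus since $N(A)$ is complete. By the definition of the pre-nucleus $Gab$, the nucleus $k:=Gab(j)$ is the one whose division set is $\mathcal{D}_k=\EuScript{D}vs(\EuScript{C}rt(\mathcal{D}_j))$. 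Thus the goal collapses to the single equality $g=k$, which I would establish by two inequalities.

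For $g\leq k$: since $\EuScript{D}vs$ is an inflator on the base frame, $\EuScript{C}rt(\mathcal{D}_j)\subseteq\EuScript{D}vs(\EuScript{C}rt(\mathcal{D}_j))=\mathcal{D}_k$. Hence every $[a,b]\in\EuScript{C}rt(\mathcal{D}_j)$ lies in $\mathcal{D}_k$, i.e.\ $k(a)=k(b)$, so $b\leq k(b)=k(a)$ and therefore $\xi(a,b)\leq k$ by the characterization of $\xi$. Taking the supremum over all such intervals yields $g\leq k$. For $k\leq g$: it suffices to show $\mathcal{D}_k\subseteq\mathcal{D}_g$. Because $\mathcal{D}_k=\EuScript{D}vs(\EuScript{C}rt(\mathcal{D}_j))$ is the least division set containing $\EuScript{C}rt(\mathcal{D}_j)$ and $\mathcal{D}_g$ is already a division set, it is enough to check $\EuScript{C}rt(\mathcal{D}_j)\subseteq\mathcal{D}_g$ and then apply the monotone, idempotent $\EuScript{D}vs$ (using $\EuScript{D}vs(\mathcal{D}_g)=\mathcal{D}_g$). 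So fix $[a,b]\in\EuScript{C}rt(\mathcal{D}_j)$; by definition $\xi(a,b)\leq g$, hence $b\leq g(a)$. Applying the nucleus $g$ and using idempotency gives $g(b)\leq g(g(a))=g(a)$, while $a\leq b$ gives $g(a)\leq g(b)$, so $g(a)=g(b)$, i.e.\ $[a,b]\in\mathcal{D}_g$, as required.

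The individual computations are short, so I expect the main obstacle to be conceptual bookkeeping rather than any hard estimate. The delicate point is making rigorous the reduction ``$\mathcal{D}_k\subseteq\mathcal{D}_g$ follows from $\EuScript{C}rt(\mathcal{D}_j)\subseteq\mathcal{D}_g$'': this relies precisely on the fact, recalled from \cite{23}, that $\EuScript{D}vs$ is a nucleus on $\EuScript{B}(A)$ whose fixed points are exactly the division sets, so that $\EuScript{D}vs(\EuScript{C}rt(\mathcal{D}_j))$ really is the \emph{least} division set above $\EuScript{C}rt(\mathcal{D}_j)$ and $\EuScript{D}vs$ fixes $\mathcal{D}_g$. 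One must also be careful to pass freely between inequalities of nuclei in $N(A)$ and inclusions of division sets in $\EuScript{D}(A)$ (legitimate since the correspondence is an order isomorphism), and to note that $g$ is genuinely idempotent, being a nucleus. Beyond this, there is no need to compute joins in the quotient frame $\EuScript{D}(A)$ explicitly, since the equivalence $\xi(a,b)\leq k\Leftrightarrow b\leq k(a)$ already encodes the relevant adjunction and does all the work.
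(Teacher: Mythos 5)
Your argument is correct, but there is a point of comparison you could not have known: the paper does not prove Theorem \ref{i92} at all --- it is imported verbatim from Simmons \cite{28} (Theorem 3.7), so there is no in-paper proof to measure yours against. Judged on its own, your proposal is a valid, essentially complete derivation from exactly the facts the paper recalls in Section \ref{pre}: that $\EuScript{D}vs$ is a nucleus on the base frame $\EuScript{B}(A)$ whose fixed points are precisely the division sets, that $j\mapsto\mathcal{D}_{j}$ is an order isomorphism between $N(A)$ and $\EuScript{D}(A)$, and the adjunction $\xi(a,b)\leq k\Leftrightarrow b\leq k(a)$ (equivalently $[a,b]\in\mathcal{D}_{k}$). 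Both of your inequalities are sound: for $g\leq Gab(j)$ you only need $\EuScript{C}rt(\mathcal{D}_{j})\subseteq\EuScript{D}vs(\EuScript{C}rt(\mathcal{D}_{j}))=\mathcal{D}_{Gab(j)}$, which holds because $\EuScript{D}vs$ is inflatory; for $Gab(j)\leq g$ you verify $\EuScript{C}rt(\mathcal{D}_{j})\subseteq\mathcal{D}_{g}$ interval by interval and then invoke the minimality of $\EuScript{D}vs(\EuScript{C}rt(\mathcal{D}_{j}))$ among division sets containing $\EuScript{C}rt(\mathcal{D}_{j})$. The only properties of the supremum in $N(A)$ you use are that $g$ is an upper bound of the $\xi(a,b)$ and that $g$ is itself an (idempotent) nucleus, both guaranteed by the completeness of $N(A)$; you correctly avoid any explicit computation of joins of nuclei. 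In effect, your proof makes explicit that both sides of the identity are the image of $\EuScript{C}rt(\mathcal{D}_{j})$ under the reflection onto division sets, transported across the isomorphism $N(A)\cong\EuScript{D}(A)$ --- precisely the content that the citation to \cite{28} leaves implicit --- so your argument has the added value of making this part of the paper self-contained, at the cost of leaning on the unproved (but recalled) structural facts from \cite{23}.
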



Following Simmons (Definition 7.8 of \cite{28}), we say that a point $\pi\in N(A)$ is a \rm{$G$}\emph{-point} if $\pi<Gab(\pi)$.
With all this we can perform the idiomatic version of Proposition \ref{i7} and Corollary \ref{i8}.

\begin{prop}\label{i81}
The following are equivalent for an idiom $A$
\begin{itemize}
\item[(1)] $u_{Gab(d_{\underline{0}})}=Gab$.
\item[(2)] \rm{Gpt}$(N(A))=\{\chi(a,b)\mid[a,b]\in\EuScript{S}mp\}$
Here \rm{Gpt}$(N(A))$ denote the $G$-points of $N(A)$ given by $j$-critical interval for some nucleus $j$.
\end{itemize}
\end{prop}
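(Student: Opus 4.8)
The plan is to transport everything into the frame $N(A)$ (a frame by Theorem \ref{014}) and to replay the argument of Proposition \ref{i7} under the dictionary ``$\tau$-cocritical module'' $\leftrightarrow$ ``$j$-critical interval $[a,b]$'', ``$\chi(M)$'' $\leftrightarrow$ ``$\chi(a,b)$'', ``$\xi(N)$'' $\leftrightarrow$ ``$\xi(a,b)$'', ``simple module'' $\leftrightarrow$ ``simple interval''. First I would unwind $(1)$: since $u_{Gab(d_{\underline{0}})}$ is the inflator $j\mapsto j\vee Gab(d_{\underline{0}})$ on $N(A)$, the identity $u_{Gab(d_{\underline{0}})}=Gab$ reads $Gab(j)=j\vee Gab(d_{\underline{0}})$ for every $j\in N(A)$; because $Gab$ is a monotone inflator and $d_{\underline{0}}$ is the bottom of $N(A)$, the comparison $j\vee Gab(d_{\underline{0}})\le Gab(j)$ is automatic, so $(1)$ is equivalent to $Gab(j)\le j\vee Gab(d_{\underline{0}})$ for all $j$. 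Next I would evaluate $Gab(d_{\underline{0}})$ via Theorem \ref{i92}: as $d_{\underline{0}}$ is the identity nucleus, $\mathcal{D}_{d_{\underline{0}}}=\EuScript{O}$ and $\EuScript{C}rt(\mathcal{D}_{d_{\underline{0}}})=\EuScript{C}rt(\EuScript{O})=\EuScript{S}mp$, so that
\[ G_{0}:=Gab(d_{\underline{0}})=\bigvee\{\xi(a,b)\mid [a,b]\in\EuScript{S}mp\}. \]
Throughout I would lean on the two adjunctions $\xi(a,b)\le k\Leftrightarrow b\le k(a)$ and $k\le\chi(a,b)\Leftrightarrow k(a)\wedge b=a$, on the Remark that a critical interval outside the ambient congruence set is uniform hence inert (so $\chi(a,x)=\chi(a,b)$ for $a<x\le b$), and on the elementary fact that $\xi(a,b)\not\le\chi(a,b)$ for a non-trivial $[a,b]$ (since $\chi(a,b)(a)\wedge b=a<b$ forbids $b\le\chi(a,b)(a)$).

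For $(1)\Rightarrow(2)$ I would prove two inclusions. The inclusion $\{\chi(a,b)\mid[a,b]\in\EuScript{S}mp\}\subseteq\mathrm{Gpt}(N(A))$ is short: for simple $[a,b]$ the nucleus $\chi(a,b)$ is a point (simple intervals are exactly the $\EuScript{O}$-critical ones), and since $\xi(a,b)\le G_{0}$ while $\xi(a,b)\not\le\chi(a,b)$, one gets $G_{0}\not\le\chi(a,b)$, whence $\chi(a,b)<\chi(a,b)\vee G_{0}=Gab(\chi(a,b))$ by $(1)$, so $\chi(a,b)$ is a $G$-point. For the reverse inclusion I would take a $G$-point $\pi=\chi(a,b)$ arising from a $j$-critical interval; from $\pi<Gab(\pi)=\pi\vee G_{0}$ I read off $G_{0}\not\le\pi$, produce a simple interval whose $\xi$ is not $\le\pi$, and then use uniformity and inertness of $[a,b]$ to locate a simple subinterval $[a,z]$ with $a<z\le b$ and conclude $\pi=\chi(a,z)=\chi(a,b)$.

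For $(2)\Rightarrow(1)$ I would check $Gab(j)\le j\vee G_{0}$, which by Theorem \ref{i92} reduces to $\xi(a,b)\le j\vee G_{0}$ for each $[a,b]\in\EuScript{C}rt(\mathcal{D}_{j})$. If $[a,b]\in\mathcal{D}_{j}$ then $b\le j(a)$, so $\xi(a,b)\le j$. Otherwise $[a,b]$ is uniform, $\chi(a,b)$ is a point, and (using that such a critical interval is itself $\chi(a,b)$-critical, so $\xi(a,b)\le Gab(\chi(a,b))$ while $\xi(a,b)\not\le\chi(a,b)$) it is a $G$-point; applying $(2)$ yields a simple interval $[c,e]$ with $\chi(a,b)=\chi(c,e)$, and comparing the associated $\xi$'s gives $\xi(a,b)\le j\vee\xi(c,e)\le j\vee G_{0}$.

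The main obstacle is the identification step shared by both directions: converting the purely order-theoretic information $G_{0}\not\le\pi$ (respectively the conclusion of $(2)$) into an honest \emph{simple subinterval} of the critical interval $[a,b]$ and verifying that its $\chi$ equals $\pi=\chi(a,b)$. This is the idiomatic substitute for the module-theoretic use of the essential simple socle of a uniform cocritical module, and it is exactly where inertness of uniform intervals and the meet-irreducibility of points of $N(A)$ must be combined. The delicate bookkeeping is passing from $\xi(a,b)\le\pi\vee G_{0}$ to a single joinand $\xi(c,e)$ of $G_{0}$, since joins of nuclei do not break past a point in the naive distributive way; establishing that a uniform interval is $\chi(a,b)$-critical (so that Theorem \ref{i92} feeds $\xi(a,b)$ into $Gab(\chi(a,b))$) is the lemma I expect to need and to cost the most care.
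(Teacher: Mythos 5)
Your plan is, in substance, the paper's own proof: the same unwinding of $(1)$ to $Gab(j)\le j\vee Gab(d_{\underline{0}})$, the same computation $Gab(d_{\underline{0}})=\bigvee\{\xi(x,y)\mid [x,y]\in\EuScript{S}mp\}$ from Theorem \ref{i92}, and the same two adjunctions together with $\xi(a,b)\not\le\chi(a,b)$ for non-trivial $[a,b]$. Two of your additions are genuine improvements on the paper: the verification that $\chi(a,b)$ is a $G$-point (which you get correctly from $j\le\chi(a,b)$, monotonicity of $\EuScript{C}rt$, hence $[a,b]\in\EuScript{C}rt(\mathcal{D}_{\chi(a,b)})$, hence $\xi(a,b)\le Gab(\chi(a,b))$) is merely asserted in the paper although it is needed in both directions, and the inclusion $\{\chi(a,b)\mid[a,b]\in\EuScript{S}mp\}\subseteq\text{\rm Gpt}(N(A))$ is omitted there entirely. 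Your $(2)\Rightarrow(1)$ coincides with the paper's. Also, your worry about joins of nuclei ``not breaking past a point'' is unfounded: from $G_{0}\not\le\pi$ you get a single joinand $\xi(x,y)\not\le\pi$ by the definition of supremum alone; no distributivity or meet-irreducibility is involved.

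The one genuine gap is the identification step in $\text{\rm Gpt}(N(A))\subseteq\{\chi(a,b)\mid[a,b]\in\EuScript{S}mp\}$, where you propose to ``use uniformity and inertness of $[a,b]$ to locate a simple subinterval $[a,z]$ with $a<z\le b$.'' Uniformity and inertness cannot produce such a subinterval: in the idiom $\text{Sub}_{\mathbb{Z}}(\mathbb{Z})$ the interval $[0,\mathbb{Z}]$ is uniform, hence inert, but no bottom subinterval $[0,m\mathbb{Z}]$ with $m\neq 0$ is simple (it contains $2m\mathbb{Z}$ strictly in between). What rescues the step is precisely the datum you set aside, namely the simple interval $[x,y]$ with $\xi(x,y)\not\le\chi(a,b)$: then $a<\xi(x,y)(a)\wedge b$ and $[a,\xi(x,y)(a)\wedge b]\in\mathcal{D}_{\xi(x,y)}$; since $y\le soc(x)\le soc^{\infty}(x)$ gives $\xi(x,y)\le soc^{\infty}$, this interval lies in $\mathcal{D}_{soc^{\infty}}=\EuScript{S}\EuScript{A}$ (7.11 of \cite{27}, quoted in the preliminaries), so it is atomic and therefore contains a simple subinterval $[a,z]$ at the bottom; only now does inertness yield $\chi(a,z)=\chi(a,b)=\pi$. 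Alternatively you can avoid needing the simple subinterval at the bottom at all, which is what the paper does: it extracts from $\mathcal{D}_{\xi(x,y)}$ a subinterval of $[a,b]$ similar to $[x,y]$ located anywhere, and then proves the two comparisons $\chi(a,b)\le\chi(x,y)$ (if $y\le\chi(a,b)(x)$ then $\xi(x,y)\le\chi(a,b)$, a contradiction, so $\chi(a,b)(x)\wedge y=x$ by simplicity) and $\chi(x,y)\le\chi(a,b)$ (using $j$-criticality and the similar copy inside $[a,b]$), concluding $\pi=\chi(x,y)$ directly. Either repair closes the argument; as literally written, your step would fail.
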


\begin{proof}
Suppose $(1)$, and consider any $[a,b]\in\EuScript{C}rt(\mathcal{D}_{j})-\mathcal{D}_{j}$ for some nucleus $j$ on $A$, then $[a,b]$ is inert so $\chi(a,b)$ is a point in $N(A)$, in fact this point is a $G$-point, that is, $Gab(\chi(a,b))\neq \chi(a,b)$, and for $(1)$ $Gab(\chi(a,b))=\bigvee\{\xi(x,y)\mid[x,y]\in \EuScript{C}rt(\mathcal{D}_{\chi(a,b)})\}=\chi(a,b)\vee \bigvee\{\xi(x,y)\mid[x,y]\in\EuScript{S}mp\}$ hence, there is an interval $[x,y]\in\EuScript{S}mp$ such that $\xi(x,y)$ do not collapse the interval $[a,b]$, then $a\leq\xi(x,y)(a)\wedge b\leq b$ therefore $[a,\xi(x,y)(a)\wedge b]\in\mathcal{D}_{\xi(x,y)}$ and by construction, this division set is $\EuScript{D}vs(\mathcal{B}(x,y))$. Then we can find a proper sub-interval of $[a,\xi(x,y)(a)\wedge b]$ similar to $[x,y]$ (therefore it is a subinterval of $[a,b]$), because the interval $[x,y]$ is simple.
Now if we have $\chi(a,b)(x)\wedge y=y$ then $\xi(x,y)\leq\chi(a,b)$, which is a contradiction of the above argument. Thus one necessarily have $j\leq\chi(a,b)\leq \chi(x,y)$.
To see the other comparison suppose $a<\chi(x,y)(a)\leq b$, since $[a,b]$ is $j$-critical, we obtain $[\chi(x,y)(a)\wedge b,b]\in\mathcal{D}_{j}$, that is, $b\leq j(\chi(x,y)(a))\leq b$ thus $j(b)\leq \chi(x,y)(a)$ (by $j\leq \chi(x,y)$ and both are idempotent) but this implies that the interval $[a,b]$ is collapsed by $\chi(x,y)$ and consequently $[x,y]$ so is, which is a contradiction to the property of $\chi(x,y)$, therefore $\chi(x,y)(a)\wedge b=a$, that is, $\chi(x,y)=\chi(a,b)$.

If we suppose $(2)$, take any $[a,b]\in\EuScript{C}rt(\mathcal{D}_{j})-\mathcal{D}_{j}$ with $j$ a nucleus different of $\bar{d}$, this interval is uniform hence inert so $\chi(a,b)$ is a point in $N(A)$, then by $(2)$ exists a simple interval $[x,y]$ such that $\chi(a,b)=\chi(x,y)$, from this we have $j\leq\chi(x,y)$. Observe that for the nucleus $\xi(a,b)$ we have $x\leq\xi(a,b)(x)\wedge y\leq y$, by simplicity of the interval and the construction of $\xi(a,b)$ we conclude that $\xi(a,b)(x)\wedge y=y$, that is, $[x,y]$ is collapsed by this nucleus, therefore $\xi(x,y)\leq\xi(a,b)$. If we have $\xi(x,y)(a)\wedge b<b$ then $[\xi(x,y)(a)\wedge b,b]\in\mathcal{D}_{j}$ because $[a,b]\in\EuScript{C}rt(\mathcal{D}_{j})$, thus $b\leq j(\xi(x,y)(a))$, that is, $b\leq j\vee \xi(x,y)(a)$ then $\xi(a,b)\leq j\vee\xi(x,y)$, but $\xi(x,y)\leq\xi(a,b)$, therefore $j\vee\xi(x,y)\leq j\vee\xi(a,b)$ and then $j\vee\xi(x,y)=j\vee\xi(a,b)$, which is the same that $Gab(j)=u_{Gab(d_{\underline{0}})}(j)$.
\end{proof}

As a corollary of this we have:

\begin{cor}
\label{i911}

The following are equivalent for an idiom $A$
\begin{itemize}

\item[(1)]  $A$ is $\EuScript{S}\EuScript{A}$.

\item[(2)] $u_{Gab(d_{\underline{0}})}=Gab=\iota_{Gab(\underline{0})}$.

\item[(3)] $Gab=\iota_{Gab(d_{\underline{0}})}$
\end{itemize}
\end{cor}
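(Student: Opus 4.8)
The plan is to run the cycle $(1)\Rightarrow(2)\Rightarrow(3)\Rightarrow(1)$, viewing $Gab$ as an inflator on the idiom $N(A)$ (a frame, by Theorem \ref{014}) with bottom $d_{\underline{0}}$ and top $\bar{d}$. The first thing to record is the comparison $u_{Gab(d_{\underline{0}})}\leq Gab\leq \iota_{Gab(d_{\underline{0}})}$, valid for every inflator as in Theorem \ref{i6}; thus $(2)$ says that the whole $\sim_{t}$-block $[u_{Gab(d_{\underline{0}})},\iota_{Gab(d_{\underline{0}})}]$ of $Gab$ degenerates to the single point $\{Gab\}$, while $(3)$ only asks that $Gab$ be the top of that block. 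In particular $(2)\Rightarrow(3)$ is immediate.

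The clean half of the argument is $(1)\Rightarrow(2)$, and I would deduce it from the sharper claim that if $A$ is strongly atomic then $Gab(j)=\bar{d}$ for every $j\in N(A)$. To prove this, fix $j$ and set $p=Gab(j)(\underline{0})$; since $Gab(j)$ is a nucleus it is idempotent, so $Gab(j)(p)=p$. If $p\neq \bar{1}$, strong atomicity applied to $[p,\bar{1}]$ yields $p<z$ with $[p,z]\in\EuScript{S}mp$, and any simple interval lies in $\EuScript{C}rt(\mathcal{D}_{j})$; hence Theorem \ref{i92} gives $\xi(p,z)\leq Gab(j)$, so $z\leq \xi(p,z)(p)\leq Gab(j)(p)=p$, a contradiction. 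Therefore $p=\bar{1}$ and $Gab(j)=\bar{d}$. Granting this, $Gab$ is the constant inflator $\bar{d}$; since $u_{\bar{d}}$ and $\iota_{\bar{d}}$ are also constantly $\bar{d}$, all three inflators in $(2)$ coincide, which is $(2)$.

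The substantive implication is $(3)\Rightarrow(1)$, which I would attack by contraposition together with the reading of $(3)$ supplied by Theorem \ref{i92}. The same computation as above, run inside a quotient, shows that $Gab(j)=\bar{d}$ holds exactly when the quotient idiom $A_{j}$ is strongly atomic (its $j$-critical intervals are precisely the simple intervals of $A_{j}$, and their $\xi$'s generate, via $\EuScript{D}vs$, all of $\EuScript{I}(A)$ over $\mathcal{D}_{j}$ iff $\EuScript{S}\EuScript{A}(A_{j})=\EuScript{I}(A_{j})$). Thus $(3)$ asserts that every proper quotient $A_{j}$, $j\neq d_{\underline{0}}$, is strongly atomic, and I must derive that $A=A_{d_{\underline{0}}}$ is strongly atomic itself. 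Assuming $A$ is not strongly atomic, I fix an interval $[c,d]$ with $c<d$ in which $c$ has no cover, and aim to manufacture a nucleus $j\neq d_{\underline{0}}$ whose quotient still contains a cover-free point; for such a $j$ no $j$-critical interval is seated at $c$, so $Gab(j)(c)\wedge d=c<d$ and $Gab(j)\neq\bar{d}$, contradicting $(3)$.

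The hard part will be exactly this construction. The obvious candidate $j=\chi(c,d)$ fails: it collapses the whole of $[c,d]$ and turns the obstruction into a simple interval of the quotient — this is already visible for $A=\mathrm{Sub}_{\mathbb{Z}}(\mathbb{Z})$ with $[c,d]=[\underline{0},\mathbb{Z}]$, where $A_{\chi(\underline{0},\mathbb{Z})}$ is a two-element chain. The correct witness must instead preserve the absence of a cover at $c$: using that $(c,d]$ has no minimal element one would choose a coinitial, minimum-free chain $C\subseteq(c,d]$ and build the nucleus $j$ that fixes $C\cup\{c,d\}$ and collapses everything else in $[c,d]$ onto $C$, so that in $A_{j}$ the point $c$ is still uncovered (compare $A_{\chi(p\mathbb{Z},\mathbb{Z})}$, which retains the descending chain $\mathbb{Z}\supset p\mathbb{Z}\supset p^{2}\mathbb{Z}\supset\cdots\supset \underline{0}$). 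Verifying that such a $j$ is a genuine nucleus with $j\neq d_{\underline{0}}$ — or, if one prefers to route through Proposition \ref{i81}, checking that this same failure produces a $G$-point not of the form $\chi(a,b)$ with $[a,b]$ simple — is where the real work lies.
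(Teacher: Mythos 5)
Your first two implications are fine: the argument for $(1)\Rightarrow(2)$ — set $p=Gab(j)(\underline{0})$, note $p$ is $Gab(j)$-fixed, extract a simple interval $[p,z]$ from strong atomicity, observe that every simple interval lies in $\EuScript{C}rt(\mathcal{D}_{j})$ because trivial intervals belong to every division set, and apply Theorem \ref{i92} to force $z\leq Gab(j)(p)=p$ — is complete and correct, and it makes all three inflators in $(2)$ the constant $\bar{d}$; $(2)\Rightarrow(3)$ is trivial. The problem is $(3)\Rightarrow(1)$, which is the entire substance of the corollary, and which you do not prove: you outline a strategy and then concede that verifying its key step ``is where the real work lies.'' As it stands, the proposal establishes only the easy two thirds of the statement.

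Concretely, your contrapositive has two unfilled holes. First, it needs the implication ``$A_{j}$ not strongly atomic $\Rightarrow Gab(j)\neq\bar{d}$'' (in your phrasing, no $\mathcal{D}_{j}$-critical interval seated at $c$ implies $Gab(j)(c)\wedge d=c$). Your $(1)\Rightarrow(2)$ computation proves exactly the \emph{converse} direction. Since $Gab(j)$ is a supremum of nuclei $\xi(a,b)$, and suprema in $N(A)$ are computed by composition and transfinite closure, $Gab(j)$ may move $c$ even though no single critical interval is seated at $c$; what you need is $\xi(a,b)\leq\chi(c,d)$ for \emph{every} $[a,b]\in\EuScript{C}rt(\mathcal{D}_{j})$, and that is essentially Simmons' identification of $Gab(j)$ with the relative socle series of the quotient $A_{j}$ (the theorem of \cite{28} invoked in Remark \ref{obr1}), which you neither prove nor invoke. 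Second, your witness nucleus does not exist by fiat: ``collapse everything in $[c,d]$ onto a coinitial chain $C$'' is in general not a prenucleus, because the identity $j(x\wedge y)=j(x)\wedge j(y)$ fails for chain-closures as soon as the lattice is not a chain — already in the four-element idiom $\{\underline{0},a,b,\bar{1}\}$ with $a\wedge b=\underline{0}$, $a\vee b=\bar{1}$, the closure onto $\{\underline{0},a,\bar{1}\}$ sends $a\wedge b$ to $\underline{0}$ while $j(a)\wedge j(b)=a$. Your supporting examples ($\mathrm{Sub}_{\mathbb{Z}}(\mathbb{Z})$, real intervals) are chains, precisely where this obstruction vanishes. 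A route that avoids inventing nuclei: take $j=Gab(d_{\underline{0}})$, which is $soc^{\infty}$ by the cited result of \cite{28}; if $j\neq d_{\underline{0}}$, then $(3)$ plus the relative theorem makes $A_{soc^{\infty}}$ strongly atomic, yet $A_{soc^{\infty}}$ has no atoms (an atom of $A_{soc^{\infty}}$ gives an interval in $\mathcal{D}_{soc^{\infty}}=\EuScript{S}\EuScript{A}$, hence an $A$-simple interval above $soc^{\infty}(\underline{0})$, contradicting $soc\cdot soc^{\infty}=soc^{\infty}$), so $soc^{\infty}(\underline{0})=\bar{1}$ and $(1)$ follows by Lemma \ref{r1}. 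Even this leaves the case $Gab(d_{\underline{0}})=d_{\underline{0}}$ with $A$ nontrivial to be excluded, and it still rests on the relative theorem; in every version, the missing ingredient is that theorem, not a chain construction, so the gap is one of substance rather than bookkeeping.
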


\section{Dimensions of inflators and totalizers}\label{sec3}

Take any $s\in S(A)$ and consider the set $\mathcal{S}_{t}(s)=\{s'\in S(A)\mid s's=\bar{d}\}$.  Then, $\mathcal{S}_{t}(s)\subseteq \mathcal{I}_{t}(s)$ (Definition \ref{i1}). Hence, $t(s)\leq\bigwedge\mathcal{S}_{t}(s)=\mathfrak{t}(s)$. This new stable inflator $\mathfrak{t}(s)$ is the partial totalizer of $s$ in $S(A)$. This construction can be applied to any pre-nucleus and any nucleus: set $\mathfrak{t}(f)=\bigwedge\mathcal{P}_{t}(f)=\bigwedge\{k\in P(A)\mid kf=\bar{d}\}$ and $\jmath(j)=\bigwedge\mathcal{N}_{t}(j)=\bigwedge\{k\in N(A)\mid kj=\bar{d}\}$. From this and item 9 in Section \ref{pre} we have that $(\mathfrak{t}(s^{\infty}))^{\infty}=\jmath(s^{\infty})$ for any stable inflator $s$. Note also that these observations give us a chain of  inflators $t(s^{\infty})\leq t(s)\leq \mathfrak{t}(s)\leq(\mathfrak{t}(s))^{\infty}$.

We know that if $j$ is a nucleus on $A$, then $A_{j}$ is an idiom, so if we take an inflator $d^{A_{j}}$ on $A_{j}$ we have a diagram \[\xymatrix{ A\ar[r]^{j^{*}} & A_{j} \ar[r]^{d^{A_{j}}} & A_{j}\ar[r]^{j_{*}} & A}\] where $j^{*}(a)=j(a)$ and $j_{*} $ is the inclusion, then $j_{*}\cdot d^{A_{j}}\cdot j^{*}$ is an inflator on $A$. From $j^{*}j_{*}=id_{A}$ we get $(j_{*}t(d^{A_{j}})j^{*})(j_{*}d^{A_{j}}j^{*})=j_{*}t(d^{A_{j}})d^{A_{j}}j^{*}=j_{*}\bar{d}^{A_{j}}j^{*}=\bar{d}$. With this the following is straightforward.

\begin{prop}\label{i9}
Let $A$ be an idiom and $j$ any nucleus. For any $d^{A_{j}}\in I(A_{j})$ we have that
\[t(j_{*}d^{A_{j}}j^{*})\leq j_{*}t(d^{A_{j}})j^{*}.\]
\end{prop}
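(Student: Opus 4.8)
The plan is to exploit the defining characterization of the totalizer as an infimum. Recall from Definition~\ref{i1} that $t(e)=\bigwedge\mathcal{I}_{t}(e)$, where $\mathcal{I}_{t}(e)=\{z\in I(A)\mid ze=\bar{d}\}$. Consequently, to establish an inequality of the form $t(e)\leq w$ for a candidate inflator $w\in I(A)$, it suffices to verify the single identity $we=\bar{d}$, i.e.\ that $w$ belongs to $\mathcal{I}_{t}(e)$. Here I set $e:=j_{*}d^{A_{j}}j^{*}$ and take as candidate $w:=j_{*}t(d^{A_{j}})j^{*}$, both of which are genuine inflators on $A$ by the remark preceding the statement (the conjugate $j_{*}(\,\cdot\,)j^{*}$ of any inflator on the quotient idiom $A_{j}$ is an inflator on $A$). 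So the whole proof reduces to checking $w e=\bar{d}$.

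First I would carry out the composition $we$, which is essentially the computation displayed just before the Proposition. Writing it out,
\[
we=j_{*}t(d^{A_{j}})j^{*}\,j_{*}d^{A_{j}}j^{*}=j_{*}\,t(d^{A_{j}})\,(j^{*}j_{*})\,d^{A_{j}}\,j^{*},
\]
and the key structural fact is that $j^{*}j_{*}=\mathrm{id}_{A_{j}}$: for a fixed point $x\in A_{j}$ one has $j^{*}(j_{*}(x))=j(x)=x$. Substituting this identity collapses the middle factor, leaving $j_{*}\,t(d^{A_{j}})\,d^{A_{j}}\,j^{*}$. Now I invoke the totalizer identity inside the idiom $A_{j}$, namely $t(d^{A_{j}})\,d^{A_{j}}=\bar{d}^{A_{j}}$ (the note at the end of Definition~\ref{i1}, applied in $I(A_{j})$), to obtain $we=j_{*}\,\bar{d}^{A_{j}}\,j^{*}$.

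It remains to see that $j_{*}\bar{d}^{A_{j}}j^{*}=\bar{d}$, and this is where one must be a little careful. The inflator $\bar{d}^{A_{j}}$ sends every element of $A_{j}$ to the top of $A_{j}$; since $j$ is a nucleus we have $\bar{1}\leq j(\bar{1})\leq\bar{1}$, so the top of $A_{j}$ is $\bar{1}$ itself, and $j_{*}$ carries it to $\bar{1}$ in $A$. Hence for every $a\in A$, $(we)(a)=j_{*}\big(\bar{d}^{A_{j}}(j^{*}(a))\big)=j_{*}(\bar{1})=\bar{1}=\bar{d}(a)$, so $we=\bar{d}$ and $w\in\mathcal{I}_{t}(e)$. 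Taking infima gives $t(e)=\bigwedge\mathcal{I}_{t}(e)\leq w=j_{*}t(d^{A_{j}})j^{*}$, which is exactly the claimed inequality.

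I do not expect a serious obstacle here, since the algebraic heart of the argument is the one-line computation already provided; the statement is genuinely, as the text says, straightforward. The only points requiring attention are the three routine verifications that make the infimum argument legitimate: that $w$ really is an inflator on $A$ (so that it is an eligible member of $\mathcal{I}_{t}(e)$), that the correct composite identity is $j^{*}j_{*}=\mathrm{id}_{A_{j}}$ rather than an identity on $A$, and that $j$ fixes the top so that $j_{*}\bar{d}^{A_{j}}j^{*}$ is the constant top inflator $\bar{d}$ on $A$. Note also that one should not expect the reverse inequality in general, precisely because the operator $t$ is not monotone (Remark~\ref{remark6}), which is consistent with obtaining only the one-sided comparison.
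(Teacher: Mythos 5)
Your proof is correct and follows exactly the paper's own route: you show that $w=j_{*}t(d^{A_{j}})j^{*}$ totalizes $e=j_{*}d^{A_{j}}j^{*}$ via the computation $we=j_{*}t(d^{A_{j}})d^{A_{j}}j^{*}=j_{*}\bar{d}^{A_{j}}j^{*}=\bar{d}$, and then conclude from the infimum definition of the totalizer, which is precisely the argument the paper gives in the paragraph preceding the Proposition. Your added care in noting that the composite identity is $j^{*}j_{*}=\mathrm{id}_{A_{j}}$ (the paper misprints $\mathrm{id}_{A}$) and that $j$ fixes $\bar{1}$ so that $j_{*}\bar{d}^{A_{j}}j^{*}=\bar{d}$ only tightens the same proof.
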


Now, take any inflator $k$ and define $\mu^{k}\colon I(A)\rightarrow I(A)$ by $\mu^{k}(d)=dk$. Then, from item \ref{item7} of Section \ref{pre}, $\mu^{k}$ is a pre-nucleus. If the inflator $k$ is idempotent, then $\mu^{k}$ is a nucleus on $I(A)$. Moreover, if we start with $S(A)$, then it is an idiom, thus by the above we have that $\mu^{(\_)}$ transforms any stable inflator into a pre-nucleus on $S(A)$. Note that $\mu^{(\_)}\colon I(A)\rightarrow P(I(A))$ is an embedding. Now consider any inflator $s\in I(A)$. Then, the pre-nucleus $\mu^{s}$ has a negation $\neg(\mu^{s})$ and this is a nucleus on $I(A)$.

\begin{prop}

\label{i10}
Let $s$ be any inflator on an idiom $A$. Then, $\neg(\mu^{s})\leq \mu^{t(s)}$ in $N(I(A))$.
\end{prop}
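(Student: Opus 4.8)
The plan is to view both $\neg(\mu^{s})$ and $\mu^{t(s)}$ as elements of the frame $N(I(A))$ (Theorem \ref{014} applied to the idiom $I(A)$) and to reduce the stated comparison to the single identity that $\mu^{s}$ and $\mu^{t(s)}$ join to the top of $N(I(A))$. First I would record that $\mu^{t(s)}$ really is a nucleus. By Theorem \ref{i3} we have $t(s)=O_{s(\underline{0})}$, and a direct check shows that every $O_{b}$ is idempotent (if $a\geq b$ then $O_{b}(a)=\bar{1}\geq b$, so $O_{b}(O_{b}(a))=\bar{1}$; if $a\not\geq b$ then $O_{b}(a)=a\not\geq b$, so $O_{b}(O_{b}(a))=a$). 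Since $\mu^{k}$ is a nucleus whenever $k$ is idempotent, $\mu^{t(s)}\in N(I(A))$, and so $\neg(\mu^{s})\leq\mu^{t(s)}$ becomes a comparison of two nuclei in the frame $N(I(A))$, where the order is pointwise.

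The key step is to prove that $\mu^{s}\vee\mu^{t(s)}=\top$ in $N(I(A))$, where $\top$ is the nucleus sending every inflator to $\bar{d}$. For this I would first compute the composite: for any $d\in I(A)$, using associativity of the product, the totalizer identity $t(s)s=\bar{d}$ of Definition \ref{i1}, and $d\bar{d}=\bar{d}$, one gets
\[(\mu^{s}\circ\mu^{t(s)})(d)=d\,t(s)\,s=d\,\bar{d}=\bar{d},\]
so $\mu^{s}\circ\mu^{t(s)}=\top$. Now suppose $p\in N(I(A))$ satisfies $p\geq\mu^{s}$ and $p\geq\mu^{t(s)}$. Then, since $p$ is monotone and idempotent, for every $d$ we have $p(d)=p(p(d))\geq\mu^{s}(\mu^{t(s)}(d))=\bar{d}$, whence $p=\top$. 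As $\top$ itself lies above $\mu^{s}$ and $\mu^{t(s)}$, it is their least common upper bound, i.e. $\mu^{s}\vee\mu^{t(s)}=\top$.

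Finally I would invoke frame distributivity in $N(I(A))$ together with the defining property $\neg(\mu^{s})\wedge\mu^{s}=\underline{0}$ of the pseudocomplement, where $\underline{0}$ is the bottom of $N(I(A))$:
\[\neg(\mu^{s})=\neg(\mu^{s})\wedge(\mu^{s}\vee\mu^{t(s)})=(\neg(\mu^{s})\wedge\mu^{s})\vee(\neg(\mu^{s})\wedge\mu^{t(s)})=\neg(\mu^{s})\wedge\mu^{t(s)}\leq\mu^{t(s)},\]
which is the assertion. The one delicate point is precisely the computation of the join in $N(I(A))$: because suprema of nuclei are not computed pointwise but through the composition-and-$\infty$ construction recalled in Section \ref{pre}, the equality $\mu^{s}\vee\mu^{t(s)}=\top$ cannot be read off directly, and the idempotency argument of the second paragraph is exactly what supplies it.
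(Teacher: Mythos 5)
Your key computation $(\mu^{s}\mu^{t(s)})(d)=d\,t(s)\,s=d\,\bar{d}=\bar{d}$ is correct and is exactly the central step of the paper's own proof, and your upper-bound argument (any nucleus lying above both factors must be the top) is also correct. The genuine gap is in the frame-theoretic packaging: $\mu^{s}$ is in general \emph{not} an element of $N(I(A))$. The paper guarantees only that $\mu^{k}$ is a pre-nucleus for an arbitrary inflator $k$, and a nucleus when $k$ is idempotent; here $\mu^{s}(\mu^{s}(d))=ds^{2}$, so $\mu^{s}$ is idempotent only when $s$ is (evaluate at $d=d_{\underline{0}}$), and $s$ is an arbitrary inflator. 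You checked idempotency carefully for $\mu^{t(s)}$ but not for $\mu^{s}$, where it actually fails. Consequently two of your steps are not licensed as written: ``$\mu^{s}\vee\mu^{t(s)}=\top$ in $N(I(A))$'' is not a join of two elements of $N(I(A))$, and the appeal to frame distributivity (Theorem \ref{014}) in the chain $\neg(\mu^{s})\wedge(\mu^{s}\vee\mu^{t(s)})=(\neg(\mu^{s})\wedge\mu^{s})\vee(\neg(\mu^{s})\wedge\mu^{t(s)})$ requires all entries to lie in the frame $N(I(A))$. Since the conclusion of that chain is precisely the proposition, nothing short of a full justification of it will do.

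The strategy can be repaired, but at the cost of an extra lemma you do not supply: replace $\mu^{s}$ by its closure $\widehat{\mu^{s}}=(\mu^{s})^{\infty}\in N(I(A))$; your upper-bound argument still gives $\widehat{\mu^{s}}\vee\mu^{t(s)}=Tp$ in $N(I(A))$, because a nucleus lies above $\mu^{s}$ if and only if it lies above $\widehat{\mu^{s}}$ (by monotonicity of $(\_)^{\infty}$). But distributivity then needs $\neg(\mu^{s})\wedge\widehat{\mu^{s}}=id_{I(A)}$, whereas the property one actually has is $\neg(\mu^{s})\wedge\mu^{s}=id_{I(A)}$; passing from the second to the first requires an ordinal induction ($p\wedge f=id$ forces $p\wedge f^{\alpha}=id$ for every $\alpha$, using meet-continuity at limit stages), which is absent from your write-up. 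The paper avoids all of this by never leaving the level of pointwise meets and composition: by item \ref{item7} of Section \ref{pre}, precomposition with $\mu^{t(s)}$ distributes over meets, so $\neg(\mu^{s})\mu^{t(s)}\wedge\mu^{s}\mu^{t(s)}=(\neg(\mu^{s})\wedge\mu^{s})\mu^{t(s)}=\mu^{t(s)}$; your computation $\mu^{s}\mu^{t(s)}=Tp$ then yields $\neg(\mu^{s})\mu^{t(s)}=\mu^{t(s)}$, and since $\neg(\mu^{s})\leq\neg(\mu^{s})\mu^{t(s)}$ (as $\mu^{t(s)}$ is an inflator), the proposition follows with no idempotency assumption on $\mu^{s}$ and no joins or distributivity in $N(I(A))$ at all.
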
 

\begin{proof}
We know that $\neg(\mu^{s})\wedge \mu^{s}=id_{I(A)}$, from item \ref{item7} of Section \ref{pre}, $\neg(\mu^{s})\mu^{t(s)}\wedge \mu^{s}\mu^{t(s)}=\mu^{t(s)}$. Now, take any inflator $d$ on $A$ we have, $(\mu^{s}\mu^{t(s)})(d)=\mu^{s}(dt(s))=(dt(s))s=d(t(s)s)=d\bar{d}=\bar{d}$ so $\mu^{s}\mu^{t(s)}=Tp$ is the top in $N(I(A))$. From this we obtain that $\neg(\mu^{s})\mu^{t(s)}=\mu^{t(s)}$, that is, $\neg(\mu^{s})\leq \mu^{t(s)}$.
\end{proof}

Compare the above with Lemma 20 of \cite{15}.
\begin{cor}
\label{i11}
If $s$ is any stable inflator on an idiom $A$, then $\neg s\leq t(s)$ in $C(A)$.  In particular, $\neg s\leq\jmath(s^{\infty}) $ in $N(A)$, and for any nucleus $j$ on $A$ we have $\neg j\leq \jmath(j)$ in $N(A)$.
\end{cor}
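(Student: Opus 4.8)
The statement is a direct consequence of Proposition \ref{i10}, transported from the nucleus frame $N(I(A))$ down to $A$ through the embedding $\mu^{(\_)}$. The plan for the first inequality is to start from $\neg(\mu^{s})\leq\mu^{t(s)}$ in $N(I(A))$ and to reflect it along $\mu^{(\_)}$. Recall that $\mu^{(\_)}$ is an order embedding which also reflects order: evaluating at the bottom inflator gives $\mu^{k}(d_{\underline{0}})=d_{\underline{0}}k=k$, so $\mu^{a}\leq\mu^{b}$ forces $a\leq b$. Granting the identification $\neg(\mu^{s})=\mu^{\neg s}$ (equivalently, that evaluating the nucleus $\neg(\mu^{s})$ at $d_{\underline{0}}$ returns the negation $\neg s$ in $C(A)$), one reads off $\mu^{\neg s}=\neg(\mu^{s})\leq\mu^{t(s)}$, and order reflection yields $\neg s\leq t(s)$ in $C(A)$. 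Concretely, this is just the pointwise inequality $\neg(\mu^{s})(d)\leq d\,t(s)$ evaluated at $d=d_{\underline{0}}$.

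For the second assertion I would first record the comparison $t(s)\leq\jmath(s^{\infty})$. Here $t(s)=\bigwedge\{z\in I(A)\mid zs=\bar{d}\}$ and $\jmath(s^{\infty})=\bigwedge\{k\in N(A)\mid ks^{\infty}=\bar{d}\}$. If $k\in N(A)$ satisfies $ks^{\infty}=\bar{d}$, then from $s\leq s^{\infty}$ and item 5 of Section~\ref{pre} we get $ks\leq ks^{\infty}=\bar{d}$, hence $ks=\bar{d}$; thus $\{k\in N(A)\mid ks^{\infty}=\bar{d}\}\subseteq\{z\in I(A)\mid zs=\bar{d}\}$, and passing to infima reverses the inclusion to give $t(s)\leq\jmath(s^{\infty})$. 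Chaining with the first inequality yields $\neg s\leq t(s)\leq\jmath(s^{\infty})$, which is the second statement. The last assertion is then the special case $s=j$: a nucleus is idempotent, so $j^{\infty}=j$ and $\jmath(j^{\infty})=\jmath(j)$, whence $\neg j\leq\jmath(j)$ in $N(A)$.

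The real content, and the step I expect to be the main obstacle, is the single identification $\neg(\mu^{s})=\mu^{\neg s}$ needed in the first paragraph. The delicate point is that $\mu^{(\_)}$ does not preserve binary meets on the nose: item 6 of Section~\ref{pre} gives only $\mu^{a\wedge b}\leq\mu^{a}\wedge\mu^{b}$, so one cannot simply transport the pseudocomplement across the embedding, and indeed $\mu^{\neg s}\wedge\mu^{s}$ need only dominate the identity rather than equal it. I would instead establish the identification by the very mechanism used in the proof of Proposition~\ref{i10}: since $\mu^{(\_)}$ carries $C(A)$ into $N(I(A))$ (idempotents go to nuclei) and the product formulas of item 7 of Section~\ref{pre} force $\mu^{s}\mu^{t(s)}$ to be the top of $N(I(A))$, one pins down $\neg(\mu^{s})$ as the evaluation $\neg(\mu^{s})(d_{\underline{0}})=\neg s$ directly, rather than by a formal meet computation. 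Once this bridge is secured, every remaining step is the elementary bookkeeping above.
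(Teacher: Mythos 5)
Your first paragraph follows the paper's own route for the inequality $\neg s\leq t(s)$ (evaluate Proposition \ref{i10} at $d_{\underline{0}}$), but it defers the entire content to the identification $\neg(\mu^{s})(d_{\underline{0}})=\neg s$, which you only gesture at in your closing paragraph. The paper actually carries this out: writing $\neg(\mu^{s})=\bigvee\mathcal{P}$ with $\mathcal{P}=\{\varrho\mid \varrho\wedge\mu^{s}=id_{I(A)}\}$, evaluation at $d_{\underline{0}}$ commutes with the pointwise join; each $\varrho\in\mathcal{P}$ satisfies $\varrho(d_{\underline{0}})\wedge s=d_{\underline{0}}$, hence $\varrho(d_{\underline{0}})\leq\neg s$, and $\mu^{\neg s}\in\mathcal{P}$ gives the reverse bound, so $\neg(\mu^{s})(d_{\underline{0}})=\neg s\leq t(s)$. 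Your plan to ``pin down the evaluation directly'' is not yet an argument; as written, this part is incomplete rather than wrong.

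The genuine error is in your second paragraph. The stepping-stone inequality $t(s)\leq\jmath(s^{\infty})$ is false in general, and the argument you give for it is a non-sequitur: from $ks\leq ks^{\infty}=\bar{d}$ you conclude $ks=\bar{d}$, but $ks\leq\bar{d}$ holds for every inflator and implies nothing. Monotonicity gives exactly the opposite implication ($ks=\bar{d}\Rightarrow ks^{\infty}=\bar{d}$), i.e.\ $\mathcal{I}_{t}(s)\subseteq\mathcal{I}_{t}(s^{\infty})$, so the inclusion you need runs the wrong way; it yields $t(s^{\infty})\leq t(s)$, not a bound on $t(s)$. Concretely, on the three-element chain $\underline{0}<c<\bar{1}$ take the stable inflator $s$ with $s(\underline{0})=c$ and $s(c)=s(\bar{1})=\bar{1}$. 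Then $s^{\infty}=\bar{d}$, so every nucleus $k$ satisfies $ks^{\infty}=\bar{d}$ and $\jmath(s^{\infty})=d_{\underline{0}}$; but $t(s)=O_{s(\underline{0})}=O_{c}\neq d_{\underline{0}}$, so $t(s)\not\leq\jmath(s^{\infty})$. The paper avoids this by applying the first inequality to $s^{\infty}$ (which is a stable inflator, indeed a nucleus, by item 9 of Section \ref{pre}) rather than to $s$: since $\neg s^{\infty}=\neg s$, one gets $\neg s=\neg s^{\infty}\leq t(s^{\infty})\leq\mathfrak{t}(s^{\infty})\leq\jmath(s^{\infty})$, where the last two comparisons do come from set inclusions ($\mathcal{N}_{t}(s^{\infty})\subseteq\mathcal{P}_{t}(s^{\infty})\subseteq\mathcal{I}_{t}(s^{\infty})$, and infima over smaller sets are larger). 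Your nucleus case inherits the same gap and is repaired the same way, as in the paper: $\neg j\leq\mathfrak{t}(j)\leq\jmath(j)$.
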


\begin{proof} 
From Proposition \ref{i10} we have that $\neg(\mu^{s})\leq \mu^{t(s)}$.  Evaluating in $d_{\underline{0}}$ we obtain that $\neg(\mu^{s})(d_{\underline{0}})\leq \mu^{t(s)}(d_{\underline{0}})$ and so $\neg(\mu^{s})(d_{\underline{0}})\leq t(s)$. 
Now, for the element 
\[\neg(\mu^{s})=\bigvee\{\varrho\in S(I(A))\mid \varrho \wedge \mu^{s}=id_{I(A)}\}=\bigvee\mathcal{P}\] 
we have that $\big(\bigvee\mathcal{P}\big)(d_{\underline{0}})=\bigvee\{\varrho(d_{\underline{0}})\mid \varrho\in \mathcal{P}\}$, that is, for any $\varrho\in\mathcal{P}$ we have that $\varrho(d_{\underline{0}})\wedge \mu^{s}(d_{\underline{0}})=d_{\underline{0}}$.  But since $\mu^{s}(d_{\underline{0}})=s$, then $\bigvee\mathcal{P}(d_{\underline{0}})\leq \neg s$, and since $\mu^{\neg s}\in \mathcal{P}$, then $\neg s=\bigvee\mathcal{P}(d_{\underline{0}})$, and therefore $\neg s\leq t(s)$. 
 The last affirmation follows from the fact that $\neg s^{\infty}=\neg s$, then $\neg s^{\infty}\leq t(s^{\infty})\leq \mathfrak{t}(s^{\infty})\leq \jmath(s^{\infty})$ and for a nucleus $\neg j\leq \mathfrak{t}(j)\leq \jmath{j}$.   
\end{proof}

\begin{prop}
\label{i12}
If $s$ is an inflator on an idiom $A$ and if $\neg(\mu^{s})= \mu^{t(s)}$, then $s$ is idempotent. In particular, if $s$ is a stable inflator, then it is a nucleus.
\end{prop}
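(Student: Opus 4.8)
The plan is to turn the hypothesis $\neg(\mu^{s})=\mu^{t(s)}$ into a single pointwise identity between inflators on $A$, and then extract idempotency by evaluating that identity at one cleverly chosen inflator, namely the bottom $d_{\underline{0}}$ of $I(A)$.

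First I would recall, as used in the proof of Proposition~\ref{i10}, that the negation obeys the defining pseudocomplement identity $\neg(\mu^{s})\wedge\mu^{s}=id_{I(A)}$ in the frame $N(I(A))$, where $id_{I(A)}$ is the bottom element. Substituting the hypothesis gives $\mu^{t(s)}\wedge\mu^{s}=id_{I(A)}$. Since meets of nuclei are computed pointwise, I would evaluate both sides at $d_{\underline{0}}$. Using $\mu^{k}(d)=dk$ and $d_{\underline{0}}k=k$, this yields $\mu^{t(s)}(d_{\underline{0}})\wedge\mu^{s}(d_{\underline{0}})=t(s)\wedge s=d_{\underline{0}}$. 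Thus $t(s)\wedge s=d_{\underline{0}}$ as inflators on $A$, i.e. $t(s)(a)\wedge s(a)=a$ for every $a\in A$.

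Next I would invoke Theorem~\ref{i3}, which identifies $t(s)=O_{s(\underline{0})}$. For any $a\geq s(\underline{0})$ we have $O_{s(\underline{0})}(a)=\bar{1}$, so the identity above collapses to $\bar{1}\wedge s(a)=s(a)=a$. Hence $s$ fixes every element lying above $s(\underline{0})$. Now for an arbitrary $a\in A$, monotonicity of $s$ together with $\underline{0}\leq a$ gives $s(a)\geq s(\underline{0})$; applying the fact just proved to the element $s(a)$ yields $s(s(a))=s(a)$. Therefore $s^{2}=s$, so $s$ is idempotent. For the final sentence, if $s$ is in addition stable, then $s^{2}=s$ forces $s^{\alpha}=s$ for all $\alpha\geq 1$ and so $s^{\infty}=s$; by item~9 of Section~\ref{pre} the inflator $s^{\infty}$ is a nucleus, whence $s$ itself is a nucleus.

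The only genuinely delicate points are bookkeeping rather than mathematical depth: one must keep track of left- versus right-composition in $\mu^{k}(d)=dk$, be sure the pseudocomplement identity is being read inside $N(I(A))$ with $id_{I(A)}$ as its bottom, and recognize that evaluating at $d_{\underline{0}}$ is exactly the move that exposes $t(s)\wedge s$. Once the evaluation at $d_{\underline{0}}$ is in place and $t(s)=O_{s(\underline{0})}$ is substituted, the derivation of $s^{2}=s$ is essentially immediate, so I expect no serious obstacle beyond confirming these conventions.
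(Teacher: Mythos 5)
Your proof is correct, and its first half coincides exactly with the paper's: both invoke the pseudocomplement identity $\neg(\mu^{s})\wedge\mu^{s}=id_{I(A)}$ from the proof of Proposition~\ref{i10}, substitute the hypothesis, and evaluate at $d_{\underline{0}}$ to land on $t(s)\wedge s=d_{\underline{0}}$. Where you diverge is in extracting idempotency from this identity. The paper stays purely equational: it composes the identity on the right with $s$, distributes the meet over composition (item 7 of Section~\ref{pre}) to get $t(s)s\wedge s^{2}=s$, and then kills the first term using the defining property $t(s)s=\bar{d}$ of the totalizer. You instead appeal to Theorem~\ref{i3} to replace $t(s)$ by the concrete inflator $O_{s(\underline{0})}$, deduce that $s$ fixes every element of the interval $[s(\underline{0}),\bar{1}]$, and then apply this to $s(a)$. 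Both finishes are one or two lines; the paper's is more self-contained (it needs only the defining equation $t(s)s=\bar{d}$, not the classification of totalizers), while yours yields the slightly stronger intermediate fact that $s$ is the identity above $s(\underline{0})$, which makes the mechanism behind the idempotency more transparent. For the final claim the paper just says it is direct; your explicit argument --- $s^{2}=s$ forces $s^{\alpha}=s$ for $\alpha\geq 1$, hence $s^{\infty}=s$, which is a nucleus by item 9 of Section~\ref{pre} --- is a valid way to fill that in.
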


\begin{proof}
Since $\neg(\mu^{s})\wedge \mu^{s}=\mu^{t(s)}\wedge \mu^{s}=id_{I(A)}$, evaluating in $d_{\underline{0}}$ we find that $t(s)\wedge s=d_{\underline{0}}$ and thus $t(s)s\wedge s^{2}=s$. But since $t(s)s=\bar{d}$, then $s^{2}=s$. The last statement is direct.
\end{proof}

\begin{prop}

\label{i13}
Let $A$ be an idiom. Then, for any stable inflator $s\in S(A)$ the following are equivalent:
\begin{itemize}
\item[{\rm (1)}]  $\neg s=\mathfrak{t}(s).$
\item[{\rm (2)}]  $s\leq \neg \mathfrak{t}(s).$
\end{itemize}
\end{prop}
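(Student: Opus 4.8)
The plan is to reduce both implications to the single defining adjunction of the pseudocomplement: in the frame in which $\neg$ is computed one has
\[
x \le \neg y \iff x \wedge y = d_{\underline{0}} \iff y \le \neg x
\]
for all relevant $x,y$, so that $\neg$ is symmetric in the sense that $x \le \neg y$ holds exactly when $y \le \neg x$. Alongside this I will use the one nontrivial comparison already available, namely $\neg s \le \mathfrak{t}(s)$, which follows from Corollary \ref{i11} (giving $\neg s \le t(s)$) together with the chain $t(s) \le \mathfrak{t}(s)$ recorded before Proposition \ref{i9}. Since both $s$ and $\mathfrak{t}(s)$ lie in $S(A)$, the term $\neg \mathfrak{t}(s)$ is formed in exactly the same way as $\neg s$, and the adjunction applies to the pair $(s,\mathfrak{t}(s))$.

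For $(1)\Rightarrow(2)$ I would argue as follows. Assuming $\neg s = \mathfrak{t}(s)$ we have trivially $\mathfrak{t}(s) \le \neg s$; applying the symmetry of the pseudocomplement with $x=\mathfrak{t}(s)$, $y=s$ turns $\mathfrak{t}(s)\le\neg s$ into $s \le \neg \mathfrak{t}(s)$, which is precisely $(2)$. For $(2)\Rightarrow(1)$, the hypothesis $s \le \neg \mathfrak{t}(s)$ is, by the same symmetry (now with $x=s$, $y=\mathfrak{t}(s)$), equivalent to $\mathfrak{t}(s) \le \neg s$; combining this with $\neg s \le \mathfrak{t}(s)$ yields $\neg s = \mathfrak{t}(s)$, that is, $(1)$. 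Thus the logical content of the equivalence is only the symmetry of $\neg$ together with the one-sided comparison $\neg s \le \mathfrak{t}(s)$.

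The main obstacle I anticipate is not this logical skeleton but making the pseudocomplement rigorous for a \emph{merely stable} (not necessarily idempotent) inflator $s$, because $S(A)$ is only an idiom and need not itself be a frame, so the absorption and symmetry laws of $\neg$ cannot be quoted inside $S(A)$ directly. The clean remedy is to transport the computation through the embedding $\mu^{(\_)}$ into the frame $N(I(A))$, exactly as in Proposition \ref{i10} and Corollary \ref{i11}: there $\neg(\mu^s)$ and $\neg(\mu^{\mathfrak{t}(s)})$ genuinely are pseudocomplements in a frame, so the adjunction and its symmetry hold on the nose. I would therefore carry out both implications at the level of $\mu^s$ and $\mu^{\mathfrak{t}(s)}$ in $N(I(A))$, deriving $\mu^{\mathfrak{t}(s)} \le \neg(\mu^s) \iff \mu^s \le \neg(\mu^{\mathfrak{t}(s)})$, and only descend to $A$ at the end by evaluating at $d_{\underline{0}}$, which (as in the proof of Corollary \ref{i11}) recovers $\neg s$, $\neg \mathfrak{t}(s)$ and the desired comparisons. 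The delicate point to check is precisely that this evaluation at $d_{\underline{0}}$ preserves the relevant (in)equalities.
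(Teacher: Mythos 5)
Your proposal is correct and is essentially the paper's own proof: both directions come down to the pseudocomplement adjunction ($x\leq \neg y \Leftrightarrow x\wedge y = d_{\underline{0}} \Leftrightarrow y\leq \neg x$) together with the one-sided comparison $\neg s\leq t(s)\leq \mathfrak{t}(s)$ supplied by Corollary \ref{i11}. The only differences are cosmetic: the paper gets $(1)\Rightarrow(2)$ by applying $\neg$ and using $s\leq\neg\neg s$ instead of your direct symmetry step, and it manipulates $\neg$ formally without your (admittedly more careful) detour through $\mu^{(\_)}$ and $N(I(A))$.
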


\begin{proof}
If $\neg s=\mathfrak{t}(s)$, then $\neg \neg s=\neg \mathfrak{t}(s)$, and since $s\leq\neg\neg s$, then  $s\leq \neg \mathfrak{t}(s)$. 
Now, if $s\leq \neg \mathfrak{t}(s)$ implies that $s\wedge \mathfrak{t}(s)=d_{\underline{0}}$, then $\mathfrak{t}(s)\leq \neg s$, and the other comparison is just the Corollary \ref{i11}.
\end{proof}

\begin{cor}
\label{i14}
Let $A$ be an idiom. Then, the following are equivalent:
\begin{itemize}
\item[{\rm (1)}]  $\neg s=t(s)$ for all $s\in S(A)$.
\item[{\rm (2)}] $S(A)$ is a boolean algebra.
\end{itemize}
\end{cor}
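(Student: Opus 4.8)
The plan is to work entirely inside the idiom $S(A)$, where $\neg s$ denotes the pseudocomplement furnished by the embedding $\mu^{(\_)}$ into the frame of (pre)nuclei, and to exploit the inequalities already available: $\neg s\le t(s)$ from Corollary \ref{i11} together with the chain $\neg s\le t(s)\le\mathfrak{t}(s)$, and $\mathfrak{t}(s)\cdot s=\bar{d}$ from the definition of the partial totalizer. The first thing I would record is that condition (1) is equivalent to $\neg s=\mathfrak{t}(s)$ for every $s$: if $\neg s=t(s)$ then $\neg s\cdot s=\bar{d}$, so $\neg s\in\mathcal{S}_{t}(s)$ and hence $\mathfrak{t}(s)\le\neg s$, which collapses the chain; conversely $\neg s=\mathfrak{t}(s)$ squeezes $t(s)$ between two equal bounds. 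By Proposition \ref{i13} this is the same as requiring $s\le\neg\mathfrak{t}(s)$ for all $s$, so the corollary is precisely the uniform version of that proposition.

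For $(2)\Rightarrow(1)$ I would argue as follows. The key observation is that the composite $\neg s\circ s$ of two stable inflators is again stable (a direct check from the two stability inequalities), and that any product dominates its factors, so $\neg s\cdot s\ge s$ and $\neg s\cdot s\ge\neg s$ in $S(A)$; hence $\neg s\cdot s\ge s\vee\neg s$. If $S(A)$ is Boolean then $\neg s$ is the complement of $s$, so $s\vee\neg s=\bar{d}$ and therefore $\neg s\cdot s=\bar{d}$. This places $\neg s$ in $\mathcal{S}_{t}(s)$, giving $\mathfrak{t}(s)\le\neg s$, and combined with $\neg s\le t(s)\le\mathfrak{t}(s)$ we conclude $\neg s=t(s)$. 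This direction is clean and uses modularity only through the Boolean hypothesis.

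For $(1)\Rightarrow(2)$ I would first upgrade (1) to idempotency of every stable inflator. Using the identification $\mu^{\neg s}\in\mathcal{P}$ from the proof of Corollary \ref{i11} and Proposition \ref{i10} (which gives $\neg(\mu^{s})\le\mu^{t(s)}$), the equality $t(s)=\neg s$ forces $\neg(\mu^{s})=\mu^{t(s)}$, whence Proposition \ref{i12} yields that $s$ is idempotent. Once every stable inflator is idempotent, I would compute the join in $S(A)$ through the closure construction: for idempotent stable $s,\neg s$ the join $s\vee\neg s$ equals $(\neg s\cdot s)^{\infty}$ (the $\infty$-closure of the pointwise supremum, which is stable by item (9) of Section \ref{pre}), and since $\neg s\cdot s=\bar{d}$ is already idempotent this gives $s\vee\neg s=\bar{d}$. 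Together with $s\wedge\neg s=d_{\underline{0}}$ (the pseudocomplement identity, meets being pointwise in $S(A)$), every element of $S(A)$ is complemented. Finally I would show complements are unique by the modular law — any complement $x$ of $s$ satisfies $x\le\neg s$, and then $\neg s=\neg s\wedge(s\vee x)=x\vee(\neg s\wedge s)=x$ — and invoke the classical fact that a uniquely complemented modular lattice is a Boolean algebra.

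The hard part is the passage $(1)\Rightarrow(2)$, and within it the step producing genuine complementation, $s\vee\neg s=\bar{d}$. The difficulty is asymmetric: the totalizer identity gives only $s\vee\neg s\le\neg s\cdot s=\bar{d}$, which is vacuous, so the reverse inequality cannot be read off directly and must be extracted from idempotency (this is why reducing to idempotent stable inflators via Proposition \ref{i12} is essential, since it lets the composite $\neg s\cdot s$ itself compute the join rather than merely bound it). I expect the two delicate points to be verifying that the join in $S(A)$ is computed symmetrically as $(\neg s\cdot s)^{\infty}=(s\cdot\neg s)^{\infty}$, and citing correctly the lattice-theoretic theorem that modularity plus unique complementation yields distributivity; everything else is bookkeeping with the inequalities $\neg s\le t(s)\le\mathfrak{t}(s)$.
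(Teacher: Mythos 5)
Your proposal is correct in substance and, for most of its length, retraces the paper's own argument. The direction $(2)\Rightarrow(1)$ is exactly the paper's: a complement of $s$ satisfies $(\neg s)s\geq \neg s\vee s=\bar{d}$, so $\neg s$ totalizes $s$ and $t(s)\leq\neg s$, while Corollary \ref{i11} supplies the reverse inequality (your detour through $\mathfrak{t}(s)$ changes nothing). For $(1)\Rightarrow(2)$, both you and the paper first upgrade $(1)$ to idempotency of every stable inflator via Proposition \ref{i12}, and then observe that once everything in sight is idempotent the product computes the join, so $(\neg s)s=\bar{d}$ becomes $s\vee\neg s=\bar{d}$; in fact your justification of the hypothesis of Proposition \ref{i12} (combining $\mu^{\neg s}\in\mathcal{P}$ from the proof of Corollary \ref{i11} with Proposition \ref{i10} to squeeze $\neg(\mu^{s})=\mu^{t(s)}$) spells out a step the paper leaves implicit, which is an improvement in care. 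The one place where you genuinely diverge is the final step, distributivity. The paper gets it for free: idempotency of every stable inflator gives $S(A)=P(A)=N(A)$, and $N(A)$ is a frame by Theorem \ref{014}, so the complemented lattice $S(A)$ is automatically Boolean. You instead prove uniqueness of complements by the modular law and invoke the classical Bergmann/Birkhoff--von Neumann theorem that a uniquely complemented modular lattice is Boolean; that theorem is true and your modular computation is fine, so your route works, but it imports an external result where the paper's own Theorem \ref{014} suffices. A small simplification: your worry about computing the join symmetrically as $(\neg s\cdot s)^{\infty}=(s\cdot\neg s)^{\infty}$ is unnecessary, since once every stable inflator is idempotent any upper bound $w$ of $\{s,\neg s\}$ in $S(A)$ satisfies $w=w^{2}\geq(\neg s)s=\bar{d}$, so the join is $\bar{d}$ with no closure computation at all.
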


\begin{proof}

$(1)\Rightarrow (2)$:  From Proposition \ref{i12} we obtain that $S(A)=P(A)=N(A)$.  Now, since the product of any two  pre-nuclei is a pre-nucleus then $ss'$ is idempotent, in particular $s\vee s'=ss'$. Hence, from $(1)$ we have that $(\neg s)s=\bar{d}=\neg s\vee s$, that is, $\neg s=t(s)$ is the complement of $s$, and thus $S(A)$ is boolean.

$(2)\Rightarrow (1)$: Every $s$ has a complement $\neg s$, and this implies that $(\neg s)s=\bar{d}$, that is, $t(s)\leq\neg s$. 
\end{proof}

\begin{dfn}
Let $d$ be an inflator on $A$. We say the idiom $A$ has $d$-\emph{length} if $d^{\infty}(\underline{0})=\bar{1}$.
\end{dfn}

\begin{obs}
Note also that if $d'$ is any other inflator with $d\leq d'$, then $d^{\infty}\leq d'^{\infty}$. Hence, if $d^{\infty}=\bar{d}$, then $d'^{\infty}=\bar{d}$, that is, $A$ has $d'$-length. In terms of totalizers this observation can be rephrased as follows: An idiom $A$ has $d$-length if $t(d^{\infty})=d_{\underline{0}}$. 
\end{obs}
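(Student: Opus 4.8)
The remark bundles three assertions, and the plan is to treat them in sequence: the monotonicity $d\leq d'\Rightarrow d^{\infty}\leq d'^{\infty}$, the consequent preservation of the top $\bar{d}$, and finally the reformulation of $d$-length through the totalizer. The first assertion carries the weight, and I would prove the stronger fact that $d^{\alpha}\leq d'^{\alpha}$ for \emph{every} ordinal $\alpha$ by transfinite induction on the recursive definition of the iterates. The base case $\alpha=0$ is trivial, since $d^{0}=d_{\underline{0}}=d'^{0}$. For the successor step, assuming $d^{\alpha}\leq d'^{\alpha}$, I would chain two instances of the monotonicity of the product (item 5 of Section \ref{pre}): from $d\leq d'$ one gets $dd^{\alpha}\leq d'd^{\alpha}$, and from the inductive hypothesis one gets $d'd^{\alpha}\leq d'd'^{\alpha}$, whence $d^{\alpha+1}\leq d'^{\alpha+1}$. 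For a limit ordinal $\lambda$ the iterates are suprema, so $d^{\lambda}=\bigvee_{\alpha<\lambda}d^{\alpha}\leq\bigvee_{\alpha<\lambda}d'^{\alpha}=d'^{\lambda}$, using that each $d^{\alpha}\leq d'^{\alpha}\leq d'^{\lambda}$. Since the comparison holds at every ordinal, evaluating at one ordinal $\beta$ large enough to stabilize both chains gives $d^{\infty}=d^{\beta}\leq d'^{\beta}=d'^{\infty}$. The second assertion is then immediate: $d'^{\infty}\leq\bar{d}$ always, as $\bar{d}$ is the top of $I(A)$, so $\bar{d}=d^{\infty}\leq d'^{\infty}$ forces $d'^{\infty}=\bar{d}$, i.e. $A$ has $d'$-length.

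For the totalizer reformulation I would apply Theorem \ref{i3} to the inflator $d^{\infty}$, obtaining $t(d^{\infty})=O_{d^{\infty}(\underline{0})}$, and then reduce to the elementary equivalence $O_{b}=d_{\underline{0}}\Leftrightarrow b=\bar{1}$. Indeed, if $b<\bar{1}$ then $O_{b}(b)=\bar{1}\neq b$ by the definition of $O_{b}$, so $O_{b}$ is not the identity; while if $b=\bar{1}$ then $a\geq\bar{1}$ holds only for $a=\bar{1}$, so $O_{\bar{1}}(a)=a$ for every $a$ and hence $O_{\bar{1}}=d_{\underline{0}}$. Taking $b=d^{\infty}(\underline{0})$, this shows $t(d^{\infty})=d_{\underline{0}}$ exactly when $d^{\infty}(\underline{0})=\bar{1}$, which is precisely the defining condition for $A$ to have $d$-length.

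I do not expect a genuine obstacle: the content is an unwinding of the definitions of the iterates, of $O_{b}$, and of $d$-length, together with Theorem \ref{i3}. The only point that needs care is the bookkeeping in the transfinite induction—correctly composing the two monotonicity inequalities in the successor step (so that one multiplies on the right by $d^{\alpha}$ first and then on the left by $d'$), and observing that the stabilization ordinals of the chains for $d$ and $d'$ may differ, which is harmless once one passes to their maximum.
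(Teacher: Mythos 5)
Your proof is correct and follows essentially the route the paper intends: the remark is stated without proof, but the monotonicity $d\leq d'\Rightarrow d^{\infty}\leq d'^{\infty}$ that you establish by transfinite induction is exactly the monotonicity of the operator $(\_)^{\infty}$ already asserted in Section \ref{pre}, and the totalizer rephrasing is, as you show, an application of Theorem \ref{i3} together with the elementary observation that $O_{b}=d_{\underline{0}}$ precisely when $b=\bar{1}$. The two bookkeeping points you flag (the order in which the two monotonicity inequalities are composed in the successor step, and passing to a common stabilization ordinal for the two chains) are handled correctly, so nothing is missing.
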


There is other aspect of this theory: Consider any stable inflator $St$ on $N(A)$. Following Simmons (Definition 5.4 of \cite{28}) we said that a nucleus $j$ on $A$ has $St$-\emph{dimension} if $St^{\infty}(j)=\bar{d}$. If $d_{\underline{0}}$ has $St$-dimension, we say that $A$ has  $St$-dimension. Since $d^{A_{j}}_{\underline{0}}=j$, then a nucleus $j$ has $St$-dimension if and only if $A_{j}$ has $St$-dimension.

\begin{prop}
\label{i15}
Let $A$ be an idiom and $St$ any stable inflator on $N(A)$. Then, 
\[\widehat{\mu^{St^{\infty}(d_{\underline{0}})}}\leq St^{\infty}\] 
where $\widehat{\mu^{St^{\infty}(d_{\underline{0}})}}$ is the idempotent closure of the composition $((\_)^\infty\circ\mu^{St^{\infty}(d_{\underline{0}})})$.
\end{prop}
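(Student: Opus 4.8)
The plan is to evaluate the operator $\widehat{\mu^{St^{\infty}(d_{\underline{0}})}}$ on nuclei, recognise it there as the elementary join operator on the frame $N(A)$, and then read off the inequality directly from the fact that $St^{\infty}$ is an idempotent, monotone, inflationary map. Write $k:=St^{\infty}(d_{\underline{0}})$. Since $St$ is stable, item 9 of Section \ref{pre} tells us that $St^{\infty}$ is a nucleus on $N(A)$, so in particular $k\in N(A)$; recall also that $N(A)$ is a frame by Theorem \ref{014}. As $k$ is idempotent, $\mu^{k}$ is a nucleus on $I(A)$.

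First I would unwind $(\_)^{\infty}\circ\mu^{k}$ on a nucleus $j\in N(A)$: by definition it produces $(jk)^{\infty}$. The crux is the identity $(jk)^{\infty}=j\vee k$, where the join is taken in $N(A)$. Indeed $jk$ is a composite of pre-nuclei, hence itself a pre-nucleus, so $(jk)^{\infty}$ is a nucleus by item 8 of Section \ref{pre}; it lies above $j$ and $k$ because $j\vee k\leq jk$, and it is the least such nucleus, since any nucleus $n$ with $j,k\leq n$ satisfies $jk\leq n$ and therefore $(jk)^{\infty}\leq n$ (this is just the description of suprema of nuclei recalled in Section \ref{pre}). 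Thus on $N(A)$ the map $j\mapsto (jk)^{\infty}$ coincides with $u_{k}$, where $u_{k}(j)=k\vee j$ as in the proof of Theorem \ref{i6}, and $u_{k}$ is a nucleus on the frame $N(A)$. Being already idempotent there, the outer closure $\widehat{(\cdot)}$ contributes nothing on $N(A)$, so $\widehat{\mu^{k}}$ restricts on $N(A)$ to $u_{k}$.

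It then remains to compare $u_{k}$ with $St^{\infty}$ as inflators on $N(A)$. For any $j\in N(A)$, monotonicity of $St^{\infty}$ together with $d_{\underline{0}}\leq j$ gives $k=St^{\infty}(d_{\underline{0}})\leq St^{\infty}(j)$, while inflationarity gives $j\leq St^{\infty}(j)$; hence $u_{k}(j)=k\vee j\leq St^{\infty}(j)$, which is the asserted $\widehat{\mu^{St^{\infty}(d_{\underline{0}})}}\leq St^{\infty}$.

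The one genuinely delicate point is the identification in the middle step: $\mu^{k}$ and the $(\_)^{\infty}$ construction are defined on $I(A)$, whereas $St^{\infty}$ lives on $N(A)$, so what must be checked is precisely that feeding the $I(A)$-level construction a nucleus returns a nucleus and collapses to the join operator $u_{k}$. Once the identity $(jk)^{\infty}=j\vee k$ in $N(A)$ is secured, both the vanishing of the outer closure and the final inequality are immediate, and everything else is routine monotonicity.
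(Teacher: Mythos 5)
Your proof is correct, but it reaches the inequality by a different route than the paper, so a comparison is worth recording. The paper's own proof never computes $\widehat{\mu^{k}}$ (writing $k=St^{\infty}(d_{\underline{0}})$); it is three lines of composition algebra: from $j\leq St^{\infty}(j)$ and idempotency of the nucleus $St^{\infty}(j)$ one gets $jSt^{\infty}(j)=St^{\infty}(j)$, monotonicity then gives $\mu^{k}(j)=jk\leq jSt^{\infty}(j)=St^{\infty}(j)$, i.e.\ $\mu^{k}\leq St^{\infty}$ pointwise, and the conclusion follows because the closure constructions are monotone and fix the already idempotent $St^{\infty}$. You instead identify the left-hand side exactly: your key lemma --- that for nuclei $j,k$ the composite $jk$ is a pre-nucleus whose closure $(jk)^{\infty}$ is the least nucleus above both, hence equals $j\vee k$ in the frame $N(A)$ --- shows that $(\_)^{\infty}\circ\mu^{k}$ restricted to $N(A)$ is the join operator $u_{k}$ of Theorem \ref{i6}, which is already idempotent, so $\widehat{\mu^{k}}=u_{k}$; the final comparison $u_{k}\leq St^{\infty}$ then uses exactly the same three ingredients as the paper (monotonicity at $d_{\underline{0}}$, inflationarity, and $St^{\infty}(j)\in N(A)$). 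Your route is longer but proves more: not merely $\widehat{\mu^{St^{\infty}(d_{\underline{0}})}}\leq St^{\infty}$ but the equality $\widehat{\mu^{St^{\infty}(d_{\underline{0}})}}=u_{St^{\infty}(d_{\underline{0}})}$ on $N(A)$, which ties Proposition \ref{i15} to the operators $u_{(\_)}$ appearing in Theorem \ref{i6} and Proposition \ref{i81} and makes the remark following the proposition (on $\widehat{\mu^{St^{\infty}(d_{\underline{0}})}}$-dimension) transparent. Two minor points: your appeal to stability of $St$ and item 9 to conclude $k\in N(A)$ is superfluous, since any inflator on $N(A)$ takes values in $N(A)$ (indeed neither proof uses stability of $St$ in an essential way); and the parenthetical citation of the paper's description of suprema of nuclei is not quite what you use --- what you actually use, and correctly verify, is that $f\leq n$ with $n$ an idempotent inflator forces $f^{\infty}\leq n$ by transfinite induction.
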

\begin{proof}
We have $St^{\infty}(d_{\underline{0}})\leq St^{\infty}(j)$ for all nuclei $j$ and $j\leq St^{\infty}(j)$. This last inequality implies that $jSt^{\infty}(j)=St^{\infty}(j)$ because $St^{\infty}(j)$ is in particular idempotent. Then, the first comparison implies that $jSt^{\infty}(d_{\underline{0}})\leq St^{\infty}(j)$, that is, $\mu^{St^{\infty}(d_{\underline{0}})}\leq St^{\infty}$, and hence $\widehat{\mu^{St^{\infty}(d_{\underline{0}})}}\leq St^{\infty}$.
\end{proof}

\begin{obs}
Proposition \ref{i15} implies that every nucleus $j$ on $A$, with $\widehat{\mu^{St^{\infty}(d_{\underline{0}})}}$-dimension has $St^{\infty}$-dimension. 

Now, recall that for every stable inflator $s$ on $A$ we can consider the pre-nucleus $\mu^{s^{\infty}}$  on $S(A)$. Note also that if $k$ is a pre-nucleus on $A$ such that $\mu^{s^{\infty}}(k)=\bar{d}$, then $\mathfrak{t}(s^{\infty})\leq k$. Then $k$ has $\mu^{s^{\infty}}$-dimension. From $\mu^{s^{\infty}}\leq (\_)^\infty\circ\mu^{s^{\infty}}\leq \widehat{\mu^{s^{\infty}}}$ we see that every $\mathfrak{t}(s^{\infty})\leq k$ has $\widehat{\mu^{s^{\infty}}}$-dimension.
\end{obs}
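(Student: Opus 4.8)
The plan is to reduce the claim to a single pointwise inequality in $N(A)$ and then promote it to the idempotent closure. First I would record the facts I will lean on. Since $St$ is a stable inflator on the idiom $N(A)$, item 9 of Section \ref{pre} guarantees that $St^\infty$ is a nucleus on $N(A)$; in particular $St^\infty$ is monotone and idempotent as an operator on $N(A)$, and each value $St^\infty(j)$ is a nucleus on $A$, hence an idempotent inflator on $A$. I abbreviate $k:=St^\infty(d_{\underline{0}})$, the value of $St^\infty$ at the bottom element $d_{\underline{0}}$ of $N(A)$.

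The heart of the argument is the inequality $\mu^{k}(j)=jk\leq St^\infty(j)$ for every nucleus $j$, which I would assemble from three elementary observations. Since $d_{\underline{0}}\leq j$, monotonicity of $St^\infty$ gives $k=St^\infty(d_{\underline{0}})\leq St^\infty(j)$; since $St^\infty$ is inflatory, $j\leq St^\infty(j)$; and, writing $m=St^\infty(j)$, the inflator property of $j$ yields $m\leq jm$ while $j\leq m$ together with idempotency of $m$ yields $jm\leq mm=m$, so $jm=m$. Combining these with monotonicity of the product (item 5 of Section \ref{pre}) gives $jk\leq j\cdot St^\infty(j)=St^\infty(j)$, i.e. the pointwise inequality $\mu^{k}\leq St^\infty$ read among inflators on $A$.

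With this inequality in hand, I would pass to the idempotent closure. Writing $F=(\_)^\infty\circ\mu^{k}$ for the operator on $N(A)$ whose idempotent closure is $\widehat{\mu^{k}}$, I apply the monotone construction $(\_)^\infty$ pointwise: since each $St^\infty(j)$ is already idempotent, $(St^\infty(j))^\infty=St^\infty(j)$, so from $jk\leq St^\infty(j)$ one gets $F(j)=(jk)^\infty\leq St^\infty(j)$, that is, $F\leq St^\infty$ on $N(A)$. Finally, because $St^\infty$ is itself an idempotent operator on $N(A)$, monotonicity of the idempotent-closure construction yields $\widehat{\mu^{k}}=F^\infty\leq (St^\infty)^\infty=St^\infty$, as claimed.

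I expect the only genuinely delicate point to be the bookkeeping of the two levels at play. The symbol $\mu^{k}$ produces compositions $jk$ of nuclei, and such a composition is in general only a pre-nucleus, not a nucleus, so the inequalities must be read among inflators on $A$ before the $\infty$-closure is applied to land back inside $N(A)$. The one computation that is not pure monotonicity is the collapse $j\cdot St^\infty(j)=St^\infty(j)$, which is the standard fact that a closure operator lying above a given inflator absorbs it under left composition; once this is isolated, everything else is monotonicity of products (item 5) and of the $(\_)^\infty$ construction, together with the idempotency of $St^\infty$.
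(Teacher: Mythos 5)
Your computation is correct as far as it goes, but it proves the wrong statement. What you establish --- $\mu^{St^{\infty}(d_{\underline{0}})}\leq St^{\infty}$ and hence $\widehat{\mu^{St^{\infty}(d_{\underline{0}})}}\leq St^{\infty}$ --- is precisely Proposition \ref{i15}, which the paper has already proved (by the same three observations you use: $St^{\infty}(d_{\underline{0}})\leq St^{\infty}(j)$, $j\leq St^{\infty}(j)$, and $jSt^{\infty}(j)=St^{\infty}(j)$ by idempotency) and which the Remark explicitly invokes as its starting point. The Remark itself consists of dimension statements, and none of them appears in your write-up; the word ``dimension'' never occurs in your argument. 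Even the Remark's first sentence needs the extra (one-line) step you omit: if $j$ has $\widehat{\mu^{St^{\infty}(d_{\underline{0}})}}$-dimension then, this operator being idempotent, $\widehat{\mu^{St^{\infty}(d_{\underline{0}})}}(j)=\bar{d}$, so Proposition \ref{i15} gives $\bar{d}\leq St^{\infty}(j)$, i.e. $St^{\infty}(j)=\bar{d}$, which (again by idempotency of $St^{\infty}$) says exactly that $j$ has $St^{\infty}$-dimension.

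More seriously, the second and third sentences of the Remark are not addressed at all. They concern an arbitrary stable inflator $s$ on $A$ and the partial totalizer $\mathfrak{t}(s^{\infty})$, not the operator $St$: (i) if $k$ is a pre-nucleus on $A$ with $\mu^{s^{\infty}}(k)=ks^{\infty}=\bar{d}$, then $k$ lies in the family $\mathcal{P}_{t}(s^{\infty})$ whose infimum defines $\mathfrak{t}(s^{\infty})$, so $\mathfrak{t}(s^{\infty})\leq k$; (ii) such a $k$ has $\mu^{s^{\infty}}$-dimension, since $\mu^{s^{\infty}}(k)=\bar{d}$ already forces $(\mu^{s^{\infty}})^{\infty}(k)=\bar{d}$; and (iii) conversely, any pre-nucleus $k$ with $\mathfrak{t}(s^{\infty})\leq k$ satisfies $\mu^{s^{\infty}}(k)=ks^{\infty}\geq \mathfrak{t}(s^{\infty})s^{\infty}=\bar{d}$, where the last equality uses item \ref{item7} of Section \ref{pre} (the infimum defining $\mathfrak{t}(s^{\infty})$ distributes over right multiplication by $s^{\infty}$), and then the chain $\mu^{s^{\infty}}\leq(\_)^{\infty}\circ\mu^{s^{\infty}}\leq\widehat{\mu^{s^{\infty}}}$ gives $\widehat{\mu^{s^{\infty}}}(k)=\bar{d}$, i.e. $k$ has $\widehat{\mu^{s^{\infty}}}$-dimension. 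These steps are where the section's main object, the partial totalizer $\mathfrak{t}$, actually does its work, so a proof of the Remark must contain them. To repair your proposal: cite Proposition \ref{i15} instead of re-proving it, add the one-line dimension deduction for the first sentence, and supply the arguments (i)--(iii).
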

 

\begin{thm}
\label{i16}
Let $A$ be an idiom. The following affirmations are equivalent for a stable inflator $s\in S(A)$ and $J\in S(S(A))$:
\begin{itemize}
\item[{\rm (1)}] $\Xi(\mu^{s^{\infty}})(J^{\infty})=Tp$.
\item[{\rm (2)}]  $\mathfrak{t}(\mu^{s^{\infty}})\leq J^{\infty}$.
\item[{\rm (3)}]  $s^{\infty}$ has $J$-dimension. 
\end{itemize}
Here $\Xi(\mu^{s^{\infty}})(K)=K\mu^{s^{\infty}}$ and $Tp$ is the top in $S(S(A))$.
\end{thm}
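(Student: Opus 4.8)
The plan is to peel off the layered notation and reduce all three conditions to pointwise statements about inflators on the idiom $S(A)$, after which the equivalences are forced by monotonicity together with a single totalizer identity. First I would record the ambient facts. Since $s$ is stable, $s^{\infty}$ is a nucleus by item 9 of Section~\ref{pre}, hence idempotent, so $\mu^{s^{\infty}}$ is a \emph{nucleus} on the idiom $S(A)$ (the text notes $\mu^{(\_)}$ sends stable inflators to pre-nuclei on $S(A)$, and $\mu^{k}$ is a nucleus when $k$ is idempotent); in particular $\mu^{s^{\infty}}\in S(S(A))$. Likewise $J\in S(S(A))$ gives, by item 9 applied to the idiom $S(A)$, a nucleus $J^{\infty}$ on $S(A)$, so $J^{\infty}\in S(S(A))$. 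The operator $\Xi(\mu^{s^{\infty}})$ is exactly right-multiplication by $\mu^{s^{\infty}}$, i.e. $\Xi(\mu^{s^{\infty}})=\mu^{\mu^{s^{\infty}}}$ inside $I(S(A))$, and $Tp$ is the constant top inflator of $S(A)$. Evaluating at an arbitrary $d\in S(A)$, condition (1) reads $J^{\infty}(d\,s^{\infty})=\bar{d}$ for all $d$, while condition (3) reads $J^{\infty}(s^{\infty})=\bar{d}$.

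For $(1)\Leftrightarrow(3)$ I would use evaluation at the identity inflator $d_{\underline{0}}$ and monotonicity of $J^{\infty}$. The implication $(1)\Rightarrow(3)$ is immediate upon setting $d=d_{\underline{0}}$, since $d_{\underline{0}}\,s^{\infty}=s^{\infty}$. For $(3)\Rightarrow(1)$, note $d\geq d_{\underline{0}}$ for every inflator $d$, so by item 5 of Section~\ref{pre} we have $d\,s^{\infty}\geq d_{\underline{0}}\,s^{\infty}=s^{\infty}$; applying the monotone map $J^{\infty}$ gives $J^{\infty}(d\,s^{\infty})\geq J^{\infty}(s^{\infty})=\bar{d}$, which forces equality with the top. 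This settles the equivalence with the dimension condition.

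For $(1)\Leftrightarrow(2)$ the key ingredient is the partial-totalizer identity $\mathfrak{t}(\mu^{s^{\infty}})\,\mu^{s^{\infty}}=Tp$, obtained from item~\ref{item7} of Section~\ref{pre} (distributivity of meets over right-multiplication) applied in $S(S(A))$: writing $\mathcal{S}_{t}(\mu^{s^{\infty}})$ for the defining family, which is non-empty as it contains $Tp$, one gets $\mathfrak{t}(\mu^{s^{\infty}})\mu^{s^{\infty}}=\bigwedge\{K\mu^{s^{\infty}}\mid K\in\mathcal{S}_{t}(\mu^{s^{\infty}})\}=Tp$, and $\mathfrak{t}(\mu^{s^{\infty}})\in S(S(A))$ because $S(S(A))$ is closed under meets. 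Then $(1)\Rightarrow(2)$ is simply the remark that condition (1) places the legitimate member $J^{\infty}$ inside $\mathcal{S}_{t}(\mu^{s^{\infty}})$, so the meet $\mathfrak{t}(\mu^{s^{\infty}})$ lies below it. Conversely, $(2)\Rightarrow(1)$ follows by right-multiplying $\mathfrak{t}(\mu^{s^{\infty}})\leq J^{\infty}$ by $\mu^{s^{\infty}}$ (item 5) and using the identity to sandwich: $Tp=\mathfrak{t}(\mu^{s^{\infty}})\mu^{s^{\infty}}\leq J^{\infty}\mu^{s^{\infty}}\leq Tp$.

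The main obstacle here is bookkeeping rather than computation: one must verify that every object sits in the intended lattice — that $\mu^{s^{\infty}}$ is genuinely a nucleus on $S(A)$ (so that $\mathfrak{t}$ and $\Xi$ are being computed inside $S(S(A))$), that $J^{\infty}$ is an admissible element of the family defining $\mathfrak{t}(\mu^{s^{\infty}})$, and that $Tp$ is consistently read as the top of $S(A)$ viewed as the top inflator of $I(S(A))$. Once these identifications are pinned down, the equivalences are forced purely by monotonicity and the totalizer identity, with no appeal to modularity or meet-continuity beyond what is already needed to know that $S(A)$ is an idiom.
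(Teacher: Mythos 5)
Your proposal is correct and follows essentially the same route as the paper: both rest on unfolding $\Xi$ and $\mathfrak{t}$, the identity $\mathfrak{t}(\mu^{s^{\infty}})\mu^{s^{\infty}}=Tp$ from item \ref{item7}, evaluation at $d_{\underline{0}}$, and monotonicity of $J^{\infty}$ (the paper chains $(1)\Leftrightarrow(2)$, $(2)\Leftrightarrow(3)$ while you prove $(1)\Leftrightarrow(3)$ and $(1)\Leftrightarrow(2)$, a purely cosmetic rearrangement). Your additional bookkeeping --- verifying that $\mu^{s^{\infty}}$ and $J^{\infty}$ genuinely live in $S(S(A))$ --- is a welcome tightening of details the paper leaves implicit, but it does not change the argument.
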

\begin{proof}

$(1)\Leftrightarrow (2)$:  
$\Xi(\mu^{s^{\infty}})(J)=tp\Leftrightarrow J\mu^{s^{\infty}}=tp\Leftrightarrow \mathfrak{t}(\mu^{s^{\infty}})\leq J$.

$(2)\Rightarrow (3)$: $J(s^{\infty})=\bar{d}$ by $(2)$ and  evaluation on $d_{\underline{0}}$.  Thus $J^{\infty}(s^{\infty})=\bar{d}$.

$(3)\Rightarrow (2)$: 
By (3) we have $J^{\infty}(s^{\infty})=tp$. Then, $J^{\infty}((\mu^{s^{\infty}})(d_{\underline{0}}))=tp$ and thus $J^{\infty}\mu^{s^{\infty}}=tp$.
\end{proof}

\section{Some final remarks}\label{sec:sec4}

In this section we set down some observations about totalizers and questions related to equalizers.

Recall that for every idiom $A$ we have two fundamental inflators the socle, $soc=|\EuScript{S}mp|$ and the Cantor-Bendixson derivative $cbd=|\EuScript{C}mp|$ and this two are compared $soc\leq cbd$ thus $t(cbd)\leq t(soc)$. The following is not entirely new, the first two equivalences are proved in \cite{23} and the third is just a consequence of our analysis.
\begin{lem}
\label{r1}

Let $A$ be an idiom then the next statements are equivalent.
\begin{itemize}

\item[1] $A$ is strongly atomic, that is, every interval of $A$, $[a,b]$ contains a sub-interval, $a\leq c<d\leq b $ such that there is some $c<z\leq d$ with $[c,z]\in \EuScript{S}mp(\EuScript{O})$

\item[2] $soc^{\infty}(\underline{0})=\bar{1}$.

\item[3] $t(soc^{\infty})=d_{\underline{0}}$.
\end{itemize}
\end{lem}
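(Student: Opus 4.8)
The plan is to treat the implications separately: the equivalence $1\Leftrightarrow 2$ is essentially the cited structural correspondence, while $2\Leftrightarrow 3$ is where the totalizer machinery of this paper enters. I would organize the whole argument around the description of $\EuScript{S}\EuScript{A}$ as a division set and around Theorem \ref{i3}.

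For $1\Leftrightarrow 2$, I would start from the fact recalled at the end of Section \ref{pre} (7.11 of \cite{27}): $\EuScript{S}\EuScript{A}$ is exactly the division set $\mathcal{D}_{soc^{\infty}}$ attached to the nucleus $soc^{\infty}$, so that $[a,b]\in\EuScript{S}\EuScript{A}\Leftrightarrow soc^{\infty}(a)=soc^{\infty}(b)$. Then $A$ being strongly atomic is precisely the statement $\EuScript{S}\EuScript{A}=\EuScript{I}(A)$, i.e. $soc^{\infty}$ collapses every interval, i.e. $soc^{\infty}=\bar{d}$. The one point to record is that, since $soc^{\infty}$ is a monotone nucleus, $soc^{\infty}=\bar{d}$ holds if and only if $soc^{\infty}(\underline{0})=\bar{1}$: the forward direction is immediate, and conversely $soc^{\infty}(\underline{0})=\bar{1}$ forces $soc^{\infty}(a)\geq soc^{\infty}(\underline{0})=\bar{1}$ for every $a\in A$. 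This yields $1\Leftrightarrow 2$.

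For $2\Leftrightarrow 3$, I would invoke Theorem \ref{i3} to write $t(soc^{\infty})=O_{soc^{\infty}(\underline{0})}$, and then observe that for any $b\in A$ one has $O_{b}=d_{\underline{0}}$ if and only if $b=\bar{1}$: indeed $O_{b}(b)=\bar{1}$ whereas $d_{\underline{0}}(b)=b$, so agreement at the argument $b$ already forces $b=\bar{1}$, while $O_{\bar{1}}=d_{\underline{0}}$ since $a\geq\bar{1}$ only for $a=\bar{1}$. Applying this with $b=soc^{\infty}(\underline{0})$ gives $t(soc^{\infty})=d_{\underline{0}}\Leftrightarrow soc^{\infty}(\underline{0})=\bar{1}$, which is exactly $2\Leftrightarrow 3$. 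This is also the instance $d=soc$ of the rephrasing of $d$-length noted in the remark just before the lemma.

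There is essentially no serious obstacle once the correspondence $\EuScript{S}\EuScript{A}=\mathcal{D}_{soc^{\infty}}$ is in hand; the only care needed is the monotonicity step upgrading $soc^{\infty}(\underline{0})=\bar{1}$ to $soc^{\infty}=\bar{d}$, and the elementary computation identifying when the inflator $O_{b}$ degenerates to the identity. Both are routine, so the real content of the lemma is the packaging of the strongly atomic condition into the single totalizer equation $t(soc^{\infty})=d_{\underline{0}}$.
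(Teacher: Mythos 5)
Your proposal is correct and follows exactly the route the paper intends: the paper gives no written proof, merely citing Simmons (\cite{23}) for the equivalence of 1 and 2 and noting that 3 is ``a consequence of our analysis,'' i.e.\ of Theorem~\ref{i3} and the $d$-length remark preceding the lemma. Your argument simply makes these steps explicit --- the identification $\EuScript{S}\EuScript{A}=\mathcal{D}_{soc^{\infty}}$ together with monotonicity for $1\Leftrightarrow 2$, and the computation that $O_{b}=d_{\underline{0}}$ iff $b=\bar{1}$ for $2\Leftrightarrow 3$ --- and both fillings are sound.
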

The above result is also related to Corollary \ref{i911}.

Now from the preliminaries we have that for every idiom $A$, $Gab\leq Soc$ and $Boy\leq Cbd$, and all this inflators are below $Cbd$, here $Soc$ and $Cbd$ are Socle inflator and the Cantor-Bendixson derivative on $N(A)$ and for the construction of $Boy$ see for instance \cite{28}. The proof of the following is straightforward.

\begin{prop}\label{r2}
The following comparison holds for every idiom $A$.
\begin{itemize} 

\item[1] $t(Soc^{\infty})\leq t(Soc)\leq t(Gab)$. 

\item[2] $t(Cbd^{\infty})\leq t(Cbd)\leq t(Boy)$.  

\item[3] $t(Boy)\leq t(Gab)$.

\item[4] $t(Cbd)\leq t(Soc)$ and $t(Cbd^{\infty})\leq t(Soc^{\infty})$.
\end{itemize}
\end{prop}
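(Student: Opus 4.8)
The plan is to reduce every inequality to the antitonicity of the totalizer, so the first step is to assemble the three formal facts that do all the work. First I would recall that, by Lemma~\ref{i2}(1), the totalizer is order-reversing: $d\leq d'$ implies $t(d')\leq t(d)$. Since $N(A)$ is a frame by Theorem~\ref{014}, and every frame is an idiom, all of the machinery of Section~\ref{sec:sec2} applies to inflators on $N(A)$, so this antitonicity is available for $Soc,Cbd,Gab,Boy\in I(N(A))$. Second, for any inflator $d$ the chain $d\leq d^{2}\leq\cdots$ gives $d\leq d^{\infty}$, hence $t(d^{\infty})\leq t(d)$. Third, the operator $(\_)^{\infty}$ is monotone (as noted in Section~\ref{pre}), so $d\leq d'$ forces $d^{\infty}\leq d'^{\infty}$.

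With these in hand, every comparison falls out of the square of fundamental inequalities recalled just before the statement (and established in \cite{28}), namely $Gab\leq Soc\leq Cbd$ and $Gab\leq Boy\leq Cbd$. The left-hand inequalities of (1) and (2) are the instances $t(Soc^{\infty})\leq t(Soc)$ and $t(Cbd^{\infty})\leq t(Cbd)$ of the second fact. Applying the antitone $t$ to each remaining edge of the square then finishes the rest: $Gab\leq Soc$ gives $t(Soc)\leq t(Gab)$ (completing (1)); $Boy\leq Cbd$ gives $t(Cbd)\leq t(Boy)$ (completing (2)); $Gab\leq Boy$ gives $t(Boy)\leq t(Gab)$, which is (3); and $Soc\leq Cbd$ gives $t(Cbd)\leq t(Soc)$, the first half of (4). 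For the second half of (4) I would chain the third and first facts: monotonicity of $(\_)^{\infty}$ turns $Soc\leq Cbd$ into $Soc^{\infty}\leq Cbd^{\infty}$, and antitonicity of $t$ then yields $t(Cbd^{\infty})\leq t(Soc^{\infty})$.

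I expect no real obstacle here: the argument is entirely formal once the inputs are lined up, which is why the author calls it straightforward. The two points that merely require care are keeping track of the reversed direction under $t$ (it is easy to write an inequality backwards), and, in part (4), remembering to pass through $(\_)^{\infty}$ \emph{before} applying $t$ rather than afterward. The one genuinely external ingredient is the comparison $Gab\leq Boy$ used for (3): unlike the others it is not produced by formal manipulation but is part of the structure theory of the four derivatives on $N(A)$ in \cite{28}. If one preferred, (3) could instead be obtained from Theorem~\ref{i3}, since $t(Boy)\leq t(Gab)$ is equivalent to $Gab(d_{\underline{0}})\leq Boy(d_{\underline{0}})$, reducing the claim to a comparison of the two derivatives at the bottom nucleus.
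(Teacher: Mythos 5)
Your proof is correct and is exactly what the paper intends: the paper omits the argument entirely (``The proof of the following is straightforward''), and the straightforward argument is precisely yours --- apply the antitonicity of $t$ from Lemma~\ref{i2}(1) (valid on $N(A)$ since it is a frame by Theorem~\ref{014}, hence an idiom), the comparison $d\leq d^{\infty}$, and the monotonicity of $(\_)^{\infty}$ to the comparisons among $Gab$, $Soc$, $Boy$, $Cbd$. Your observation that item 3 needs $Gab\leq Boy$, which is \emph{not} among the comparisons the paper lists just before the statement (it states only $Gab\leq Soc$, $Boy\leq Cbd$, and that all lie below $Cbd$) and must be imported from \cite{28}, is accurate and correctly flagged as the one non-formal ingredient.
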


\begin{obs}\label{obr1}
\begin{itemize}
\item[(1)] Suppose $t(Gab)=Id_{N(A)}$ then by $1$ of Proposition \ref{r2} we have $Gab=Soc=Soc^{\infty}=Tp$, that is, $N(A)$ have $Soc$-length, in particular $Gab(d_{\underline{0}})=\bar{d}$ and by Theorem 2.3 of \cite{28} we obtain that $Gab(d_{\underline{0}})=\bar{d}=soc^{\infty}$, that is to say $A$ has $soc$-length. In this case $3$ of the above proposition say $Gab=Boy$ which implies that $soc^{\infty}=cbd^{\infty}$.

\item[(2)] From $Gab\leq Soc$ we have $Gab(j)\leq Soc(j)$ for every nucleus, thus $t(Soc(j))\leq t(Gab(j))$ and $t(Gab^{\infty}(j))$. If $t(Gab(j))=d_{\underline{0}}$ then $Gab(j)=\bar{d}$ in particular $j$ has $Gab$-dimension and $soc^{\infty}_{j}=tp$, in this case $A_{j}$ is strongly atomic.
This situation also implies that $t(Soc(j))\leq t(soc^{\infty}_{j})\leq j_{*}t(soc_{A_{j}}^{\infty})j^{*}$ where the last comparison is for Proposition \ref{i9} so if $A_{j}$ is strongly atomic then $Soc(j)=tp$ in particular $t(Soc(d_{\underline{0}}))\leq t(soc^{\infty})\leq t(soc)$.
\end{itemize}
\end{obs}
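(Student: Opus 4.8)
The plan is to run both items off a single engine: the equivalence recorded just before Proposition~\ref{i5} that, for an inflator $d$ on any idiom, $t(d)=d_{\underline{0}}$ holds \emph{exactly} when $d=\bar{d}$ (since $t(d)=d_{\underline{0}}$ forces $d=d_{\underline{0}}d=\bar{d}$). Everything below is then an application of this ``totalizer is bottom $\Leftrightarrow$ inflator is top'' principle together with the antitonicity of $t$ (Lemma~\ref{i2}(1)) and the chains of Proposition~\ref{r2}, all read inside $I(N(A))$. For item~(1) I would start from the hypothesis $t(Gab)=Id_{N(A)}$, i.e.\ $t(Gab)$ is the bottom inflator of $I(N(A))$. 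Feeding this into $t(Soc^{\infty})\le t(Soc)\le t(Gab)$ of Proposition~\ref{r2}(1) collapses all three totalizers to the bottom, and the engine upgrades each to the top, giving $Gab=Soc=Soc^{\infty}=Tp$. Reading $Soc^{\infty}=Tp$ as $Soc^{\infty}(d_{\underline{0}})=\bar{d}$ is precisely $Soc$-length for $N(A)$, and $Gab=Tp$ gives $Gab(d_{\underline{0}})=\bar{d}$. To descend to $A$ itself I would invoke Theorem~2.3 of \cite{28}, which identifies $Gab(d_{\underline{0}})$ with the closure $soc^{\infty}$ of the socle derivative on $A$; hence $soc^{\infty}=\bar{d}$, i.e.\ $soc^{\infty}(\underline{0})=\bar{1}$, which is $soc$-length for $A$. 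Finally Proposition~\ref{r2}(3) yields $t(Boy)\le t(Gab)=d_{\underline{0}}$, so $Boy=Tp=Gab$, and the same base-frame correspondence applied to $Boy$ gives $cbd^{\infty}=\bar{d}=soc^{\infty}$.

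For item~(2) I would evaluate the inequality $Gab\le Soc$ of inflators on $N(A)$ at a fixed nucleus $j$ to obtain $Gab(j)\le Soc(j)$ in $N(A)$; since nuclei are in particular inflators on $A$, antitonicity (Lemma~\ref{i2}(1)) gives $t(Soc(j))\le t(Gab(j))$. Under the hypothesis $t(Gab(j))=d_{\underline{0}}$ the engine forces $Gab(j)=\bar{d}$, whence $Gab^{\infty}(j)=\bar{d}$, i.e.\ $j$ has $Gab$-dimension; transporting this through the correspondence that reads $Gab(j)$ as the socle closure of the quotient gives $soc^{\infty}_{j}=tp$, which by Lemma~\ref{r1} is exactly strong atomicity of $A_{j}$. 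For the displayed chain I would apply Proposition~\ref{i9} with $d^{A_{j}}=soc^{\infty}_{A_{j}}$ to get $t(soc^{\infty}_{j})\le j_{*}t(soc^{\infty}_{A_{j}})j^{*}$, prepended with $t(Soc(j))\le t(soc^{\infty}_{j})$ from $soc^{\infty}_{j}\le Soc(j)$. Then, when $A_{j}$ is strongly atomic, Lemma~\ref{r1} collapses $soc^{\infty}_{A_{j}}$ to the constant top on $A_{j}$, so $soc^{\infty}_{j}=tp$ and $t(soc^{\infty}_{j})=d_{\underline{0}}$; the chain forces $t(Soc(j))=d_{\underline{0}}$, i.e.\ $Soc(j)=tp$. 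The final assertion is the special case $j=d_{\underline{0}}$, where $A_{d_{\underline{0}}}=A$ and $soc^{\infty}_{d_{\underline{0}}}=soc^{\infty}$, yielding $t(Soc(d_{\underline{0}}))\le t(soc^{\infty})\le t(soc)$, the last step again by antitonicity from $soc\le soc^{\infty}$.

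The step I expect to be the real obstacle is the bookkeeping across the three levels---inflators on $A$ (the derivatives $soc,cbd$), stable inflators on $N(A)$ ($Soc,Cbd,Gab,Boy$), and the socle of the quotient $A_{j}$---together with the precise content of the cited correspondence (Theorem~2.3 of \cite{28}) that licenses identifying a ``top'' conclusion about $Gab$ on $N(A)$ with a length or dimension statement about $soc^{\infty}$ on $A$ or on $A_{j}$. In particular one must check carefully that $j_{*}(\_)j^{*}$ carries the socle closure of $A_{j}$ to the expected nucleus on $A$, so that strong atomicity of $A_{j}$ genuinely collapses the middle term of the displayed chain; the purely order-theoretic manipulations (antitonicity and the ``totalizer-bottom $\Leftrightarrow$ top'' engine) are routine once this dictionary is pinned down.
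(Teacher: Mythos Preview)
Your proposal is correct and follows essentially the same route as the paper. The remark itself carries its justification inline rather than in a separate proof, and your ``engine'' $t(d)=d_{\underline{0}}\Leftrightarrow d=\bar{d}$ (recorded just before Proposition~\ref{i5}) together with antitonicity of $t$ and the chains of Proposition~\ref{r2} is exactly what the paper is invoking; your identification of Theorem~2.3 of \cite{28} as the bridge between $Gab(d_{\underline{0}})$ on $N(A)$ and $soc^{\infty}$ on $A$, and of Proposition~\ref{i9} as the source of the last comparison in the displayed chain, matches the paper's citations.
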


It is important to mention that this kind of analysis can be applied to the (big) idiom $R$-pr of all preradicals over a module category $R$-$\Mod$, in particular this can be applied to the idiom $R$-$\lep$ of all left exact preradicals, note also that for every left exact preradical $\tau$ we can consider the next function $(\tau:\_):R\text{-}\lep\rightarrow R\text{-}\lep$ and from \cite{16} $(\tau:\_)$ is a pre-nucleus then $t(t((\tau:\_))(\tau))\leq t(\tau)\leq t((\tau:\_))(t(\tau))$ where $t(\tau)$ is the totalizer of $\tau$ in the sense of \cite{17}.

As the reader may notice, we do not have an analogue treatment like in the case of totalizers for equalizers thus further investigation is needed, but so far we can set down some properties of equalizers that give some lines for uses of these and insights to look at it.

\begin{dfn}\label{e1}
An inflator $d\neq \bar{d}$ on an idiom $A$ is $\wedge$\emph{-prime} if $k\wedge l=d$ then $k=d$ or $l=d$, for inflators $k$ and $l$.
\end{dfn}

From this we have that every $\wedge$-irreducible inflator is $\wedge$-prime.

\begin{lem}\label{e2}
If $d$ is an idempotent inflator and $\wedge$-prime then, $d$ is $\wedge$-irreducible.
\end{lem}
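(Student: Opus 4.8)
The plan is to show that idempotence of $d$ collapses the gap between the two notions, using only the monotonicity of the product and the \emph{exact} left meet-distributivity recorded in item \ref{item7} of Section \ref{pre}. First I would read $\wedge$-irreducibility in its prime form, as is done for points of $N(A)$: $d$ is $\wedge$-irreducible exactly when $k\wedge l\le d$ implies $k\le d$ or $l\le d$. So, fixing inflators $k,l$ with $k\wedge l\le d$, the goal becomes $k\le d$ or $l\le d$.

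The key preliminary observation is that, because $d$ is idempotent, for any inflator $m$ one has $m\le d\Leftrightarrow md=d$. Indeed $m\vee d\le md$ gives $d\le md$ unconditionally, while if $m\le d$ then monotonicity of the product yields $md\le d^{2}=d$, forcing $md=d$; conversely $m\le md=d$. Thus the relation $m\le d$ is detected by a single product equation, and this is what lets me import the $\wedge$-prime hypothesis.

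Next I would multiply the hypothesis by $d$ on the right. Applying the left meet-distributivity $(\bigwedge\mathcal{D})d=\bigwedge\{md\mid m\in\mathcal{D}\}$ (item \ref{item7}) with $\mathcal{D}=\{k,l\}$ gives $kd\wedge ld=(k\wedge l)d$. Since $k\wedge l\le d$ and $d$ is idempotent, monotonicity gives $(k\wedge l)d\le d^{2}=d$, while $(k\wedge l)d\ge d_{\underline 0}d=d$; hence $(k\wedge l)d=d$ and therefore $kd\wedge ld=d$. Because $d$ is $\wedge$-prime, the equality $d=kd\wedge ld$ forces $kd=d$ or $ld=d$, and by the preliminary observation this says $k\le d$ or $l\le d$, which is precisely $\wedge$-irreducibility.

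The only delicate point is choosing the correct side on which to multiply. Item \ref{item7} supplies an exact distributivity of the product over meets only when the meet sits in the \emph{left} factor; on the right one has merely $k(d'\wedge d)\le kd'\wedge kd$, which would give just an inequality. Multiplying $d$ on the right of $k\wedge l$ is exactly what converts the hypothesis $k\wedge l\le d$ into the clean identity $kd\wedge ld=d$ to which $\wedge$-primeness applies; everything else is the routine translation between $m\le d$ and $md=d$ afforded by idempotence, so I expect no further obstacle.
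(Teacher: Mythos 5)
Your proof is correct and is essentially the paper's own argument: both right-multiply the hypothesis $k\wedge l\leq d$ by $d$, use the exact left meet-distributivity $(k\wedge l)d=kd\wedge ld$ from item \ref{item7} together with idempotence to squeeze this product down to $d$, and then invoke $\wedge$-primeness. The only cosmetic difference is that the paper applies primeness to the joins, showing $(k\vee d)\wedge(l\vee d)=d$ and concluding $k\vee d=d$ or $l\vee d=d$, whereas you apply it to the products $kd\wedge ld=d$ and translate back through the equivalence $md=d\Leftrightarrow m\leq d$ afforded by idempotence.
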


\begin{proof}
Let be $k_{1}, k_{2}\in D(A)$ such that $k_{1}\wedge k_{2}\leq d$, set $z_{1}=k_{1}\vee d$ y $z_{2}=k_{2}\vee d$ thus $d\leq z_{1}\wedge z_{2}=(k_{1}\vee d)\wedge (k_{2}\vee d)\leq (k_{1}d)\wedge (k_{2}d)=(k_{1}\wedge k_{2})d$. Therefore, since $d$ is idempotent $d=z_{1}\wedge z_{2}$. From the fact that $d$ is $\wedge$-prime we have the result.
\end{proof}

\begin{prop}\label{e3}
Suppose that $d$ is $\wedge$-prime then $e(d)$ is $\wedge$-prime.
\end{prop}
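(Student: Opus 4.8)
The plan is to transport the factorization of $e(d)$ upward along right-multiplication by $d$, thereby converting a meet decomposition of $e(d)$ into one of $d$, where the hypothesis can be applied. First I would record the tools: by Lemma \ref{i2}(5)--(6) the inflator $e(d)$ is idempotent and satisfies $e(d)\leq d$, while by Definition \ref{i1} we have $e(d)d=d$ and $e(d)=\bigvee\mathcal{I}_{e}(d)$. I would also verify that $e(d)\neq\bar{d}$, which is required for $\wedge$-primeness to even make sense: since $d$ is $\wedge$-prime we have $d\neq\bar{d}$, and $e(d)=\bar{d}$ together with $e(d)\leq d$ would force $d=\bar{d}$, a contradiction. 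Thus $e(d)$ is a legitimate candidate.

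For the main step, suppose $k\wedge l=e(d)$ for inflators $k,l$. Multiplying on the right by $d$ and using item \ref{item7} of Section \ref{pre} (which gives the right-distributivity $(k\wedge l)d=kd\wedge ld$) together with $e(d)d=d$, I obtain $kd\wedge ld=d$. Now the hypothesis that $d$ is $\wedge$-prime applies directly to the two inflators $kd$ and $ld$, yielding $kd=d$ or $ld=d$.

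To finish, suppose without loss of generality that $kd=d$. Then $k\in\mathcal{I}_{e}(d)$, so $k\leq\bigvee\mathcal{I}_{e}(d)=e(d)$; on the other hand $e(d)=k\wedge l\leq k$, whence $k=e(d)$. Symmetrically, $ld=d$ gives $l=e(d)$, which is exactly the $\wedge$-prime condition for $e(d)$. I expect no serious obstacle: the argument is short, and the only delicate point is that the product of inflators distributes over meets only when one multiplies on the correct (right) side, so I would take care to right-multiply by $d$ and invoke precisely the meet half of item \ref{item7}.
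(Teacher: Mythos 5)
Your proof is correct and follows essentially the same route as the paper: right-multiply the meet decomposition by $d$, use the pointwise identity $(k\wedge l)d=kd\wedge ld$ from item \ref{item7} together with $e(d)d=d$, apply the $\wedge$-primeness of $d$ to $kd$ and $ld$, and pull the conclusion back via $kd=d\Rightarrow k\in\mathcal{I}_{e}(d)\Rightarrow k\leq e(d)$. The only cosmetic difference is that the paper runs the argument starting from $d_{1}\wedge d_{2}\leq e(d)$ and concludes $d_{1}\leq e(d)$ or $d_{2}\leq e(d)$ (a formally stronger, order-theoretic version), whereas you work with equality, which is exactly what Definition \ref{e1} demands.
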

\begin{proof}
Note that being $d$ $\wedge$-prime then $e(d)\neq\bar{d}$, suppose $d_{1}\wedge d_{2}\leq e(d)$ then $(d_{1}\wedge d_{2})d=d$, thus $d_{1}d=d$ or $d_{2}d=d$, that is, $d_{1}\leq e(d)$ or $d_{2}\leq e(d)$.
\end{proof}

By Lemma \ref{i2} we know that for an inflator $d$ on an idiom $A$ its equalizer $e(d)$ is always idempotent, $e(d)\leq d$; and $e(d)=d$ if and only if $d$ is idempotent.

\begin{dfn}\label{e4}
Let $d$ be an inflator on an idiom $A$, the \emph{idempotent interval} of $d$ is \[[e(d),d^{\infty}]\]
where $d^{\infty}$ is the least idempotent above $d$.
\end{dfn}

This interval is trivial if and only if $d$ is idempotent.Recall that an element $a\neq{e}\in [a,b]$ in an interval, is \emph{large or essential} if $e\wedge y=a \Rightarrow y=a$ for all $a\leq y\leq b$. 

\begin{lem}\label{e5}
Let $d$ be a stable inflator on  $A$, then $d$ is an essential element of $[e(d),d^{\infty}]$.
This interval is considered in the set of stable inflators
\end{lem}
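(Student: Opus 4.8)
The claim is that for a stable inflator $d$ on $A$, the inflator $d$ is an essential (large) element of the idempotent interval $[e(d),d^{\infty}]$, where this interval is understood inside the lattice $S(A)$ of stable inflators. By the definition of essential element recalled just before the lemma, I must show: for every stable inflator $y$ with $e(d)\leq y\leq d^{\infty}$, the condition $d\wedge y=e(d)$ forces $y=e(d)$. So the plan is to fix such a $y$, assume $d\wedge y=e(d)$, and derive $y=e(d)$.

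\emph{First step: locate the candidate identities.} The key facts I expect to use are those relating the product of inflators to the supremum, together with Lemma~\ref{i2}: namely $e(d)\leq d$, $e(d)$ is idempotent, $e(d)d=d$, and $e(d)=d$ iff $d$ is idempotent. I would first record the two-sided behavior of the product against $d$ inside the interval. Since $e(d)\leq y$ and products dominate suprema (item (5) and the comparison $d'\vee d\le d'd$ from Section~\ref{pre}), I have $e(d)d \le yd$, i.e.\ $d\le yd$; and since $y\le d^{\infty}$, monotonicity of the product gives $yd \le d^{\infty}d = d^{\infty}$. The plan is to use stability of $d$ to pin down $yd$ more precisely and to transfer the meet hypothesis $d\wedge y=e(d)$ into a statement about products.

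\emph{Second step: the multiplicative trick.} The natural move, mirroring the proof of Lemma~\ref{e2}, is to multiply the meet inequality on the right by $d$ and exploit that $d$ and $y$ interact multiplicatively. From $d\wedge y=e(d)$ I would apply item (6) of Section~\ref{pre}, $k(d'\wedge d)\le kd'\wedge kd$, or rather the right-multiplication version via item~\ref{item7}: $(d\wedge y)d = dd\wedge yd$ since right multiplication by a fixed inflator distributes over infima. Hence $dd \wedge yd = e(d)d = d$. Now I want to leverage $y\le d^{\infty}$ to compare $yd$ with $d$ and conclude $y\le e(d)$, which combined with $e(d)\le y$ yields $y=e(d)$. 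The precise bookkeeping here — showing that $dd\wedge yd=d$ together with stability forces $yd=d$, hence $y\in\mathcal{I}_{e}(d)$ so $y\le e(d)$ — is where stability of $d$ (the hypothesis $d(x)\wedge a\le d(x\wedge a)$) must be invoked to guarantee the meet computation behaves correctly at the level of stable inflators and that the intersection stays inside $S(A)$.

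\emph{Main obstacle.} The delicate point is justifying that $yd=d$ actually follows, rather than merely $yd\le d^{\infty}$ with the meet condition. The passage from $dd\wedge yd=d$ to $yd=d$ is not formal: it requires that $d\le yd$ (which I have) be upgraded to equality, and this is exactly where I expect the essentiality to bite — one must rule out that $y$ contributes ``extra'' above $e(d)$ that survives after multiplication by $d$. I anticipate the argument closes by observing that $yd=d$ means $y\in\mathcal{I}_{e}(d)$, whence $y\le e(d)$ by the definition of $e(d)=\bigvee\mathcal{I}_{e}(d)$ and the supremum/product comparison in Lemma~\ref{i2}(6). The restriction to $S(A)$ is essential precisely because, for a general inflator, right-distributivity of $\wedge$ over products and the closure of the relevant meets need not hold; stability is the hypothesis that repairs this, and verifying that every inflator in sight ($y$, $yd$, $d\wedge y$) remains stable — using items (1)–(3) of Section~\ref{pre} — will be the technical heart of the write-up.
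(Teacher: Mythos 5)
Your setup is fine (fix a stable $y$ with $e(d)\leq y\leq d^{\infty}$ and $d\wedge y=e(d)$, and reduce to showing $y\leq e(d)$), but the mechanism you propose does not close, and you essentially admit this yourself. From $(d\wedge y)d=dd\wedge yd=e(d)d=d$ there is no formal way to deduce $yd=d$: both $dd$ and $yd$ lie in the interval $[d,d^{\infty}]$, and two elements of an interval can meet at its bottom without either one being the bottom. Worse, $yd=d$ is essentially the conclusion you are trying to prove (a posteriori $y=e(d)$ gives $yd=e(d)d=d$), so invoking ``where the essentiality bites'' to bridge this step is circular. Right-multiplication by $d$ also throws away the one feature of the hypothesis that can actually be exploited: the right-hand side $e(d)$ of the meet equation is \emph{idempotent} (Lemma \ref{i2}(5)); after multiplying, the right-hand side becomes $d$, which need not be, and no analogous trick applies.

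The paper's proof uses precisely that feature, through the closure operator $(\_)^{\infty}$ rather than through the product. Apply $(\_)^{\infty}$ to the equation $d\wedge y=e(d)$: on $S(A)$ this operator is compatible with the meet (this is where stability of the data enters --- the paper phrases it as ``$(\_)^{\infty}$ is stable''), so one gets $e(d)=e(d)^{\infty}=(d\wedge y)^{\infty}=d^{\infty}\wedge y^{\infty}$, the first equality holding because $e(d)$ is idempotent. Since $y\leq d^{\infty}$ gives $y^{\infty}\leq(d^{\infty})^{\infty}=d^{\infty}$, the right-hand side is just $y^{\infty}$, whence $e(d)\leq y\leq y^{\infty}=e(d)$ and $y=e(d)$. (Indeed only the inequality $d^{\infty}\wedge y\leq(d\wedge y)^{\infty}$ furnished by stability of $(\_)^{\infty}$ is needed.) So the missing idea in your attempt is not a refinement of the product computation but a different operator altogether: hit the meet equation with $(\_)^{\infty}$, and use idempotence of $e(d)$ together with the constraint $y\leq d^{\infty}$ coming from the ambient interval.
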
 

\begin{proof}
Let $k$ be a stable inflator such that $e(d)\leq k\leq d^{\infty}$ and $d\wedge k=e(d)$ in this case the inflator $(\_)^{\infty}$ is stable then $e(d)=(d\wedge k)^{\infty}=d^{\infty}\wedge k^{\infty}=k^{\infty}$ thus $e(d)\leq k\leq k^{\infty}=e(d)$, that is, $e(d)=k$.
\end{proof}

\end{document}